  \crefname{theorem}{Theorem}{Theorems}
  \crefname{thm}{Theorem}{Theorems}
  \crefname{lemma}{Lemma}{Lemmas}
  \crefname{lem}{Lemma}{Lemmas}
  \crefname{remark}{Remark}{Remarks}
  \crefname{prop}{Proposition}{Propositions}
  \crefname{proposition}{Proposition}{Propositions}
\crefname{notation}{Notation}{Notations}
\crefname{claim}{Claim}{Claims}
  \crefname{defn}{Definition}{Definitions}
  \crefname{corollary}{Corollary}{Corollaries}
  \crefname{section}{Section}{Sections}
  \crefname{figure}{Figure}{Figures}
  \crefname{question}{Question}{Questions}
  \crefname{exercise}{Exercise}{Exercises}
    \crefname{assumption}{Assumption}{Assumptions}
\newtheorem{thm}{Theorem}[section]
\newtheorem{conjecture}[thm]{Conjecture}
\newtheorem{claim}[thm]{Claim}
\newtheorem{lemma}[thm]{Lemma}
\newtheorem{corollary}[thm]{Corollary}
\newtheorem{proposition}[thm]{Proposition}
\newtheorem{question}[thm]{Question}
\numberwithin{equation}{section}
\theoremstyle{definition}
\newtheorem{remark}[thm]{Remark}
\newtheorem{example}[thm]{Example}
\def \min {\text{min}}
\def\cS{\mathcal{S}}
\def\cR{\mathcal{R}}
\def\cO{\mathcal{O}}
\def\cL{\mathcal{L}}
\def\cI{\mathcal{I}}
\def\cH{\mathcal{H}}
\def\cG{\mathcal{G}}
\def\cF{\mathcal{F}}
\def\cE{\mathcal{E}}
\def\cC{\mathcal{C}}
\def\cB{\mathcal{B}}
\def\cA{\mathcal{A}}
\def \ve {\varepsilon}
\def\P{\mathbb{P}}
\def\E{\mathbb{E}}
\def\R{\mathbb{R}}
\def\Z{\mathbb{Z}}
\def\N{\mathbb{N}}
\def\R{\mathbb{R}}
\def\1{\mathbf{1}}
\def  \p- {p\textunderscore}
\DeclareMathOperator{\w} {\mathsf{w}}
\def \cGm {\cG^{\bullet}_{\mathsf m}}
\def \Psic {\Psi_{\texttt{contr}}}
\def \Psiuc {\Psi_{\texttt{ucontr}}}
\DeclareMathOperator{\Tu}{T_\textsf{USpA}}
\date{}
\title{Minimal spanning arborescence}
\author{Gourab Ray \thanks{University of Victoria,  Research of G. Ray supported by NSERC 50311-57400} \and Arnab Sen \thanks{University of Minnesota, Research of A. Sen partially supported by Simon Foundation MP-TSM-00002716} }
\begin{document}
\maketitle
\begin{abstract}
We study the minimal spanning arborescence which is the directed analogue of the minimal spanning tree, with a particular focus on its infinite volume limit and its geometric properties. We prove that in a certain large class of transient trees, the infinite volume limit exists almost surely. We also prove that for nonamenable, unimodular graphs, the limit is almost surely one-ended assuming a certain sufficient condition that guarantees the existence of the limit.

This object cannot be studied using well-known algorithms, such as Kruskal's or Prim's algorithm,  to sample the minimal spanning tree  which has been instrumental in getting analogous results about them (Lyons, Peres, and Schramm \cite{lyons2006minimal}). Instead, we use a recursive algorithm due to Chu, Liu, Edmonds, and Bock \cite{Edmonds, Chu-Liu, bock}, which leads to a novel stochastic process which we call the \emph{loop contracting random walk}. This is similar to the well-known and widely studied loop erased random walk \cite{lawler2018topics}, except instead of erasing loops we contract them. The full algorithm bears similarities with the celebrated Wilson's algorithm to generate uniform spanning trees and can be seen as a certain limit of the original Wilson's algorithm.
\end{abstract}


\section{Introduction}\label{sec:intro}

A minimum weighted directed spanning tree {with a fixed root} on a weighted directed graph, or the \textbf{minimal spanning arborescence} is a fundamental object in theoretical computer science. 
It has widespread applications in various topics which include disease outbreaks \cite{jombart2011reconstructing}, approximating the traveling salesman problem \cite{salii2021improving}, wireless networks \cite{li2004topology}, natural language processing  \cite{mcdonald2005non,wan2009improving}, genetics \cite{horns2016lineage} and social networks \cite{zehnder2014towards}. Finding an efficient algorithm to sample the minimal spanning arborescence in a network 
has various elegant solutions, the major ones are attributed to Edmonds \cite{Edmonds}, Chu-Liu \cite{Chu-Liu} and Bock \cite{bock}, (see \cite{bother2023efficiently} for a modern exposition on the history of the algorithmic side of the problem).

The undirected version of the problem, or the \textbf{minimum spanning tree (MST)} problem is of similar fundamental importance with various celebrated algorithms to sample it, namely Kruskal's \cite{kruskal1956shortest} and Prim's \cite{prim1957shortest} algorithms. The study of the MST on graphs with i.i.d.\ random edge weights has also received a lot of attention in the probability and statistical mechanics community where the inherent elegance and simplicity of the above algorithms has played a pivotal role. 
Indeed, various intriguing results about the geometry of the minimum spanning tree can be found in recent literature in this area. For example, the local weak limits of the MST in various infinite graphs were studied by Alexander \cite{AKS_93}, Lyons, Peres, and Schramm \cite{lyons2006minimal} and, the scaling limits in the complete graph were studied in \cite{addario_minimal} and that in the plane in \cite{garban2018scaling}. This list is however far from complete; and we refer the reader to these papers for various further references. 

Despite the popularity of its cousin, the MST, the minimum spanning arborescence on directed graphs with random edge weights has not received as much attention from the probability community. It is the purpose of this paper to rectify this and initiate an investigation of this object from a statistical mechanics perspective.  

\subsection{Definitions and Main results}\label{subsec:intro}
 Take a finite, connected undirected graph $G  = (V, E)$. We fix a distinguished vertex $\partial$ in $G$ which we call the \textbf{boundary.} Let $\vec E$ denote the collection of oriented edges where each element of $E$ is assigned two of its possible orientations. For $\vec e \in \vec E$, let $\vec e_+$ denote the head of $\vec e$ and let $\vec e_-$ denote the tail of $\vec e$. An edge $\vec e$ is an \emph{outgoing edge} from a vertex $v$ if $\vec e_-  = v$. 
 An \textbf{arborescence} is a subset of the oriented edges of $\vec E$ such that every vertex has at most one outgoing edge and there are no cycles. For any vertex $v$, one can naturally define a path $(\vec e_0,\vec e_1,\ldots, \vec e_k)$ in the arborescence where $(\vec e_0)_- = v$, $(\vec e_i)_+ = (\vec e_{i+1})_-$ for $0 \le i \le k-1$ and $(\vec e_k)_+$ has no outgoing edge from it. This path is called the \textbf{future} of $v$ (note that the future of a vertex could be empty if it is isolated).
 We say $u,v$ are in the same component of the arborescence if their futures merge. 
 The set of vertices which has no outgoing edge is called the boundary of the arborescence. Each component of the arborescence has a unique boundary vertex, and the futures of all vertices in the component merge into that vertex. When we fix a set of boundary vertices $\partial$, then an arborescence with boundary $\partial$ is called the 
\textbf{spanning arborescence} of $(G, \partial)$. Alternatively, a spanning arborescence of $(G, \partial)$ is the collection of oriented edges of $\vec E$ such that every vertex except $\partial $ has exactly one outgoing edge and there are no cycles.

We assign i.i.d.\ nonnegative weights $(U_{\vec e})_{{\vec e} \in \vec E}$ coming from a continuous distribution, e.g., Exp$(1)$ or Uniform $[0,1]$ to the oriented edges of $G$. A \textbf{minimal spanning arborescence of $(G, \partial)$ 
(MSA)} is the (a.s.\ unique) spanning arborescence with minimal weight. Usually, $\partial $ will be taken to be a fixed distinguished vertex in $G$

 If the weight of both orientations of an edge is the same for every edge, then the MSA is just the minimal spanning tree, which has been widely studied (see \cite{lyons2006minimal,penrose2003random,schrijver2005history,addario_minimal,ABS_minimal} and references therein). However, in the general case, none of the main tools used to study the MST, viz, Prim's or Kruskal's algorithm and invasion percolation is applicable for the MSA. For example, it turns out that the distribution of the tree depends on the distribution of the weights chosen (see \cref{sec:basic}) unlike the MST. 
 To the best of our knowledge, this model has not been studied from a probabilistic perspective. 
 Our goal in this article is to examine the question of the existence of weak limits and the geometry of the MSA on infinite graphs.

Let $G = (V,E)$ be an infinite, connected, locally finite graph, possibly containing multiple edges, but no self-loops and let $(U_{\vec e})_{\vec e \in \vec E}$ be a collection of nonnegative i.i.d.\ weights coming from a continuous distribution. Let $G_1 \subset G_2 \subset \ldots$ be a sequence of subgraphs of vertices of $G$ which exhaust $G$, i.e., $\cup_{i \ge 1} G_i = G$. Let $G^{\w}_i$ be the graph obtained by identifying all the vertices in the complement of $G_i$ into a single vertex $\partial_i$ and erasing all the self-loops. Let $T_i$  be the MSA of $(G_i^{\w}, \partial_i)$ with weights coming from $(U_{\vec e})_{\vec e \in \vec E}$. We say that the $T_i$ converges a.s.\ if, for every $v \in G$,  almost surely, the outgoing edge from $v$ in $T_i$ remains the same for sufficiently large $i$. If this limit does not depend (almost surely) on the choice of the exhaustion, we say that the \textbf{wired boundary MSA limit} exists almost surely in $G$.

A \emph{subdivision} of a graph is obtained by replacing each edge of it by a path of finite length. A \emph{subdivision} is $M$-\emph{bounded} if each edge is replaced by a path whose length is bounded by $M$. A \emph{subdivision} is just called \emph{bounded} if it is $M$-bounded for some $M$.

Our first result establishes the existence of the wired MSA for a class of infinite transient trees with i.i.d.\ exponential weights.
\begin{thm}\label{thm:convergence}
Let $T$ be an infinite undirected tree that is transient for the simple random walk and which has no vertex of degree 2.  Let $T' = (V,E)$ be a bounded subdivision of $T$, and suppose the weights of the oriented edges in $\vec E$ are i.i.d.\ Exponential $(1)$. Then the wired MSA limit in $T'$ exists almost surely. 
\end{thm}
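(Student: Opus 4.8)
The plan is to represent the MSA through the loop contracting random walk (LCRW) and the associated Wilson-type algorithm, and then to show that on a bounded subdivision of a transient tree without degree-$2$ vertices the LCRW is transient (over the randomness of the weights), which forces the future of every vertex to stabilize.

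First, I would recall from the description of the Chu--Liu--Edmonds--Bock algorithm (cf.\ \cref{sec:basic}) that the future of a vertex $v$ in the MSA $T_i$ of $(G_i^{\w},\partial_i)$ is exactly the trace of the LCRW started at $v$ and run until it is absorbed at $\partial_i$. Thus the wired MSA limit exists almost surely precisely when, for each $v\in V$, the LCRW from $v$ on the infinite graph $T'$ escapes to infinity almost surely and its initial outgoing edge is eventually insensitive to the location of $\partial_i$; once this is shown, the trace of the infinite LCRW furnishes the limiting future and manifestly does not depend on the exhaustion.

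Second, I would exploit the tree structure to understand the LCRW. On a tree the contracted super-nodes are connected subtrees (``blobs''): at each step the walk either advances along a tree edge to a fresh vertex, or, when its minimal outgoing edge points back into the current path, it absorbs the enclosed vertices into the blob and continues from the blob with the Chu--Liu--Edmonds adjusted weights. I would track the walk only at the branch points of $T$ (the vertices of degree $\ge 3$); between two consecutive branch points it must cross a subdivided edge, which by hypothesis is a path of length at most $M$. The goal is to show that the sequence of branch points together with the chosen exit directions forms a process that can be coupled with, or dominated by, a genuine random walk on $T$.

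The crux, and the main obstacle, is the third step: turning transience of $T$ into transience of the LCRW despite the weight adjustments created by the contractions. Here the two structural hypotheses do the work. Boundedness of the subdivision guarantees that each corridor between branch points is crossed in a controlled (bounded-length) number of effective steps, so that the LCRW and the simple random walk on $T$ are comparable at the level of branch points; and the absence of degree-$2$ vertices in $T$ rules out infinite degree-$2$ corridors, which are exactly the configurations where a blob could grow indefinitely without its frontier escaping, creating a recurrent-type trap. Combining these with a network/effective-resistance comparison (bounded subdivision preserves transience), I would derive a quantitative escape estimate showing that the blob containing $v$ almost surely ceases to grow back toward $v$ and that the frontier drifts into a single end. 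Finally, transience of the LCRW implies that the outgoing edge from $v$ is determined after finitely many steps and thereafter stays fixed; for $i$ large it coincides with the infinite-volume value, which is independent of the exhaustion, yielding the almost sure existence of the wired MSA limit.
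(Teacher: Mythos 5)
Your high-level architecture matches the paper's: reduce existence of the wired MSA limit to almost-sure transience of the LCRW (this is \cref{prop:necessary_cond} together with \cref{lem:CLEB_LCRW}), and prove that transience by comparison with the simple random walk using effective-resistance monotonicity. However, the step you yourself call ``the crux, and the main obstacle'' is exactly where your proposal stops being a proof: you announce that you ``would derive a quantitative escape estimate'' by comparing the LCRW and the SRW ``at the level of branch points,'' but no such coupling or domination is constructed, and it is not clear one exists at the level of trajectories. The paper does something different and more delicate here. First, it proves by induction on the size of a finite tree that the \emph{escape probability} of the LCRW (hitting the glued boundary before returning to the start) dominates that of the SRW (\cref{prop:hitting_prop}); the induction works because contracting a cycle can only decrease effective resistance (\cref{lem:contraction_resistance}), so the already-contracted blobs only help the walk escape. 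This is a comparison of hitting probabilities, not of paths, and it is the technical heart of \cref{thm:LCRW_transient}.

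Second, your proposal does not address the renewal structure that replaces the missing $0$--$1$ law. Since the graph changes after every contraction, a single escape estimate is useless unless it holds \emph{uniformly over all graphs the walk can produce}. The paper's key structural observation is that when the LCRW on $T'$ returns to (the blob containing) its starting vertex, the contracted graph is \emph{still} an $M$-bounded subdivision of a tree with minimum degree at least $3$; hence the uniform SRW escape bound $\alpha(M)>0$ of \cref{lem:hitting} applies afresh at every return, and combined with \cref{prop:hitting_prop} the number of returns is dominated by a geometric random variable. Transience from every vertex then follows via \cref{lem:transience_equiv}, which converts ``finitely many collapses to a singleton'' into transience everywhere --- another step absent from your outline. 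Two smaller inaccuracies: the future of $v$ in the MSA is \emph{not} the trace of the LCRW (the trace is a strict superset; the future is recovered by the uncontraction procedure of \cref{lem:recovery}), and your ``precisely when'' asserts an equivalence between transience and existence of the limit that is not established in the paper (only the implication from transience to existence is proved). As written, the proposal identifies the correct strategy but leaves its decisive estimates unproved.
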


Let us remark that the convergence for the wired MST (in any graph) is immediate from a certain geometric characterization of the edges in the MST: namely, an edge $e$ is in the wired MST if and only if there is no cycle in the graph with $U_e$ being the maximal weight of the edges in the cycle. In fact, this characterization has proven to be a vital tool in understanding the geometry of the MST. Unfortunately, we do not know of any such nice characterization of the MSA.

The technique we employ involves an algorithm that goes back to Chu, Liu, Edmonds and Bock \cite{bock,Chu-Liu,Edmonds} (which we abbreviate as CLEB algorithm in this article). We describe this in more detail later in \Cref{sec:techniques,sec:CLEB_finite,sec:LCRW}, but for now, we mention that sampling branches of the MSA involves a process that is very similar to the well-known loop erased random walk; we call this process a CLEB walk. Let us briefly describe the CLEB walk in the special case when the weights are i.i.d.\ Exponential $(1)$:  simply do a random walk (with unit weights on the unoriented graph) and whenever we create a loop, we contract it, and continue the process (a more precise definition of what `contraction' means can be found in \Cref{sec:CLEB_finite}). We say this process is transient at a vertex $x$ if the trace of the loop contracting random walk (LCRW) converges to an infinite path. It turns out that if the CLEB walk is transient almost surely from every vertex in a graph then the wired MSA limit exists almost surely. In fact, to prove \Cref{thm:convergence}, we show the following result.
\begin{thm}\label{thm:LCRW_transient}
Let $T$ be an infinite undirected tree that is transient for the simple random walk and which has no vertex of degree 2.  Let $T'$ be a bounded subdivision of $T$. Then the LCRW is transient at every vertex of $T'$ almost surely.
\end{thm}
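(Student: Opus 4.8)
The plan is to first turn the statement into a concrete description of the LCRW on the tree $T'$. Since $T'$ is a tree, the only loops the walk can ever create are backtracks along a single edge, so the LCRW is exactly the following \emph{contract-on-backtrack} walk: one maintains a simple path of ``blobs'' (contracted connected subtrees) $c_0, c_1, \dots, c_t$ with $c_0$ containing the start $x$, and from the tip $c_t$ one takes a unit-rate random walk step, which either enters a fresh vertex (the trace grows, $t \mapsto t+1$) or steps back into $c_{t-1}$ (a backtrack, merging $c_{t-1}$ and $c_t$ and shrinking the trace, $t \mapsto t-1$). Because $T'$ is a tree there is exactly one edge between consecutive blobs, so the backtrack probability at the tip is precisely $1/D_t$, where $D_t$ is the number of edges leaving $c_t$. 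This description does two things for me: it shows that transience of the LCRW (convergence of the trace to an infinite path) amounts to the root blob $c_0$ being eventually frozen, equivalently to the walk visiting every finite neighbourhood of $x$ only finitely often; and it exhibits the mechanism driving escape, namely that each contraction can only \emph{increase} the degree of a blob and hence \emph{decrease} the chance that the walk ever retreats across it.

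Before the main argument I would record two reductions. First, a standard electrical comparison shows that simple random walk (SRW) on $T'$ is transient: subdividing an edge into a path of length $\ell \le M$ replaces a unit resistor by $\ell \le M$ unit resistors in series, so every effective resistance in $T'$ lies within a factor $M$ of its counterpart in $T$, and transience of $T$ (finite resistance to $\infty$) is preserved. I may thus use freely that $\kappa_0 := \mathcal{C}_{\mathrm{eff}}(x \leftrightarrow \infty) > 0$ in $T'$. Second, the hypothesis that $T$ has no degree-$2$ vertex, together with $M$-boundedness, guarantees that the degree-$2$ vertices of $T'$ occur only in runs of length at most $M-1$ between genuine branch points (degree $\ge 3$) or leaves; this prevents the tip from performing an unbounded, drift-free one-dimensional excursion and lets me treat each original edge of $T$ as a single bounded block crossed in finite time.

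The heart of the proof is an excursion analysis from the growing root blob $B$. I decompose time into successive excursions away from $B$: the walk leaves $B$ through a boundary edge, explores fresh territory (contracting at its tip), and either escapes to infinity or returns to $B$, in which case the whole region explored during that excursion is absorbed into $B$. Writing $B_k$ for the blob after $k$ returns and $q_k$ for the (past-measurable) escape probability of the $(k+1)$-st excursion, a conditional Borel--Cantelli argument shows that escape is almost sure as soon as $\sum_k q_k = \infty$. I would lower bound $q_k$ by the corresponding SRW escape probability $\kappa(B_k)/\deg(B_k)$, the electrical-network escape probability from the shorted blob $B_k$: the contractions the LCRW performs at its tip during an excursion amount to shorting recently visited vertices, which by Rayleigh monotonicity can only help the walk reach $\infty$ before returning. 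Rayleigh monotonicity under contraction also gives $\kappa(B_k) \ge \kappa_0 > 0$ for all $k$, while the bounded-subdivision and degree-$2$-free geometry bound the number of new boundary edges created per returning excursion, so $\deg(B_k)$ cannot outrun $\kappa(B_k)$; together these force $\sum_k q_k = \infty$, hence escape, and freezing of $c_0$ then yields convergence of the trace.

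The step I expect to be the main obstacle is precisely the monotone comparison ``the LCRW escapes at least as easily as SRW'' used to lower bound $q_k$. Unlike SRW, the LCRW lives on a graph that it modifies as it goes, so it is time-inhomogeneous and non-reversible, and one cannot simply invoke the electrical formula for escape probabilities. Making the intuition ``contraction $=$ shorting can only help'' rigorous will require either an explicit coupling of the adaptive walk with an auxiliary reversible walk on a fixed, worst-case contracted network, or an inductive argument over the subtrees of $T$ that tracks the joint law of the blob and the current excursion. Controlling the growth of $\deg(B_k)$ relative to $\kappa(B_k)$ is the delicate quantitative part, and it is exactly here that the transience of $T$ and the absence of degree-$2$ vertices are indispensable.
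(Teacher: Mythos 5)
Your high-level strategy is the same as the paper's: describe the LCRW on a tree as a walk on contracted blobs, compare its escape probabilities with those of the simple random walk via Rayleigh monotonicity, and iterate over successive returns to the starting blob. But the proposal has two genuine gaps, one of omission and one of substance. The step you yourself flag as ``the main obstacle'' --- that the LCRW escapes at least as easily as SRW --- is precisely the mathematical core of the theorem, and you do not prove it. In the paper this is \Cref{prop:hitting_prop}, proved by induction on the number of vertices of a finite tree whose leaves are wired into $\partial$: the first step is decomposed as in \eqref{eq:break_X}--\eqref{eq:break_Y}, the SRW satisfies the recursion \eqref{eq:recursion_RW}, the LCRW satisfies the excursion recursion \eqref{eq:recursion_Y} in which each completed excursion is contracted to a point, and the two are compared term by term using the induction hypothesis on subtrees together with \Cref{lem:contraction_resistance} (contracting a cycle cannot increase effective resistance, and fixes the degree of the starting vertex). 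Your suggested ``inductive argument over the subtrees of $T$'' is indeed the right route, but proposing it is not carrying it out; without this comparison nothing else in your argument can be quantified.

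The quantitative step that you do spell out would fail. You assert that the geometry ``bounds the number of new boundary edges created per returning excursion,'' but this is false: a returning excursion can explore an arbitrarily large region before backtracking into the root blob, and in a tree of minimum degree $3$ that entire region, with its entire boundary, is absorbed in the final merge, so $\deg(B_k)$ can jump by an unbounded amount at a single return and in particular can grow superlinearly in $k$. With only the constant lower bound $\kappa(B_k)\ge\kappa_0$ that Rayleigh monotonicity gives you, the resulting estimate $q_k \ge \kappa_0/\deg(B_k)$ need not have a divergent sum, so your conditional Borel--Cantelli argument does not close. The correct mechanism, which is the content of the paper's \Cref{lem:hitting}, is that the conductance grows \emph{proportionally} to the degree: every boundary edge of every blob leads into a disjoint subtree that is itself an $M$-bounded subdivision of a tree of minimum degree at least $3$, hence has effective resistance to infinity at most a constant (compare with the binary tree, after absorbing the subdivision into an $M$-dependent factor); by the parallel law $\kappa(B_k)\ge \deg(B_k)/(1+\alpha)$, so the SRW escape probability from \emph{any} such blob is at least a constant $\alpha(M)>0$, uniformly over the whole class. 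Since this class is preserved by the contractions the LCRW performs, combining the uniform bound with \Cref{prop:hitting_prop} at every return gives that the probability of $k$ returns is at most $(1-\alpha(M))^k$ --- a geometric bound that makes divergent-sum bookkeeping unnecessary, and which then feeds into \Cref{lem:transience_equiv}. This uniformity, not control of boundary growth, is where the hypotheses (no degree-$2$ vertices, bounded subdivision) actually enter; note also that you still need the paper's preliminary reduction discarding the pendant finite subtrees of $T'$ before you may assume minimum degree at least $3$.
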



Once we know that the limit exists, we turn to the properties of the wired limit of the MSA. An \textbf{arborescence in an infinite graph} with boundary $\partial$ is a subcollection of oriented edges from $\vec E$ which is acyclic, and such that every vertex not in $\partial$ has at most one outgoing edge and every vertex in $\partial $ has zero outgoing edge. The arborescence is \textbf{spanning} if $\partial  = \emptyset$. The future of a vertex can be defined in the same way as in the finite case, and two vertices $u,v$ are in the same component if their futures merge. Note that for any arborescence, this forms an equivalence relation, and thus it makes sense to talk about the connected components of an arborescence on a (possibly infinite) graph. For a positive integer $k$, a (undirected) tree is called $k$-ended if the maximum number of disjoint infinite paths in it is $k$. A component of an arborescence is $k$-ended if the tree obtained by replacing each oriented edge by an unoriented edge is $k$-ended.

It is known that the wired MST is almost surely one-ended in a large class of graphs \cite{AKS_93,lyons2006minimal}. We prove this statement for nonamenable graphs, assuming that the wired MSA limit exists a.s.\ The one-endedness of the components of the MSA implies that the `past' of every vertex is finite, and is the spanning tree analogue of the results of the type `no percolation at criticality' (e.g. \cite{BLPS_noperc}), which is why this question is interesting.

Recall that an infinite, connected graph $G$ is \textbf{amenable} if 
\begin{equation}
\inf_{S \subset V}  h(S):=\inf_{S \subset V} \frac{|\partial S|}{|S|} = 0, \text{ otherwise it is \textbf{nonamenable}},
\end{equation}
where $\partial S$ is the set of vertices not in $S$ with at least one neighbor in $S$ and $|S|$ is the cardinality of $S$.

Our next result is general enough to be stated for unimodular random graphs. For a brief introduction to this subject, we refer to the beginning of \Cref{sec:end_unimod} and to \cite{AL_unimodular,curiennotes} for a more elaborate exposition of the topic. In the unimodular setup, the definition of amenability can be generalized to invariant amenability which can be roughly described as follows. A random rooted graph is amenable if one can find a sequence of invariant percolations $\omega_n$ on it with a.s.\ finite clusters such that $\E(h(\omega_n(\rho))) \to 0$ where $\omega_n(\rho)$ is the component of $\omega_n$ containing the root $\rho$ and the expectation is over the graph as well as the percolation. We call such random rooted graphs \textbf{invariantly amenable},  and \textbf{invariantly nonamenable} otherwise.
Note that almost sure nonamenability implies invariant nonamenability, while the converse is false. For example, a (unimodular version of) supercritical Galton-Watson trees conditioned to survive with a positive probability of producing a single child is amenable but invariantly nonamenable, see below for a detailed definition. We do not state the precise definitions here and refer to \cite[Section 8]{AL_unimodular} for details.  
\begin{thm}\label{thm:end}
Suppose $(G, \rho)$ is a unimodular random rooted graph that is a.s.\ infinite, connected, invariantly nonamenable and $\E(\deg(\rho))<\infty$. Given $(G, \rho)$ we put i.i.d.\ Exponential $(1)$ weights $(U_{\vec e})_{\vec e \in \vec E}$ on the set of oriented edges $\vec E$. Suppose almost surely, given such a $(G, \rho, (U_{\vec e})_{\vec e \in \vec E})$, the CLEB walk from $\rho$ is transient almost surely. Then almost surely, the wired MSA limit on $(G, \rho, (U_{\vec e})_{\vec e \in \vec E})$ exists, the limit has infinitely many infinite components and each component is one-ended.
\end{thm}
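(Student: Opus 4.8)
The plan is to establish the three assertions—existence of the limit, one-endedness of each component, and infinitude of the number of components—in that order, treating one-endedness as the technical heart.

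First I would promote the hypothesis that the CLEB walk is transient from $\rho$ almost surely to the statement that it is transient from \emph{every} vertex almost surely. Since the set of vertices from which the loop-contracting walk is transient is an invariantly defined (measurable, re-rooting-equivariant) subset of $V$ that contains $\rho$ with probability one, the mass-transport principle for unimodular random graphs \cite{AL_unimodular} forces it to be all of $V$ almost surely. Transience from every vertex is exactly the sufficient condition, recorded in \Cref{sec:intro}, under which the wired MSA limit exists almost surely, so this yields the limiting spanning arborescence $\mathfrak{A}$. Transience also says that the future of each vertex is a genuine infinite path; since all futures inside one component are eventually merged, each component is infinite and possesses a single distinguished \emph{future end}.

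The core is one-endedness, which I would first reduce to a statement about pasts. Because $G$ is locally finite, a component fails to be one-ended precisely when some vertex has an infinite past: an infinite past is an infinite locally finite in-tree, so by K\"onig's lemma it contains an infinite backward ray, and this ray together with the forward future ray of the vertex forms a bi-infinite directed path, hence a second end; the converse is clear. Thus it suffices to prove that $\P[\mathrm{past}(\rho)=\infty]=0$, equivalently that $\mathfrak{A}$ contains no bi-infinite directed path. The route I would take is a mass-transport/resampling argument tied to the CLEB (Wilson-type) sampling: a vertex $u$ lies in the past of $v$ exactly when the loop-contracting walk started at $u$ visits $v$ before merging into the common future, so on the event of an infinite past one transports unit mass backward along the past-tree and uses transience of the loop-contracting walk to contradict $\E[\deg\rho]<\infty$. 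I expect this to be the main obstacle. Two standard tools are unavailable: the Burton--Keane furcation count rules out extra ends only on amenable graphs, whereas $G$ is nonamenable; and the cycle characterization that makes WUSF/MST one-endedness transparent has no MSA analogue (as noted in \Cref{sec:intro}). One is therefore forced to argue through the contracting walk itself, and here the difficulty is that—unlike loop \emph{erasure}—loop \emph{contraction} dynamically alters the underlying graph, so the reversibility and Green's-function estimates underpinning the WUSF one-endedness proofs \cite{lyons2006minimal} must be replaced by estimates that survive the contraction operation.

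Finally, for the number of components I would show it is an almost sure constant by ergodicity (restricting to an ergodic component of the unimodular law if needed). A standard ergodicity/mass-transport argument shows this constant cannot be finite and at least two, leaving the dichotomy between a single component and infinitely many. To exclude a single component I would use invariant nonamenability: a single component would make $\mathfrak{A}$ a one-ended invariant spanning tree of $G$, but on an invariantly nonamenable graph the escaping futures issued from two distinct vertices fail to merge with positive probability—reflecting that the underlying transient walks exit through ``independent'' portions of the boundary—so that two vertices lie in distinct components with positive probability. Hence the number of infinite components is almost surely infinite, and together with the previous steps all three assertions follow.
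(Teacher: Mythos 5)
Your opening step (upgrading transience at $\rho$ to transience from every vertex by unimodularity, then invoking the sufficient condition of \Cref{prop:necessary_cond}) is exactly the paper's route, and your reduction of one-endedness to the absence of an infinite past is correct. The genuine gap is in the core step itself. You propose to rule out an infinite past by transporting mass backward along the past-tree and ``using transience of the loop-contracting walk to contradict $\E[\deg(\rho)]<\infty$,'' but no such argument is supplied, and none can work at this level of softness: there exist invariant spanning forests of invariantly nonamenable unimodular graphs all of whose components are two-ended --- for instance, the union of two color classes of the invariant proper edge $3$-coloring of the $3$-regular tree is a spanning line ensemble, and orienting each line (say, uniformly at random) makes every vertex have out-degree one, an infinite future, \emph{and} an infinite past, with expected degree exactly $2$. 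So unimodularity, nonamenability, finite expected degree, and the existence of forward rays are jointly consistent with two-ended components; any proof must exploit the specific structure of the MSA as a function of the weights. The paper does this in two stages: (i) mass transport gives expected degree $2$ in the limit $T$, so by \cite[Theorem 6.2]{AL_unimodular} every component has \emph{at most} two ends (\Cref{lem:MSA_properties}); (ii) if two-ended components occurred, there would be infinitely many, and a local surgery (adding reversals of connecting edges and rerouting futures) merges several of them into a single component with at least three ends; crucially, this surgery is realized as a change of weights for which the modified arborescence is still the wired MSA limit (\Cref{lem:perturbation2,lem:perturb_infinite}) and whose law is absolutely continuous with respect to the original i.i.d.\ law (\Cref{lem:abs_cont_easy,lem:abs_cont_W'}), contradicting (i). This perturbation-plus-absolute-continuity machinery is the actual content of the theorem and is precisely what your sketch leaves open; your auxiliary claim that ``$u$ lies in the past of $v$ exactly when the loop-contracting walk started at $u$ visits $v$ before merging'' is likewise not established.

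Your treatment of the number of components has a gap of the same kind. The dichotomy ``an almost sure constant number of components cannot be finite and at least two, hence it is one or infinity'' is not a consequence of ergodicity alone: that standard argument needs insertion tolerance, which the MSA does not enjoy (one cannot locally force an edge into the MSA without changing the law singularly, which is exactly why the paper builds the absolute-continuity apparatus). Your exclusion of a single component then rests on the unproven assertion that futures of two distinct vertices fail to merge with positive probability on invariantly nonamenable graphs. The paper handles all finite counts simultaneously: if the number of infinite components were finite with positive probability, choosing one uniformly at random yields an invariant, almost surely connected percolation with at most two ends, hence with critical Bernoulli site-percolation parameter equal to $1$, which forces invariant amenability by \cite[Theorem 8.9]{AL_unimodular} --- a contradiction (part (c) of \Cref{lem:MSA_properties}).
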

The choice of Exponential random variables in \Cref{thm:end} is simply for concreteness, we believe it is possible to extend it to more general continuous distributions by modifying parts of the proof appropriately. Next, we record the following corollary for unimodular, nonamenable trees.
\begin{corollary}\label{cor:unimod_tree}
Let $(T ,\rho)$ be a unimodular random rooted tree that is almost surely connected, locally finite, and nonamenable, and assume $\E(\deg(\rho))<\infty$. Then the wired MSA limit for i.i.d.\ Exponential $(1)$ weights on $\vec E$, where $T = (V,E)$, exists almost surely and the limit has infinitely many infinite components, and each component is one-ended almost surely.
\end{corollary}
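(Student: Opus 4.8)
The plan is to deduce the corollary from \Cref{thm:end} by verifying that corollary's hypotheses on $(T,\rho)$ imply all the hypotheses of \Cref{thm:end} for a.e.\ realization. Most of these are immediate: $(T,\rho)$ is assumed a.s.\ connected with $\E(\deg(\rho))<\infty$; it is a.s.\ infinite because a nonamenable graph cannot be finite; and invariant nonamenability follows from a.s.\ nonamenability, as recorded after the definition of invariant amenability in the introduction. The only substantial point is the transience hypothesis, so the heart of the argument is to show that, for a.e.\ realization, the CLEB walk from $\rho$ is a.s.\ transient; I would obtain this by invoking \Cref{thm:LCRW_transient} realization-by-realization.

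To set up \Cref{thm:LCRW_transient}, I would exhibit $T$ as a bounded subdivision of a transient tree with no degree-$2$ vertex. The key observation is that nonamenability forces degree-$2$ segments to have uniformly bounded length within each fixed realization: if $c=c(\omega)>0$ is the Cheeger constant and $u_1,\dots,u_k$ are consecutive degree-$2$ vertices forming a subdivided segment, then $S=\{u_1,\dots,u_k\}$ has $|\partial S|=2$ (its two branch endpoints, which are distinct since $T$ is a tree), so $h(S)=2/k\ge c$ and hence $k\le 2/c$. Letting $\hat T$ be the tree obtained by suppressing all degree-$2$ vertices of $T$, we get that $\hat T$ has no vertex of degree $2$ and that $T$ is an $M$-bounded subdivision of $\hat T$ with $M=M(\omega)\le 2/c(\omega)+1<\infty$. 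Moreover, nonamenability implies the simple random walk on $T$ is transient, and since $T$ is a bounded subdivision of $\hat T$, an effective-resistance comparison (each edge of $\hat T$ becomes a path of length between $1$ and $M$, so effective resistances to infinity change by at most the factor $M$) shows $\hat T$ is transient as well.

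With these facts, \Cref{thm:LCRW_transient} applies to the deterministic tree $T$, viewed as a bounded subdivision of the transient, degree-$2$-free tree $\hat T$, and yields that the LCRW is transient at every vertex of $T$. Since the weights are i.i.d.\ Exponential$(1)$, the CLEB walk coincides with the LCRW, so the CLEB walk from $\rho$ is a.s.\ transient; as this holds for a.e.\ realization, the transience hypothesis of \Cref{thm:end} is verified. That theorem then delivers the existence of the wired MSA limit, its infinitely many infinite components, and the one-endedness of each component, which is exactly the assertion of the corollary.

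The step I expect to be most delicate is the realization-wise application of \Cref{thm:LCRW_transient}: one must observe that although the subdivision bound $M(\omega)$ depends on the realization, it is a.s.\ finite, and that \Cref{thm:LCRW_transient}, being a statement about a fixed deterministic tree, is insensitive to this dependence. Two secondary points also warrant care: first, that suppressing degree-$2$ vertices produces no pathology (an infinite degree-$2$ ray, for instance, is already excluded by nonamenability, as it would make $T$ amenable), and second, that the implication ``nonamenable $\Rightarrow$ transient'' is applied in a form valid for the possibly unbounded-degree trees at hand rather than only for bounded-degree graphs.
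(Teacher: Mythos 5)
Your overall strategy is the same as the paper's: reduce to \Cref{thm:end}, with the only substantive hypothesis being CLEB-walk transience, obtained realization-by-realization from \Cref{thm:LCRW_transient} after using nonamenability to bound the degree-$2$ chains (your isoperimetric estimate $|\partial S|=2$, $h(S)=2/k\ge c$, hence $k\le 2/c$, is exactly the mechanism behind the paper's ``uniformly bounded length'' claim), and then conclude via \Cref{lem:CLEB_LCRW} and \Cref{thm:end}. The soft preliminary verifications (a.s.\ infiniteness, invariant nonamenability from a.s.\ nonamenability, the resistance comparison between $T$ and $\hat T$, and the harmlessness of the realization-dependent bound $M(\omega)$) are all handled correctly.

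The one genuine gap is in how you supply the transience hypothesis of \Cref{thm:LCRW_transient}, and it comes from your treatment of leaves. The paper first passes to $T_{\mathsf{core}}$ by iteratively deleting leaves (as in the proof of \Cref{thm:LCRW_transient}) and only then suppresses degree-$2$ vertices; the resulting base tree has minimum degree at least $3$, so its transience is automatic and elementary. You skip the pruning, so your $\hat T$ may retain leaves (a nonamenable tree can certainly have them, e.g.\ a $3$-regular tree with a pendant leaf at every vertex), and a degree-$2$-free tree with leaves need not be transient (a bi-infinite path with a leaf attached to each vertex is recurrent). You therefore must import transience from ``nonamenable $\Rightarrow$ transient'' for $T$ itself. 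That implication is true for locally finite graphs of unbounded degree --- e.g.\ by Thomassen's isoperimetric criterion, since $|\partial S|\ge c|S|$ for finite connected $S$ and $\sum_k (ck)^{-2}<\infty$ --- but the standard bounded-degree arguments (Cheeger inequality, spectral gap) do not apply here, and you assert the fact rather than prove or cite it; as written, this is the missing step. Two fixes: either cite such a criterion, or adopt the paper's order of operations (prune leaves first, then suppress degree-$2$ vertices; nonamenability still bounds the chains of the pruned tree provided the set $S$ in the isoperimetric argument also absorbs the finite hanging trees), which makes the argument self-contained. Note the trade-off: your ordering lets you apply \Cref{thm:LCRW_transient} directly to $T$, whereas the paper's ordering applies it to $T_{\mathsf{core}}$ and must separately argue that the finite trees hanging off $T_{\mathsf{core}}$ carry a deterministic piece of the MSA, so neither route is strictly simpler --- but the paper's avoids the unproven transience input.
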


We now present a result which is a generalization of \Cref{cor:unimod_tree}. Consider a Galton-Watson tree $T_{\text{GW}}$ with offspring distribution $Z$ with $1 < \E(Z) < \infty$. For the sake of brevity, we assume that $\P(Z=0)  =0$ (see \Cref{wlgp_0}). $T_{\text{GW}}$ comes with a natural root $\rho$ and we shall think of this random tree as a rooted random graph $(T_{\text{GW}}, \rho)$. This is the so-called supercritical Galton-Watson tree (which is infinite a.s.\ by our assumption).  It is well-known that $T_{\text{GW}}$ is not unimodular because the root vertex (vertex with no parent) is special: it has stochastically one less degree. However, there is a standard way to turn this into a unimodular random graph as follows. Take two independent copies of $T_{\text{GW}}$, join their roots by an edge, and uniformly pick one endpoint as the root $\rho$, or equivalently the root vertex has offspring distribution given by $Z+1$ and each of its children evolves according to $T_{\text{GW}}$ (see \cite[Example 1.1]{AL_unimodular}, \cite{LPP_GW} or \cite[Section 5.1]{curiennotes}). This is known as the \emph{augmented Galton-Watson tree}, and let us denote this by $(T_{\text{AGW}}, \rho)$, we inverse bias by the degree of $\rho$ to obtain the \emph{unimodular Galton-Watson tree}, denoted by $(T_{\text{UGW}}, \rho)$. We do not need the specifics of the definitions here, except the fact that $T_{\text{AGW}}$ and $T_{\text{UGW}}$ are equivalent as probability measures on rooted graphs, so we refer the reader to \cite[Section 5.1]{curiennotes} for detailed definitions and references.
\begin{thm}\label{thm:transient_GW}
On an almost sure sample of supercritical $(T_{\text{GW}}, \rho)$ or $(T_{\text{UGW}}, \rho)$ as above, the LCRW is transient from every vertex almost surely.
\end{thm}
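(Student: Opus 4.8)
The plan is to deduce \Cref{thm:transient_GW} from \Cref{thm:LCRW_transient} by embedding inside $T_{\text{GW}}$ a subtree that \emph{is} a bounded subdivision of a transient tree with no degree-2 vertex, and then transferring transience from this subtree to the whole tree. By \Cref{wlgp_0} I assume $\P(Z=0)=0$, so $T_{\text{GW}}$ is a.s.\ infinite with no leaves, and I set $m=\E(Z)>1$. The direct attempt of suppressing all degree-2 vertices of $T_{\text{GW}}$ to obtain a reduced tree $\hat T$ (which has no degree-2 vertex and is transient, since collapsing paths only lowers resistances and $T_{\text{GW}}$ is a.s.\ transient) does not feed into \Cref{thm:LCRW_transient}, because the resulting subdivision is \emph{unbounded}: as soon as $\P(Z=1)>0$, the lengths of the pipes between branch points have geometric tails and are unbounded. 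First I would record that transience of the LCRW is a property of the graph rather than of the starting vertex, since from any vertex the walk a.s.\ reaches any other and the escape of its trace to infinity is unaffected by the finite contracted initial segment; hence it suffices to show that $T_{\text{GW}}$ is a.s.\ LCRW-transient.

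To produce a \emph{bounded} subdivision I would run a standard embedding argument for supercritical branching. Fix a generation gap $k$ and, by a bootstrap/second-moment estimate (using $m^k\to\infty$), choose $k$ so large that, declaring a vertex \emph{good} when the embedding survives below it, the good vertices carrying at least three good descendants exactly $k$ generations below them dominate a branching process of mean $>1$; this process survives with positive probability. On survival one obtains a subtree $S\subseteq T_{\text{GW}}$ containing the root, whose branch points are good vertices joined by paths of length exactly $k$, so that $S$ is a $k$-bounded subdivision of the rooted ternary tree $\hat S$, which is transient and free of degree-2 vertices. The weights inherited by $S$ are i.i.d.\ Exponential$(1)$, so \Cref{thm:LCRW_transient} applies conditionally on $S$ and shows the LCRW on $S$ is transient a.s.

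The remaining ingredient is a monotonicity lemma: if $S\subseteq T$ are trees and the LCRW on $S$ is transient, then so is the LCRW on $T$. On a tree this should follow because every excursion of the $T$-walk into a \emph{finite} bush hanging off $S$ closes up into a loop and is contracted away, so the trace projected onto $S$ reproduces the $S$-LCRW, while excursions into infinite hanging subtrees only supply further routes to infinity; neither can destroy escape. Granting this, the embedding gives $p:=\P(T_{\text{GW}}\text{ is LCRW-transient})>0$. To upgrade $p$ to $1$, decompose $T_{\text{GW}}$ into its root and the i.i.d.\ Galton--Watson subtrees $T^{(1)},\dots,T^{(Z)}$ hanging at the children of $\rho$. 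By monotonicity, if any $T^{(i)}$ is LCRW-transient then so is $T_{\text{GW}}$, whence $p\ge 1-\E[(1-p)^{Z}]$. Since $Z\ge 1$ a.s.\ and $\P(Z>1)>0$, we have $\E[(1-p)^Z]<1-p$, i.e.\ $1-\E[(1-p)^Z]>p$ for every $p\in(0,1)$; this forces $p\in\{0,1\}$, and combined with $p>0$ yields $p=1$. Thus $T_{\text{GW}}$ is a.s.\ LCRW-transient, and by vertex-independence the LCRW is transient from every vertex a.s. The case of $T_{\text{UGW}}=T_{\text{AGW}}$ is identical, since below any vertex the tree is an ordinary supercritical Galton--Watson tree.

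I expect the monotonicity lemma to be the main obstacle. In contrast to Rayleigh monotonicity for the simple random walk, the LCRW is non-reversible and contracts the graph dynamically, so transferring transience from $S$ to the larger tree $T$ demands a careful coupling or domination that genuinely controls the walk's excursions into the hanging subtrees of $T\setminus S$ --- showing both that finite excursions are contracted without obstructing escape and that infinite ones cannot trap the walk. By comparison, the embedding and the zero--one computation are routine.
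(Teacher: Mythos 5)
Your reduction has a genuine gap, and it is exactly the one you flag yourself: the monotonicity lemma (LCRW transient on a subtree $S \subseteq T$ implies LCRW transient on $T$) is not proved, and the heuristic you offer for it does not work. On a tree every contracted cycle is a backtrack, so when the walk on $T$ makes an excursion into a bush hanging off $S$ at a vertex $v$ and returns, the visited bush vertices are merged into the path vertex containing $v$. From then on the tip of the path is a ``blob'' whose outgoing edges include both the $S$-edges at $v$ and all edges from the swallowed bush vertices pointing deeper into the bush, and the next step is uniform over this enlarged edge set. Hence the trace projected onto $S$ is \emph{not} the $S$-LCRW: its transition probabilities are distorted by an amount depending on the absorbed excursions, and in $T_{\text{GW}}$ the bushes hanging off your embedded subdivided ternary tree are themselves infinite Galton--Watson trees, so excursions need not even be finite. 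The same unproved comparison is hidden in your other two steps: ``transience is independent of the starting vertex'' and the recursion $p \ge 1 - \E[(1-p)^Z]$ both require transferring transience of a standalone walk on a subtree to the single walk running on the larger tree (there are no independent walks on the $T^{(i)}$'s to which the event factorizes). Note that the paper is explicit that no such comparison is available: it cannot even show in general that SRW-transience implies LCRW-transience (this is left as a conjecture in \Cref{sec:open}), it leaves the $0$--$1$ law for CLEB-walk transience open (\Cref{qn:0-1}), and the substitute it uses is the weaker equivalence of \Cref{lem:transience_equiv}. So the missing lemma is not a routine technicality; it is at least as hard as problems the authors state as open.

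For contrast, the paper's proof avoids monotonicity entirely by exploiting regeneration. One reveals the Galton--Watson tree together with the walk: whenever the contracted path $P_i$ collapses to a singleton (the stopping times $\tau_k$ of \Cref{sec:CLEB_walk}), the unexplored subtrees hanging off the current collapsed position are, conditionally on the past, fresh i.i.d.\ copies of $T_{\text{GW}}$. Since SRW on $T_{\text{GW}}$ is a.s.\ transient, the annealed probability $\alpha>0$ that SRW never returns to the root is the same at every such time, and \Cref{prop:hitting_prop} (LCRW escapes at least as well as SRW on finite trees, passed to the limit as in the proof of \Cref{thm:LCRW_transient}) converts this into a uniform conditional lower bound $\alpha$ on the probability that the LCRW never returns to $P_{\tau_k}$. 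The number of returns is then dominated by a geometric random variable and transience follows, with no comparison between walks on different graphs ever needed. If you want to rescue your approach, the monotonicity lemma is the statement you must actually prove, and at present it is open.
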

\begin{remark}\label{wlgp_0}
The assumption $\P(Z=0)  = 0$ in \Cref{thm:transient_GW} is without any loss of generality. Indeed, if  $\P(Z=0)>0$, then the vertices with an infinite line of descent form a branching process with zero probability of producing no offspring (see \cite{AN_branching}, Chapter 12). Furthermore, for every vertex $v$ which has a finite line of descent, let $T_v$ be the set of all its descendants, which is a finite tree by definition. Then for any wired spanning arborescence containing $T_v$, the oriented edges on $T_v$ are deterministic: it forms the only spanning arborescence of the tree spanned by the vertices of $T_v$ rooted at $v$.
\end{remark}
We finish with a theorem analogous to \Cref{cor:unimod_tree}, but for Galton-Watson trees.

\begin{thm}\label{thm:end_GW}
    Let $(T, \rho)$ be either $(T_{\text{GW}}, \rho)$ or $(T_{\text{UGW}}, \rho)$ as above. Then the wired MSA limit for i.i.d.\ Exponential $(1)$ weights on $\vec E$ where $T = (V,E)$ exists almost surely, the limit has infinitely many infinite components, and each component is one-ended almost surely.
\end{thm}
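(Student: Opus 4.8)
The plan is to treat the two cases $(T_{\text{UGW}},\rho)$ and $(T_{\text{GW}},\rho)$ separately: derive the unimodular case directly from \Cref{thm:end}, and then transfer it to the non-unimodular Galton--Watson tree through the bridge that relates $T_{\text{AGW}}$ to two independent copies of $T_{\text{GW}}$. In both cases the existence of the wired MSA limit is the easy half: by \Cref{thm:transient_GW} the LCRW is transient from every vertex of $T_{\text{GW}}$ and of $T_{\text{UGW}}$ almost surely, and transience from every vertex already implies that the wired MSA limit exists almost surely (the general principle recorded just before \Cref{thm:convergence}). So only the two geometric assertions — infinitely many infinite components and one-endedness of each — require work.

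For $(T_{\text{UGW}},\rho)$ I would verify the hypotheses of \Cref{thm:end}. Unimodularity holds by construction, and $T_{\text{UGW}}$ is almost surely infinite and connected because $\P(Z=0)=0$ (\Cref{wlgp_0}) and $\E(Z)>1$. The expected degree is finite since $\E(Z)<\infty$ (the root degree of $T_{\text{AGW}}$ is $Z+1$, and the inverse-degree-biasing defining $T_{\text{UGW}}$ preserves finiteness of the expected degree). Invariant nonamenability of a supercritical unimodular Galton--Watson tree is standard and is exactly the property flagged in the discussion preceding \Cref{thm:end} (see \cite{AL_unimodular}); this is precisely why we invoke \Cref{thm:end} rather than \Cref{cor:unimod_tree}, as $T_{\text{UGW}}$ need not be almost surely nonamenable when $\P(Z=1)>0$. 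The remaining transience hypothesis is supplied by \Cref{thm:transient_GW}. Hence \Cref{thm:end} gives the full conclusion for $T_{\text{UGW}}$, and since $T_{\text{AGW}}$ and $T_{\text{UGW}}$ are mutually absolutely continuous as measures on rooted graphs, the same almost sure conclusions hold for $T_{\text{AGW}}$.

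It remains to transfer the geometry to a single $T_{\text{GW}}$. Realize $T_{\text{AGW}}$ as two independent copies $A,B$ of $T_{\text{GW}}$ joined by a bridge $e^*=\{\rho_A,\rho_B\}$, so that copy $A$ is distributed exactly as $T_{\text{GW}}$. The key observation is that $e^*$ is a cut edge: on the event $\cE=\{e^*\text{ is used in neither orientation by the wired MSA limit on }T_{\text{AGW}}\}$, any spanning arborescence avoiding $e^*$ splits as a disjoint pair of spanning arborescences of $A$ and of $B$, and weight minimization decouples across the two sides, so on $\cE$ the restriction of the wired MSA limit on $T_{\text{AGW}}$ to $A$ coincides with the standalone wired MSA limit on $A$, i.e.\ with the wired MSA limit on $T_{\text{GW}}$. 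Since the wired MSA limit on $T_{\text{AGW}}$ has one-ended components almost surely, and on $\cE$ no edge crosses $e^*$ so the $A$-components are genuine components of the whole, the standalone $A$-limit has one-ended components on $\cE$. To move this to the standalone law of $T_{\text{GW}}$ I would check that this law is absolutely continuous with respect to the law of the $A$-side conditioned on $\cE$; this reduces to positivity of $\P(\cE\mid\text{copy }A)$ for almost every realization of $A$, which holds because making both oriented weights of $e^*$ large forces $e^*$ to be unused (copy $A$ is itself transient and routes to the boundary without the bridge). For the count of infinite components, I would instead condition on generic large values of the two bridge weights (legitimate by Fubini applied to the full-measure $T_{\text{AGW}}$ statement): conditionally $A$ and $B$ are independent and $e^*$ is unused, so $N_{\text{AGW}}=N_A+N_B$ with $N_A,N_B$ i.i.d.\ and $N_{\text{AGW}}=\infty$ almost surely, which forces $N_A=\infty$ almost surely.

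The main obstacle is precisely this transfer across the bridge, and two points need care. First, the absolute-continuity direction must be the right one: I must establish standalone-$T_{\text{GW}}\ll(\text{law of the }A\text{-side given }\cE)$, which hinges on the positivity of $\P(\cE\mid\text{copy }A)$; the delicate part is controlling the infinite-volume limit, since for a fixed large bridge weight the exhaustion level beyond which $e^*$ stays unused could a priori drift to infinity. Second, pinning $N_A=\infty$ requires that a sufficiently heavy bridge is asymptotically unused in the infinite-volume limit, which again must be argued through finite approximations with weight thresholds that do not degenerate. Verifying the invariant nonamenability of $T_{\text{UGW}}$ is comparatively routine, but I would state it carefully so as to cover the case $\P(Z=1)>0$.
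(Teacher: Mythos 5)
Your proposal follows essentially the same route as the paper's: for $(T_{\text{UGW}},\rho)$ you verify the hypotheses of \Cref{thm:end} (existence via \Cref{thm:transient_GW}, \Cref{lem:CLEB_LCRW} and \Cref{prop:necessary_cond}, plus invariant nonamenability of the supercritical unimodular Galton--Watson tree, which the paper imports from Chen--Peres), and for $(T_{\text{GW}},\rho)$ you realize $T_{\text{AGW}}$ as two independent copies of $T_{\text{GW}}$ joined by a bridge $e^*$, use absolute continuity of $T_{\text{AGW}}$ with respect to $T_{\text{UGW}}$, and decouple the MSA across an unused bridge. Your packaging of the transfer --- standalone law of the $A$-side absolutely continuous with respect to its conditional law given $\cE=\{e^*\text{ unused}\}$, together with the count $N_{\text{AGW}}=N_A+N_B$ --- is an equivalent reformulation of the paper's argument that if a bad event had positive probability on one copy, it would occur on both copies simultaneously together with a heavy bridge, producing a positive-probability event that is null for $T_{\text{UGW}}$.

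The genuine gap is the step you flag twice but never close: you must show that for almost every realization of the two weighted copies there is a finite, realization-dependent threshold such that whenever both orientations of $e^*$ carry weight above it, the bridge is unused by the wired MSA limit, \emph{uniformly along the exhaustion}. The justification you offer (``copy $A$ is transient and routes to the boundary without the bridge'') is not a proof: since the MSA admits no cycle-type membership criterion (unlike the MST), an edge being very heavy does not by itself exclude it, and this is exactly where your own worry about the cutoff level drifting to infinity bites. The paper closes this with the CLEB walk: let $M$ be the total weight of the sets $S_{\tau_{N_1},1}$ and $S_{\tau_{N_1},2}$ exposed by the CLEB walks started at the two endpoints of $e^*$ (a.s.\ finite by transience, \Cref{sec:CLEB_walk}). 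If both orientations of $e^*$ have weight larger than $2M$, the cumulative subtractions at the contracted vertex containing an endpoint of $e^*$ never exceed $M$, so the bridge is never the minimal outgoing edge, the CLEB walks in $T_{\text{AGW}}$ coincide with the standalone ones, and---since the MSA is contained in the edges exposed by the CLEB walk algorithm (\Cref{lem:recovery}, \Cref{lem:order_invariance})---the MSA of $T_{\text{AGW}}$ equals the union of the two standalone MSAs and avoids $e^*$ in every sufficiently large finite volume, hence in the limit. This uniform threshold is precisely what makes your positivity claim $\P(\cE\mid A,w_A)>0$ and your counting identity rigorous; without it the proof is incomplete. A secondary point: conditioning on the heavy-bridge event biases each side's law (the threshold depends on the realization), so under that conditioning $N_A$ and $N_B$ are i.i.d.\ but not exactly standalone-distributed; the conclusion survives because the bias is a.s.\ positive, hence mutually absolutely continuous with the standalone law, but this should be stated.
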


\subsection{Main techniques and perspectives}\label{sec:techniques}

It is often useful to find some tractable algorithm to sample the random object we wish to study. As mentioned before, Kruskal and Prim's algorithms, and the related invasion percolation models have been extremely fruitful in studying the MST. The known algorithm to sample the MSA is a recursive one, which goes back to Chu, Liu, Edmonds and Bock \cite{bock,Chu-Liu,Edmonds} (which we abbreviate as CLEB algorithm in this article).

Recall that in Kruskal's algorithm, one simply orders the edges according to the weights. Starting from the empty graph,  we add edges one by one from the lowest to the highest weight, and if such an addition creates a cycle we do not add that edge. The resulting tree at the end of the process is the minimal spanning tree.

Now let us briefly describe the CLEB algorithm. For every vertex, reveal the minimal weight outgoing edge, and then subtract that weight from the remaining outgoing edges. If there are no cycles then we have actually revealed the MSA. If there is a cycle, `contract' it into a single vertex and remove all the self-loops thus created. Now recursively apply the algorithm to this contracted graph.

Note that Kruskal's algorithm is a function of only the order of the edges according to their weights, and not their actual value. Consequently, its law is independent of the law of the weights. On the other hand, it is unclear from the CLEB algorithm whether the law of the resulting MSA depends on the weight distribution or not (primarily because of the weight subtraction step). In fact, we prove in \Cref{sec:basic} that the law is indeed dependent on the specific distribution of the weights. This indicates that the MSA is more unwieldy than the MST, and indeed it is reflected in the nature of the results we can prove in this article, at least compared to the results on MST already present in the literature. For example, it is immediate from Kruskal's algorithm that the local weak limit for the MST exists on any infinite locally finite graph, while we do not know the same for the MSA, and only prove partial results in this article.

On the flip side a certain connection with Wilson's algorithm  \cite{wilson_algo}, a popular algorithm to sample the uniform spanning tree emerges. Indeed, it can be shown (see \Cref{sec:ECL_finite}) that the revealment of the minimal weight edges can be done step by step and in any order, and the resulting arborescence does not depend on the order in which edges are revealed and the cycles are contracted in a certain precise sense. This is very similar to Wilson's algorithm, where one reveals simple random walk steps, and if a loop is created it is `erased' rather than contracted. In fact, Wilson's algorithm works in the oriented and weighted setup as well.
Indeed, we make this connection precise in \Cref{sec:wilson}. Briefly speaking, we prove that if we consider weights $e^{-\beta U_{\vec e}}$ and let $\beta \to \infty$, then the loops erased during Wilson's algorithm are essentially the same as those contracted in CLEB algorithm if one ignores the multiplicities. Roughly, every loop is traversed by the random walk `infinitely many times' as $\beta \to \infty$ rather than being contracted. A detailed statement can be found in \Cref{prop:wilson_cleb}.

We also prove in \Cref{sec:invasion} that in the unoriented setup, the CLEB algorithm is roughly the same as invasion percolation, and hence it can be seen as an extension of Prim's algorithm in the oriented setup.

The proof of \Cref{thm:convergence}  relies on a certain comparison with hitting probabilities of simple random walk, see \Cref{prop:hitting_prop}. The core of the argument relies on the fact that contracting edges decrease effective resistances for simple random walk. The proof of \Cref{thm:end} relies on some general results coming from the theory of unimodular random graphs combined with a certain stability result of the MSA under perturbations (stated in \Cref{lem:perturbation,lem:perturbation2}). Let us mention that such stability results are often key in understanding the geometry of the optimizers (ground states) in various disordered systems. (see, e.g., \cite{NS_ground_spinglass,majority_dynamics_2020}).


\paragraph{Organization:} In \Cref{sec:basic} we prove that the law of the MSA does depend on the specific distribution of the weights. In \Cref{sec:CLEB_finite} we describe the Chu, Liu, Edmonds and Bock (CLEB) algorithm, based on which we define the `CLEB walk' and describe the CLEB walk algorithm. In \Cref{sec:CLEB_walk} we describe some tractable ways to recover some information about the MSA from a piece of CLEB walk. In \Cref{sec:invasion} we prove that the CLEB walk started from a vertex is the same as the Invasion tree in the unoriented setup (this section is independent of the rest of the article).

In \Cref{sec:ECL_infinite}, we extend the CLEB walk in the infinite setup and define notions of transience or recurrence for this process. In \Cref{sec:LCRW} we introduce the Loop contracting random walk, which is essentially the CLEB walk for i.i.d.\ Exponential weights, and enjoys certain Markovianity exploiting the memoryless property. In \Cref{sec:LCRW_SRW} we show how the Loop contracting random walk compares with the simple random walk in a tree and prove \Cref{thm:convergence,thm:LCRW_transient,thm:transient_GW}. In \Cref{sec:end} we prove \Cref{thm:end,thm:end_GW}. In \Cref{sec:wilson}, we show that the CLEB walk can be seen as a certain limit of Wilson's algorithm. Finally, in \Cref{sec:open} we finish with some open questions and simulations.

\section*{Summary of notations}
\begin{itemize}
\item $G = (V, E)$, when finite is assumed to be a connected multigraph with no self-loops. $\vec E$ denotes the set of oriented edges by including both orientations of every edge in $E$. Sometimes these oriented edges come with weights $(U_{\vec e})_{\vec e  \in \vec E}$.  $G=(V,E)$ when infinite is additionally assumed to be locally finite.

\item $\vec e$ is an oriented edge. 

\item For any $S \subset \vec E$, let $V(S)$ denote the set of endpoints of $S$.


\item The head of $\vec e$ is denoted $\vec e_+$ and the tail is denoted $\vec e_-$. 

\item Given a vertex $v$, an \emph{outgoing edge} from a vertex $v$ is an oriented edge $\vec e$ with $\vec e_- = v$.

\item $\cO(v)$: the set of outgoing edges from $v$.
\item Cycle is defined to be a collection of oriented edges $(\vec e_1, \ldots, \vec e_k)$ so that $(\vec e_i)_+  = (\vec e_{i+1})_-$ for all $1 \le i \le k-1$ and $(\vec e_k)_+ = (\vec e_0)_-$. 
\item $\mathfrak F_T(v)$: the future of $ v$ in $T$ (\Cref{sec:intro}).
\item Contraction and uncontraction procedures of a cycle are precisely defined in \Cref{sec:ECL_finite}.
\item $Y_i$: set of edges exposed in the original CLEB algorithm in step $i$. (\Cref{sec:CLEB_finite})
\item $S_i$: set of edges exposed in step $i$ of a CLEB walk (\Cref{sec:CLEB_finite}). 
\item $\tau_{N_1}$: The last time when the path obtained by contracting cycles during a CLEB walk started from $x$ is a singleton (\Cref{sec:CLEB_walk}).
\item $(G_i = (V_i, E_i))_{i \ge 0}$: the sequence of graphs obtained by performing the CLEB walk algorithm. $(U_{i, \vec e})_{\vec e \in \vec E}$ denotes the modified weights in $\vec E_i$. (\Cref{sec:CLEB_finite})
\item $P_i  = S_i \cap \vec E_i$: the path consisting of edges which are not contracted (\Cref{sec:CLEB_finite}). See also analogous definition in the context of loop contracting random walk (\Cref{sec:LCRW}).
\end{itemize}

\paragraph{Acknowledgement:} We thank Omer Angel, Louigi Addario-Berry, Russell Lyons, Yinon Spinka and Benoit Laslier for several stimulating discussions. We thank Louigi further for inspiring \Cref{sec:invasion,sec:wilson}.
\section{Dependence on distribution}\label{sec:basic}

The minimum spanning tree (MST) enjoys a nice property that the law is independent of the distribution of the weights chosen as long as they are continuous: indeed it is clear from Kruskal's algorithm \cite{kruskal1956shortest} that the MST is a function of the ordering of the edges, as long as there are no ties. However, the MSA does not depend only on the rankings of the oriented edges, but also on the actual weights, as is clear from \Cref{fig:counter_weight}, so it is unclear whether there is a `linear' way to sample the edges of the MSA.
\begin{figure}[ht]
\centering
\includegraphics[width = 0.5\textwidth]{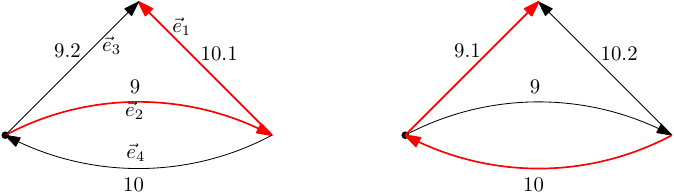}
\caption{The weights are depicted by the numbers. The ranking of the weights are the same, but have different MSAs as drawn in red}\label{fig:counter_weight}
\end{figure}

We now claim that the law of the MSA depends on the specific distribution of the weights as well. It turns out that a cycle graph is not complex enough to make this distinction, it can be shown actually that in any cycle graph, the distribution of the MSA is independent of the law (but we do not pursue this here). To witness the distinction, consider \Cref{fig:counter_weight1}.
\begin{figure}[ht]
\centering
\includegraphics[width = 0.5\textwidth]{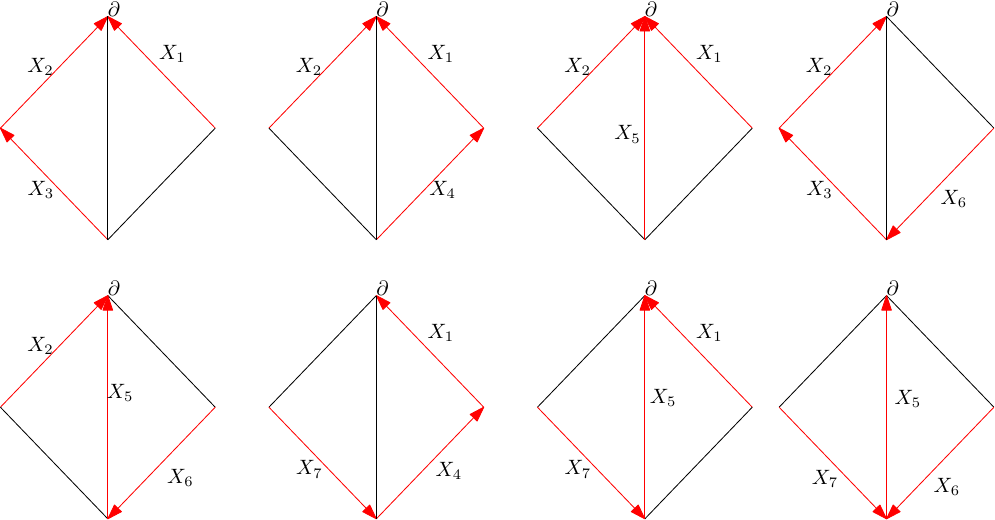}
\caption{All the spanning arborescences are marked red.}\label{fig:counter_weight1}
\end{figure}

\begin{proposition}
The distribution of the MSA with i.i.d.\ Exponential $(1)$ weights and with i.i.d.\ Unif $[0,1]$ weights are different on the graph of \Cref{fig:counter_weight1}.\end{proposition}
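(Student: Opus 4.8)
The plan is to compute, for the explicit finite graph of \Cref{fig:counter_weight1}, the probability that a suitably chosen spanning arborescence is the MSA, first under i.i.d.\ Exponential $(1)$ weights and then under i.i.d.\ Uniform $[0,1]$ weights, and to exhibit a single arborescence for which these two numbers disagree. Since the MSA is a.s.\ a well-defined element of the finite set $\{A_1, \dots, A_m\}$ of spanning arborescences of $(G, \partial)$, its law is a probability measure on this finite set, and the two laws coincide if and only if they assign the same probability to each $A_i$. As these probabilities sum to $1$ under each weight distribution, it suffices to produce one arborescence $A^*$ with $\P_{\mathrm{Exp}}(\mathrm{MSA} = A^*) \neq \P_{\mathrm{Unif}}(\mathrm{MSA} = A^*)$.

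First I would enumerate the spanning arborescences directly from \Cref{fig:counter_weight1}: each non-boundary vertex chooses one outgoing edge, subject to acyclicity, so the list is short and can be read off. Writing $W(A) = \sum_{\vec e \in A} U_{\vec e}$ for the total weight, the event $\{\mathrm{MSA} = A^*\}$ is $\bigcap_{j \neq *} \{W(A^*) < W(A_j)\}$. Cancelling the edges common to $A^*$ and $A_j$ in each inequality reduces this to a system of linear inequalities involving only the handful of weights on which the arborescences actually differ; geometrically this is a polyhedral cone $R \subset \mathbb{R}_{\ge 0}^{\vec E}$.

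The computation then splits into the two distributions. Under Exponential $(1)$ weights I would evaluate $\P(U \in R)$ using the memorylessness of the exponential law: concretely, the identity $\E e^{-U} = 1/2$, the fact that a minimum of independent exponentials is again exponential, and that a difference of two i.i.d.\ exponentials has a Laplace law, all of which make the relevant iterated integrals elementary. Under Uniform $[0,1]$ weights the same event is a volume computation, $\P(U \in R) = \Leb(R \cap [0,1]^{\vec E})$, evaluated by iterated integration against the triangular and piecewise-polynomial densities of the relevant sums, minima and differences. Comparing the two closed-form rational answers for $A^*$ shows they are unequal, which proves the claim.

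The main obstacle is choosing the right $A^*$. As one checks on simpler graphs, many arborescence events are in fact \emph{rank-determined}, meaning the winner depends only on the relative order of the weights and not their values, and such events have distribution-free probability; likewise events that reduce to comparing balanced sums, with equally many weights on each side, tend to agree for these two laws by exchangeability. The point of the graph in \Cref{fig:counter_weight1} is that for a particular $A^*$ the defining cone $R$ genuinely compares quantities of different multiplicity, for instance a single weight against the minimum of two competing ones, equivalently unequal spacings between order statistics, and it is precisely here that the Exponential and Uniform laws part ways; the remaining work is then the honest evaluation of the two integrals.
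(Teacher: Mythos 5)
Your strategy is exactly the paper's: fix one spanning arborescence $A^*$ of the graph in \Cref{fig:counter_weight1}, write $\{\mathrm{MSA}=A^*\}$ as a finite system of linear inequalities in the weights (after cancelling common edges), evaluate its probability under each law, and exhibit a discrepancy. The framework you set up is sound, including the reduction to a single arborescence and the observation that independent, distribution-free factors (such as the comparison the paper isolates as $\{X_1<X_6\}$) can be split off.

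The genuine gap is that you never carry out the computation, and for this proposition the computation \emph{is} the proof. The statement to be proved is precisely that for this particular graph the two closed-form answers disagree; asserting "comparing the two closed-form rational answers for $A^*$ shows they are unequal" assumes the conclusion. This is not a formality you can wave through: as you yourself note (and as the paper points out for cycle graphs), there are graphs on which every such polyhedral event has the same probability under all continuous i.i.d.\ laws, so inequality must be verified, not presumed. Moreover, the verification requires knowing the actual graph and selecting the right $A^*$, which you identify as "the main obstacle" but do not resolve. The paper resolves it concretely: for its chosen arborescence the event is $\{X_3<X_4,\,X_3<X_5,\,X_1<X_6,\,X_2+X_3<X_4+X_7,\,X_2+X_3<X_5+X_7\}$, and after discarding the independent factor $\{X_1<X_6\}$ the remaining probability is computed via memorylessness to be $1/9$ in the exponential case, while the uniform case reduces (by the symmetry of $X_4,X_5$) to an explicit double integral evaluating to $7/60\neq 1/9$. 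Until you produce such a pair of unequal numbers for a specific $A^*$, the proposition remains unproven; everything before that point is setup that both you and the paper share.
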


\begin{proof}
Let us calculate the probability that the top left figure in \Cref{fig:counter_weight1} is the MSA. This leads to the following probability computation (ignoring the redundant inequalities):
\begin{equation}
\P(X_3<X_4,X_3<X_5,X_1<X_6,X_2+X_3 < X_4+X_7,X_2+X_3<X_5+X_7) \label{eq:the_prob}
\end{equation}
We can immediately get rid of the $\{X_1<X_6\}$ part since it is independent of the rest, this contributes a factor of $1/2$ irrespective of the distribution. 

Let us now assume the distributions are Exponential $(1)$. Condition on $X_3<X_4 \wedge X_5$. Writing $X_4 = X_3+Z_4$ and  $X_5 = X_3+Z_5$ and using the memoryless property of exponential, we arrive at
\begin{multline*}
\P(X_2<X_7+Z_4,X_2<X_7+Z_5)\P(X_3 < X_4 \wedge X_5)   \\= \frac13(1-2\P(X_2>X_7+Z_4,X_2<X_7+Z_5) - \P(X_2>X_7+Z_4,X_2>X_7+Z_5))
\end{multline*}
where $Z_4,Z_5$ are again i.i.d.\ Exponential $(1)$ independent of the $X_i$s.
To compute the above probabilities, condition on $\{X_2>X_7\}$. Again writing memoryless property of Exponential and writing $X_2 = X_7+Z_2$,
\begin{align*}
\P(X_2>X_7+Z_4,X_2<X_7+Z_5) = \P(Z_4<Z_2<Z_5) & = \frac16.\\
\P(X_2>X_7+Z_4,X_2>X_7+Z_5)) = \P(Z_2 > Z_4 \vee Z_5) = \frac13.
\end{align*}
Overall, we get that \eqref{eq:the_prob} is $1/9$.

For Uniforms, by symmetry of $X_4,X_5$ we first reduce \eqref{eq:the_prob} to
$$
\P(X_3<X_4<X_5,X_2+X_3 < X_4+X_7).
$$
Next, we condition on $X_3 = u, X_4 = v$ and use the fact that for $a \in [0,1]$, $\P(Z<W+a) = a+ (1-a^2)/2$ for $Z,W \sim$ i.i.d.\ Uniform$[0,1]$ to reduce the above probability to the double integral
\begin{equation}
\int_{0<u<v<1} \left((v-u) + \frac12(1-(v-u)^2 )\right)(1-v)dudv.
\end{equation}
This is easily computed to be equal to $\frac7{60} >\frac19$.
\end{proof}
We finish this section by mentioning the remarkable result due to Frieze who prove that the total weight of the minimum spanning tree in the complete graph is approximately $\zeta(3)$  for a general class of distributions \cite{Frieze_MST_weight}. Later with Tkocz \cite{FT_complete_cost}, he proved that in the case of MSA with powers of uniform weights, this quantity is approximately 1.

\section{The Chu-Liu-Edmonds-Bock (CLEB) algorithm}\label{sec:ECL_finite}

We extensively use the Chu-Liu-Edmonds-Bock (CLEB) algorithm to sample the MSA, which is independently attributed to Chu and Liu \cite{Chu-Liu}, Edmonds \cite{Edmonds} and Bock \cite{bock} (see also Karp \cite{Karp}). We describe the original algorithm first and then describe certain variants which are useful for this paper. 

In what follows we need to work with a directed multi-graph, which we define precisely now. A directed multigraph is a collection of vertices $V$ and a collection of oriented edges $\vec E$ where the tail and the head of each $\vec e \in \vec E$ are in $V$. There could be multiple edges with the same head and tail (which we call \textbf{parallel edges}). We further assume that no oriented edge has the same head and tail (i.e., there are no self-loops). An \textbf{oriented path} is a sequence of distinct oriented edges $(\vec e_0, \vec e_1, \ldots, \vec e_{k})$ such that the tail of $\vec e_j$ is the head of $\vec e_{j+1}$ for $0 \le j \le k-1 $, and such an oriented path is a \textbf{cycle} if the tail of $\vec e_k$ is the head of $\vec e_0$. The endpoints of these edges are called the vertices of the path or cycle.

Before describing the CLEB algorithm, let us define two graph operations that will be used throughout the paper.

\begin{itemize}

\item {\textbf{ Contraction.}} Given a directed multigraph $G = (V, \vec E)$ with boundary $\partial$ and a cycle $C$ in $G$ not passing through $\partial$, this operation {\bf contracts  the cycle $C$} and outputs another directed multigraph $G' = (V', \vec E')$ with boundary $ \partial$, which we will denote by
\[ \Psic ((G, \partial), C) = (G', \partial). \]
To obtain $G'$ from $G$, we remove all vertices in $C$ and add a new vertex $v_C$ (which we can think of as the ``contracted vertex" formed by identifying all vertices in $C$ with $v_C$) while keeping the rest of the vertices unchanged (including the boundary $\partial$). In $G'$, we retain all the oriented edges of $G$ whose both endpoints are outside $C$. We then remove all the oriented edges with both endpoints in $C$. Finally, for every oriented edge in $G$ with exactly one endpoint in $C$, we replace it with the following oriented edge in $G'$.  Every edge  in $G$ with head (resp.\ tail) $u \not \in C $ and the tail (resp.\ head) in $C$,  replace it by an edge $\vec e$ in $G'$  with $\vec e_+ = u$ (resp.\ $\vec e_- = u$) and $\vec e_- = v_C$ (resp.\ $\vec e_+ = v_C$).

We remark the removal of all the edges with both endpoints in $C$ ensures that there is no self-loop in $G'$.
Let us also remark there is a natural bijection between $\vec E'$  and the edges in $\vec E$ with at most one endpoint in $C$, which yields an embedding of $\vec E'$ inside $\vec E$.   Hence we can (and will) think of the elements $\vec E'$ as elements of $\vec E$ as well. However, the tail and head of an oriented edge in $\vec E'$  may be different from those of its image in $\vec E$. Also, given any collection of edges $F \subseteq \vec E$, each edge from $F$ whose both endpoints do not belong to the vertices of $C$ can be uniquely mapped to an edge in $\vec E'$. We will denote the resulting collection of edges in $\vec E'$  obtained from this mapping by $F \cap \vec E'$. 

\begin{figure}[ht]
    \centering
    \includegraphics[scale = 0.7]{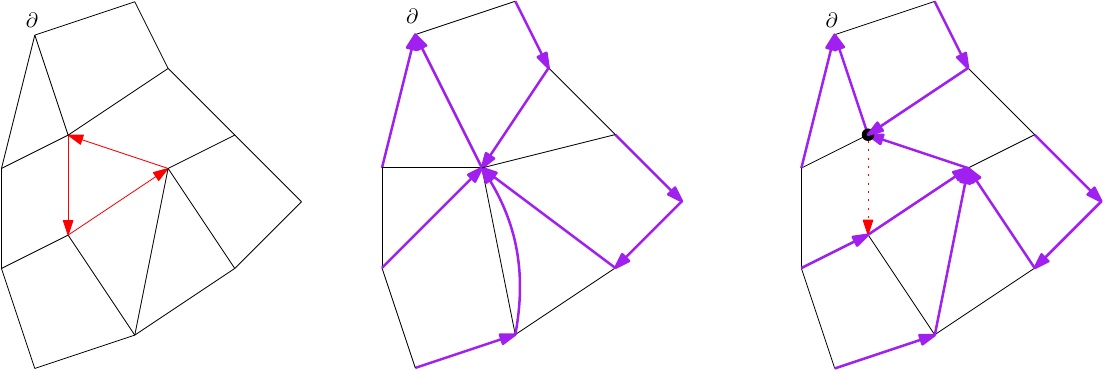}
    \caption{The contraction and uncontraction procedure. Left: The graph $G$ and the cycle $C$ is marked in red. Centre: $(G', \partial) = \Psic ((G, \partial), C)$ with a spanning arborescence $T'$ in purple. Right: The graph $G$ with spanning arborescence $T:=\Psiuc((G, \partial), (G', \partial), C,T')$. The edge in $C$ which is deleted is dotted.}
    \label{fig:enter-label}
\end{figure}

\item {\textbf{ Uncontraction.}} Suppose we have contracted a cycle $C$ in $(G, \partial)$ to obtain $(G', \partial)$, i.e., 
$\Psic ((G, \partial), C) = (G', \partial)$.  Let $T'$ be a spanning arborescence $(G', \partial)$. Given this data, the uncontraction operation yields a spanning arborescence $T$ in  $(G, \partial)$ by  {\bf uncontracting  the cycle $C$}, which we will denote by 
\[ \Psiuc ((G, \partial), (G', \partial), C, T') = T. \]

Let $T^\star \subseteq \vec E$ be the union of the edges in $C$ and the image of the edges of $T'$ in $G$. Note that $T^\star$   almost forms a spanning arborescence in $(G, \partial)$ except for one blemish. There is a unique vertex of $C$ which has exactly two outgoing edges from $T^\star$, one in $C$, say $\vec e_C$, and the other in the image of the edges in $T'$, say $\vec e_{T'}$. Define $T = T^\star \setminus \vec e_C$. It is easy to see that $T$ is a spanning arborescence of $(G, \partial)$.

\end{itemize}

\subsection{CLEB algorithm (and variants) in finite graphs}\label{sec:CLEB_finite}

  Suppose we are given a finite, connected directed multi-graph $G= (V,\vec E)$, a collection of non-negative weights $(U_{\vec e})_{\vec e \in \vec E}$, and a boundary vertex $\partial$.  

For any countable set $N$, we say a collection of weights $(W_x)_{x \in N}$ is \textbf{generic} if 
\begin{equation}
\sum_{x \in S}  n_{x} W_{x} \neq 0, \text{ for all finite $S \subset N$,} \label{eq:linear_comb}
\end{equation}
and any choice of coefficients $n_{x} \in \Z$, $x \in S$ such that $n_x \neq 0$ for some $x \in S$.
Although later we work with random weights, for now, we only assume that the collection of weights $(U_{\vec e})_{\vec e \in \vec E}$ is generic. 
Clearly, if the weight collection is i.i.d.\  from a continuous distribution, then it is generic. 
It is also clear that for a fixed boundary vertex $\partial$, a generic collection of weights produces a unique MSA for $(G, \partial)$.
\begin{figure}[t]
\centering
\includegraphics[width = 0.8\textwidth]{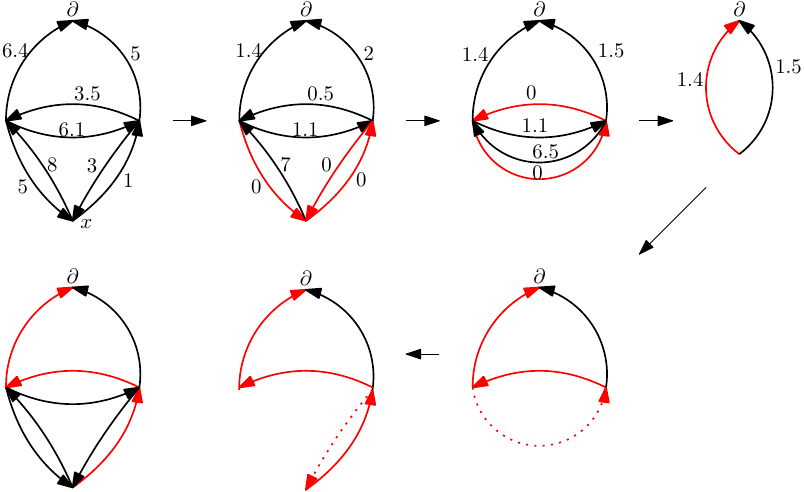}
\caption{The CLEB algorithm. The final output is in the bottom left figure.}\label{fig:CLEB_algo}
\end{figure}
\paragraph{The original CLEB algorithm}
We are given as input a graph $G = (V, \vec E)$, a boundary vertex $\partial$, and a collection of generic non-negative weights $(U_{\vec e})_{\vec e \in \vec E}$. Our goal is to output the MSA $T$ of $(G, \partial)$. We recommend referring to \Cref{fig:CLEB_algo} while reading the following description.
\medskip

 {\em Forward direction: Contracting cycles iteratively}.  Let us inductively define the sequence 
$$
J_i := (V_i, \vec E_i, Y_i, (U_{i, \vec e})_{\vec e \in \vec E_i})_{i \ge 0},
$$
where $G_i := (V_i, \vec E_i)$ is a directed multigraph  with weights $(U_{i, \vec e})_{\vec e \in \vec E_i}$ and  $Y_i  \subset \vec E$ is an increasing family  of oriented edges.  Let $\cO(v)$ denote the set of outgoing edges from $v$, i.e., $\cO(v) = \{\vec e: \vec e_- = v\}$. We start with $G_0 = G,Y_0 = \emptyset$ and $U_{0, \vec e} = U_{\vec e}$ for all $\vec e \in \vec E$.

\begin{enumerate}[(a)]
\item  For $i=1$,   take $V_1 = V$ and  $\vec E_1 = \vec E$.  For every $v \neq \partial$ in $G$, let $\pi_v$ denote $\min_{\vec e \in \cO(v)} U_{\vec e}$. Define the weights $U_{1,\vec e} := U_{\vec e} - \pi_{\vec e_-}$ for all $\vec e$ such that $\vec e_- \neq \partial$. Note that since all weights are generic, and in particular distinct, every vertex except $\partial$ has a unique zero-weight outgoing edge. Let $Y_1$ denote this collection. This completes the definition of $J_1$. 

If the zero-weight edges of $G_1$, or equivalently the edges in $Y_1$, have no cycles, we stop.
\item Otherwise, suppose we have defined $J_{i-1}$ for some $i \ge 2$ and 
there is at least one cycle consisting of zero-weight edges in the multigraph $G_{i-1}= (V_{i-1}, \vec E_{i-1})$ with weights $(U_{i-1, \vec e})_{\vec e \in \vec E_{i-1}}$.

Select one such cycle $C$ {\em arbitrarily}. Contract the cycle $C$ to obtain $G_i = (V_i, \vec E_i)$, that is,  $\Psic((G_{i-1}, \partial), C )= (G_{i}, \partial)$.
Let $v_C$ be the contracted vertex in $V_i$.
We update the weights as follows. Let $\pi_{v_C} =\min_{\vec e \in \cO(v_C)} U_{i-1,\vec e}$  and let $U_{i,\vec e} = U_{i-1,\vec e}  - \pi_{v_C}$ for all $\vec e \in \cO( v_C)$ and $U_{i, \vec e} = U_{i-1,\vec e}$ otherwise. So, we obtain a new multigraph    $G_i =(V_{i}, \vec E_{i})$ with weights $(U_{i, \vec e})_{\vec e \in \vec E_{i}}$.

As remarked before, we can view $\vec E_i$ as a subset of  $\vec E_{i-1}$.  In particular, we can view $\vec E_i$ as a  subset of $\vec E$ as well.

\item Let  $\vec e \in \vec E_i$ be the unique zero-weight edge outgoing from $v_C$ in $(V_i, \vec E_i, (U_{i, \vec e})_{\vec e \in \vec E_i})$. In other words, $\vec e$ is the unique outgoing edge from $v_C$ with minimum $U_{i-1}$-weight. Let $Y_i = Y_{i-1} \cup \{ \vec e\}$, where we abuse notation by identifying $\vec e \in \vec E_i$ by its unique image in $\vec E$. 
\item We terminate at time step $\tau$ when we are left with no zero-weight cycle in $G_\tau=(V_\tau, \vec E_\tau)$ with weights $(U_{\tau, \vec e})_{\vec e \in \vec E_\tau}$. Note that $\tau$ is necessarily finite since the size of $V_i$ is strictly decreasing. 
\end{enumerate}
We note that at every step $i$, the collection of weights $(U_{i, \vec e})_{\vec e \in \vec E_i}$ is generic and 
each vertex of $G_i$ except $\partial$  has exactly one outgoing edge with zero $U_i$-weight.\\

 {\em Backward direction: Uncontracting cycles iteratively}.  Once the algorithm terminates, let $T_\tau$ be the set of zero-weight edges in $G_\tau$. Since each vertex barring $\partial$ has exactly one outgoing edge
with zero $U_\tau$-weight and since $T_\tau$ does not contain any cycle by definition, $T_\tau$ is a spanning arborescence of $(G_\tau, \partial)$.  

We then inductively find the spanning arborescence  $T_i$ of $(G_i, \partial)$ for $i = \tau, \tau - 1, \ldots, 1$ by `uncontracting' the cycles one at a time chosen in the backward direction. More precisely, if we perform the contraction $\Psic((G_{i-1}, \partial), C_{i-1} )= (G_{i}, \partial)$ in the $i$-th time step in the forward direction, then we obtain  the spanning arborescence $ T_{i-1} =  \Psiuc ((G_{i-1}, \partial), (G_i, \partial), C_{i-1}, T_i)$ from $T_i$ while uncontracting $C_{i-1}$.  

Finally, we output $T^* = T_1$, which will be a spanning arborescence of $(G, \partial)$.

\begin{proof}[Proof that the original CLEB algorithm produces the MSA]  We claim $T^*$ to be the MSA of $(G, \partial)$. A proof can be found in  \cite{bock,Edmonds,Chu-Liu} or \cite{Karp}, but for the sake of completeness we provide a quick proof here.
In fact, we claim that at each step,  the spanning arborescence $T_i$ is the MSA of $(G_i, \partial)$ with weights $(U_{i, \vec e})_{\vec e \in \vec E_{i}}$.
The proof proceeds by induction on the step number $i $, starting from $\tau$ and running backward in time. Note $T_{\tau}$ is the MSA for $(G_\tau, \partial)$ with weights $(U_{\tau, \vec e})_{\vec e \in \vec E_{\tau}}$  since $T_\tau$ has weight 0 and any edge not in $T_\tau$ has strictly positive weight by the generic assumption. So the base case is true.

Assume now that $T_{i}$ is the MSA for $(G_{i}, \partial)$ with weights $(U_{i,\vec e})_{\vec e \in \vec E_i}$. 
 For a spanning arborescence $T$ of $G = (V, \vec E)$ with the boundary $\partial$ and the weights $(U_{\vec e})_{\vec e \in \vec E}$, let $w(G, (U_{\vec e})_{\vec e \in \vec E}, T)$
denote the weight of $T$.  Let  $T^*_{i-1}$ be the MSA for $G_{i-1}$ with weights $(U_{i-1, \vec e})_{\vec e \in \vec E_{i}}$ and let $C_{i-1}$ be the cycle in $G_{i-1}$ which we contracted  to obtain $G_{i}$.  Recall the  natural embedding of $\vec E_{i}$ into $\vec E_{i-1}$ and let $S$ be the collection of edges in $\vec E_{i}$ whose images in  $\vec E_{i-1}$ belong to $T^*_{i-1}$. Note that $S$ consists of oriented edges such that every non-boundary vertex except $v_{C_{i-1}}$ has exactly one outgoing edge, while $v_{C_{i-1}}$ may potentially have several outgoing edges. Observe  that in $G_{i-1}$, there is at least one vertex $v$ in $C$ with an oriented path in $T^*_{i-1}$ starting from $v$, ending in $\partial$ and containing no other vertex from $C$. For example, we can take the path in $T^*_{i-1}$ started from any vertex in $C$ to $\partial$, which might hit $C$ several times, and take the portion of this path from the last hit of $C$ to $\partial$. Take the outgoing edge from one such $v$ in $T^*_{i-1}$, keep it (or rather its copy in $G_{i}$) in $S$ and remove every other outgoing edge from $v_C$ which came from $T^*_{i-1}$. Call the resulting collection of edges $T'$, which can easily be seen to be a spanning arborescence of $G_{i}$. Moreover, 
\begin{equation}\label{eq:wt_bd1}
 w(G_{i-1}, (U_{i-1, \vec e})_{\vec e \in \vec E_{i-1}}, T^*_{i-1}) \ge w(G_i, (U_{i-1,\vec e})_{\vec e \in \vec E_i}, T')  \ge w(G_i, (U_{i-1,\vec e})_{\vec e \in \vec E_i}, T_i), 
 \end{equation}
since  $T'$ is constructed from $T^*_{i-1}$ by potentially deleting some outgoing edges from $v_{C_{i-1}}$ in $G_i$. The second inequality follows from the facts that $T_i$ is the MSA for $(G_{i}, \partial)$ with weights $(U_{i,\vec e})_{\vec e \in \vec E_i}$ and that subtracting the same quantity from the weights of all outgoing edges of a vertex does not change the MSA. 

Now observe that since $T_{i-1}$ is obtained from $T_i$  by uncontracting the cycle $C_{i-1}$, the extra edges in $T_{i-1}$  belong to the cycle $C_{i-1}$, which have zero $U_{i-1}$-weights. Therefore, 
\begin{equation}\label{eq:wt_bd2}
 w(G_{i}, (U_{i-1, \vec e})_{\vec e \in \vec E_{i}}, T_i ) = w(G_{i-1}, (U_{i-1,\vec e})_{\vec e \in \vec E_{i-1}}, T_{i-1})  \ge w(G_{i-1}, (U_{i-1,\vec e})_{\vec e \in \vec E_{i-1}}, T^*_{i-1}). 
  \end{equation}
  Combining \eqref{eq:wt_bd1} and \eqref{eq:wt_bd2} yields equality everywhere and by the uniqueness of the MSA for generic weights, it follows that $T_{i-1} = T^*_{i-1}$.   Thus the induction step is complete and so is the proof.
\end{proof}

\bigskip

Note that in the original CLEB algorithm, in each iterative step, we choose to reveal the minimum weight outgoing edges from all the vertices at once, and the choice of the cycle we contract is arbitrary. Instead,  we can choose the vertices one by one from which to reveal the minimum weight edge in any manner we like, and contract a cycle whenever they are created. This idea was originally proposed by Tarjan \cite{tarjan}.   This sequential revealment of edges has certain advantages and it bears similarities with the celebrated  Wilson's algorithm that is used to study uniform spanning trees (more on this in \Cref{sec:wilson}). 

\paragraph{The sequential CLEB algorithm.}  {\em Forward direction.} Let us inductively define a sequence $$H_j := (V_j, \vec E_j, S_j, (U_{j,\vec e})_{\vec e \in \vec E_j})_{j \ge 0}$$ 
where $G_j:= (V_j, \vec E_j)$ is a directed multigraph with weights $(U_{j,\vec e})_{\vec e \in \vec E_j}$ and $S_j$ is a subset of $ \vec E$.  We shall call $S_j$ the set of \textbf{exposed edges} by the algorithm at step $j$. For $j = 0$, we take $G_0 = G$, $S_0 = \emptyset$ and $U_{0,\vec e}$ = $U_{\vec e}$ for $\vec e \in \vec E_0 = \vec E$. 

The definition will be such that $G_{j}$ either is the same as $G_{j-1}$ or is obtained by contracting a single cycle of $G_{j-1}$. Therefore, we can view $\vec E_j \subseteq \vec E_{j-1} \subseteq \vec E$ and $S_j \cap \vec E_j$ as collection of edges in $\vec E_j$.  
At every step $j$, $S_j \cap \vec E_j$ is an arborescence of $G_j$, which is clearly true for $j=0$.   To carry out the inductive definition, we assume that we have generated $H_0, H_1, \ldots, H_{j-1}$ for $j \ge 1$. 

\begin{enumerate}[(a)]
\item {\textsf {(Choosing a vertex.)}} In step $j$, pick a vertex $v$ in $V_{j-1}$ which has no outgoing edge in $S_{j-1} \cap \vec E_{j-1}$. The choice of this vertex may be a deterministic function of $(H_{k})_{k \le j-1}$ and possibly some additional independent source of randomness. In any case, note that $v$ is the boundary vertex of the component of the arborescence $S_{j-1} \cap \vec E_{j-1}$ it is in. If there is no such vertex $v$, stop the algorithm and go to the last step.

\item {\textsf {(Revealing the minimum outgoing edge of that vertex.)}}  Let $\pi_v$ be the minimum weight among the outgoing edges of $v$. Let $U'_{j-1,\vec f} = U_{j-1,\vec f} - \pi_v$ for all $\vec f \in \vec E_{j-1}$ with $\vec f_- = v$ and $U'_{j-1,\vec f} = U_{j-1,\vec f}$ for all other $\vec f \in \vec E_{j-1}$. Let $\vec e \in E_{j-1}$ be the unique oriented edge out of $v$ with zero $U'$-weight, and define $S_j =  S_{j-1} \cup \vec e$. By the inductive assumptions on $ S_{j-1} \cap \vec E_{j-1}$, $S_j  \cap \vec E_{j-1}$ either is a disjoint union of arborescence or contains a single cycle. The first case occurs when the endpoints of $\vec e$ belong to different components of the arborescence $S_{j-1} \cap \vec E_{j-1}$, and the second case occurs when they belong to the same component.

\item  If $S_j  \cap \vec E_{j-1}$ has no cycle, declare $V_{j} = V_{j-1}$, $\vec E_{j} = \vec  E_{j-1} $ and $U_{j,\vec e} = U'_{j-1,\vec e}$ for all $\vec e \in \vec E_j$.  Note that  $S_j \cap \vec E_j$ is trivially an arborescence since 
we are joining two components of $S_{j-1} \cap\vec E_{j-1}$ into a new arborescence $S_j \cap \vec E_j$.

\item If $S_j  \cap \vec E_{j-1}$ has a cycle $C$ (which must be unique), contract the cycle $C$ in $G_{j-1}$  to obtain $G_j = (V_j, \vec E_j)$, that is, $\Psic((G_{j-1}, \partial), C )= (G_{j}, \partial)$.
Then define the new weights as $U_{j,\vec e} = U'_{j-1,\vec e}$ for all $\vec e \in \vec E_j$. Since we remove the cycle from $S_j  \cap \vec E_{j-1}$ via contraction,  $S_j \cap \vec E_j$ becomes again an arborescence.

\end{enumerate}
 {\em Uncontracting cycles iteratively}. Once the algorithm stops, say at step $\vartheta$, the arborescence  $S_\vartheta \cap \vec E_\vartheta$ contains an outgoing edge of each vertex in $V_{\vartheta}$. Hence, it is a spanning arborescence of $(G_\vartheta, \partial)$.  During the algorithm, we also obtain an ordered sequence of cycles $C_0,  C_1, \ldots, C_{k-1}$ which are contracted  (step (d)) at time steps $1 \le i_1 < i_2  < \cdots < i_k \le \vartheta$, i.e., $\Psic((G_{i_{\ell}-1}, \partial), C_{\ell-1} )= (G_{i_\ell}, \partial)$ where  $i_0=0$. We iteratively uncontract the cycles in reverse order in the same way as in the original CLEB algorithm to obtain the spanning arborescences $T_\ell$ of $(G_{i_\ell}, \partial)$ for $\ell = k, k-1, \ldots, 0$. More precisely, we set  $T_k = S_\vartheta \cap \vec E_\vartheta$ and $ T_{\ell -1} =  \Psiuc ((G_{i_{\ell}-1}, \partial), (G_{i_\ell}, \partial), C_{\ell -1}, T_{\ell})$. 
  
Finally, we output $T^* = T_0$, which will be a spanning arborescence of $(G, \partial)$.

\begin{proof}[Proof that the sequential CLEB algorithm produces the MSA]
Define $T_k= S_\vartheta \cap \vec E_\vartheta$ as above, which is a spanning arborescence of $G_\vartheta$.  Since the $U_{\vartheta}$-weight of every edge belonging to  $T_k$  is zero, it is the MSA of $G_\vartheta$ with weights $(U_{\vartheta, \vec e})_{\vec  e \in \vec E_{\vartheta} }$.  By induction, assume that we have computed $T_\ell$,  the MSA  in $G_{i_\ell}$ with weights $U_{i_\ell}$. Define $ T_{\ell -1} =  \Psiuc ((G_{i_{\ell}-1}, \partial), (G_{i_\ell}, \partial), C_{\ell -1}, T_{\ell})$, which is a spanning arborescence.  Since the $U_{i_{\ell}-1}$-weights of the edges of $C_{\ell-1}$ are all zero, the same argument as used in the proof of the validity of the original CLEB algorithm yields that $T_{\ell - 1}$ is the MSA of $G_{i_{\ell}-1}$. For each $j = i_{\ell}-1, i_{\ell}-2, \ldots, i_{\ell-1} +1,  i_{\ell-1}$,  the spanning arborescence $T_{\ell - 1}$ remains the MSA of $G_j = G_{i_{\ell}-1}$ with weights 
$(U_{j, \vec e})_{ \vec e \in \vec E_j}$ since $U_j$-weights can be obtained from $U_{j+1}$-weights by adding the same quantity from all outgoing edges of a certain vertex. This proves the induction hypothesis. 
\end{proof}

\bigskip
We emphasize that in the sequential CLEB algorithm,  the choice of the vertex in step (b) can depend on the edges revealed up to the previous step, and may involve additional randomness as well. This property will be exploited heavily. For future reference, we now define a useful choice of ordering for the sequential CLEB which we will call the \textbf{CLEB walk algorithm} which will be reminiscent of Wilson's algorithm. This idea was first proposed by Gabow, Galil, Spencer and Tarjan \cite{gabow86}. As Wilson's algorithm performs successive loop erased random walks, \textbf{CLEB walk algorithm}  is executed by performing successive \textbf{CLEB walks}. The \textbf{CLEB walk}  can also be thought of as an analogue of the so-called \textbf{invasion percolation}  (see \Cref{sec:invasion}) which is widely used to study the unoriented minimal spanning tree. \\

Suppose we have a directed multigraph $(G, \partial)$ with boundary $\partial$ and with generic weights. The \textbf{CLEB walk} started at a vertex $x$ is generated by running the sequential CLEB algorithm on $(G, \partial)$  with a particular choice of vertices starting with $x$ until a certain stopping time $\upsilon$ which we describe below.  This gives us the process $H_0, H_1, \ldots, H_\upsilon$.

For time step $j=1$, we choose the vertex $v$ (in step (a)) to be $x$. Then inductively in step $j \ge 2$, we choose the vertex as follows: 
\begin{itemize}
\item if a cycle is not contracted in time step $j-1$, i.e., $G_j = G_{j-1}$,  choose the head of the edge in $(S_j \cap \vec E_{j-1})  \setminus (S_{j-1} \cap  \vec E_{j-1}) $. If this head is $\partial$, stop the process.
\item  Otherwise, if a cycle $C$ is contracted in $G_{j-1}$ to produce $G_j$, then choose the vertex $v_C$ which is the new vertex in $G_j$ obtained by contracting the vertices of $C$. 
\end{itemize}

It can be easily seen by induction that   $S_j \cap \vec E_j$ is a single oriented path in $G_j$. Call this path $P_j$ for future reference. Also, we call $S_\upsilon$ to be the set of exposed edges by the CLEB walk. \\


Let us now describe the \textbf{CLEB walk algorithm}.  Recall that for any $S \subseteq \vec E$, let $V(S)$ denote the set of endpoints of $S$.
\begin{itemize}
\item Let us enumerate the vertices $x_1,\ldots, x_n$ of $V \setminus \partial$  in some order. Generate a CLEB walk in $G = (V, \vec E)$ with boundary $\partial$ starting from $x_1$ and let $\Gamma_{x_1}$ be the set of exposed edges. Let $B_{1} = V(\Gamma_{x_1})$. Note that $\partial \in B_{1}$.
\item Having defined $B_{1}, \ldots, B_{j}$,  if $\cup_{1 \le m \le j} B_{m} = V$, stop the process. Otherwise, choose the vertex $x_{i}$ with the smallest index which is not in $\cup_{1 \le m \le j} B_{m}$. Perform the CLEB walk on $G$ 
with boundary $\cup_{ 1 \le m \le j }B_{m} $ starting from $x_i$. Set $B_{j+1}  = V(\Gamma_{x_{i}})$ for this CLEB walk.
\end{itemize}
After this, we uncontract the cycles as in the sequential CLEB algorithm to get a spanning arborescence.
Of course, since the sequential CLEB algorithm produces the MSA, so does the CLEB walk algorithm.

\begin{figure}
\centering
\includegraphics[scale = 0.7]{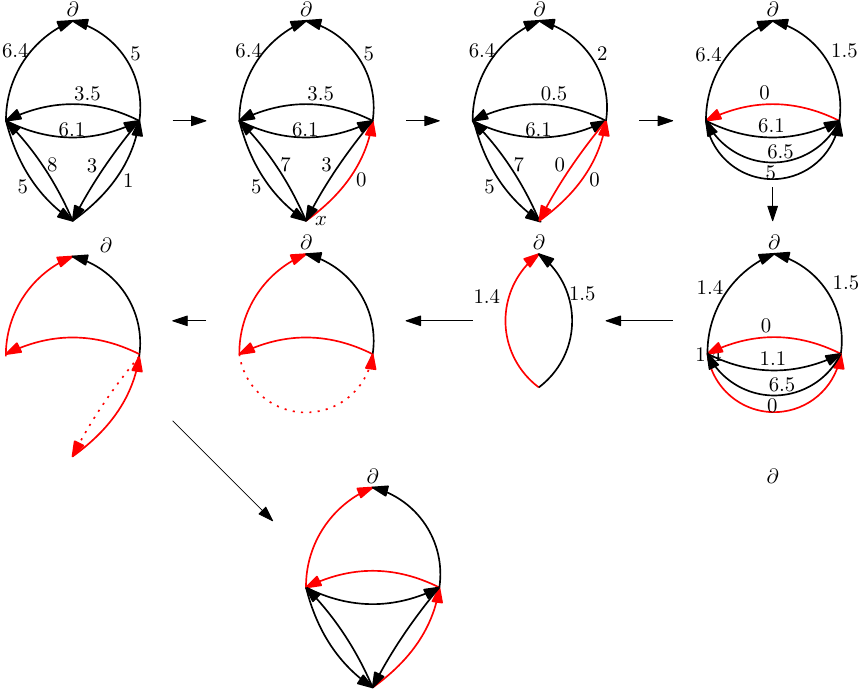}
\caption{CLEB walk started at $x$.}
\end{figure}


Observe that the set of edges exposed in the sequential CLEB algorithm (denoted by $\cup S_i$) is a superset of the MSA, and so is the set of edges exposed in the original CLEB algorithm (denoted by $\cup Y_i$). 
The goal of the next lemma is to prove that these set of edges is the same. This is again analogous to the loop soup representation of the erased loops in Wilson's algorithm.
This lemma will not be used for later results in this article, but given the fact that loop soups have found widespread use since their discovery, we record this result here for future reference. In fact, we will prove a stronger statement in the following \Cref{lem:order_invariance}.
Notice that every edge revealed in the original CLEB algorithm has a certain `level' which essentially
tells us the number of contractions which occurred to create its tail vertex. The same can be defined for the sequential CLEB algorithm.  We will use colors to mark these levels and  then show that the set of colored edges revealed in both algorithms is the same.

Let us now make this more precise.
Recall the notation $Y_i$ from the original CLEB algorithm. Let $F_i =Y_{i} \setminus Y_{i-1} $ be the new edge `exposed' in step $i$  and let $F_1 = Y_1$.  Clearly, $\cup_{i = 1}^\tau Y_i = \cup_{i =1}^\tau F_i$. 
Color edges in $F_1$ with color 1. For $i \ge 2$, note that $F_i$ is the smallest weight outgoing edge from the vertex $v_{C_{i-1}}$ in $G_i$  obtained by contracting a cycle $C_{i-1}$ in $G_{i-1}$. Color $F_i$ with color $k+1$ where $k$ is the largest color in the edges of the cycle $C_{i-1}$.

In the sequential CLEB algorithm, we can also color the edges of $S_j$ as follows. If the edge added in step (b) came out of a vertex in $V$, color it $1$. Otherwise, if it came out of a vertex $v_C$ formed by contracting a cycle $C$, color it $k+1$   where $k$ is the largest color in the edges of $C$.



\begin{lemma}[Equivalence of exposed edges in original and sequential CLEB]
\label{lem:order_invariance}
For generic weights, we have  
\[ \cup_{1 \le i \le \tau} F_i  = \cup_{1 \le j \le \vartheta} S_j \]
 as the set of colored, oriented edges. 

\end{lemma}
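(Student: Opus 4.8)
The plan is to recognize that both the original and the sequential CLEB algorithms are instances of a single \emph{abstract rewriting system} and to deduce the lemma from uniqueness of its terminal state. Declare a \emph{state} to consist of a laminar partition of $V$ into current super-vertices (the original atoms together with the blobs formed by contracted cycles), the current reduced weights on edges between distinct super-vertices, and the set of already-exposed oriented edges with their colors; the initial state is the atoms, the weights $(U_{\vec e})$, and no exposed edges. Allow two elementary moves. \textbf{Reveal}$(A)$: pick a non-$\partial$ super-vertex $A$ with no exposed outgoing edge, subtract $\pi_A := \min_{\vec e \in \cO(A)} U_{\vec e}$ (reduced weight) from all of $A$'s outgoing edges, and add the resulting unique zero-weight outgoing edge to the exposed set, colored $1$ if $A$ is an atom and $1+k$ if $A$ is a blob whose internal exposed edges have maximal color $k$. \textbf{Contract}$(C)$: if the exposed edges contain a cycle $C$, apply $\Psic$ to merge $V(C)$ into a new super-vertex $v_C$ with no exposed outgoing edge yet, transporting the reduced weights through the canonical embedding $\vec E' \hookrightarrow \vec E$. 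Step (a) of the original algorithm is a batch of Reveals on atoms and steps (b)--(c) a Contract followed by a Reveal on $v_C$; step (b) of the sequential algorithm is a Reveal and step (d) a Contract. Both algorithms run until no move applies, i.e.\ until every non-$\partial$ super-vertex has an exposed outgoing edge and the exposed edges are acyclic, so both are \emph{maximal} move sequences from the same initial state, and $\cup_i F_i$, $\cup_j S_j$ are precisely the exposed colored edge sets of their terminal states. It therefore suffices to show the terminal state is unique.

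The heart of the argument is the \emph{strong diamond property}: any two distinct applicable moves at a state commute to the identical state. There are three cases. Two Reveals at $A \neq B$ alter only the outgoing edges of $A$ and of $B$ respectively, which are disjoint edge sets, so they commute. A Reveal$(A)$ and a Contract$(C)$: since $A$ has no exposed outgoing edge while every vertex of $C$ does, $A \notin V(C)$; contracting $C$ only redirects the heads of some outgoing edges of $A$ to $v_C$ without changing their reduced weights, so $\pi_A$ and the revealed zero-edge are unchanged under the embedding, while revealing at $A$ leaves the edges of $C$ untouched — the two moves act on disjoint data and commute. Two Contracts of cycles $C, C'$: the invariant that each super-vertex has at most one exposed outgoing edge, which is preserved by both moves, forces distinct exposed cycles to be vertex-disjoint (a shared vertex would have two exposed out-edges), so they commute. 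In each case the two one-step successors are joined in one further step to a common state.

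To finish, note the system terminates: every Contract strictly decreases the number of super-vertices, and at most that many Reveals occur between consecutive Contracts. By Newman's lemma (local confluence together with termination imply confluence), the terminal state reachable from the initial state is unique, and in particular so is its exposed-colored-edge component. Since the colors are read off the state through the laminar structure recorded in the partition and the colors of the internal edges, the colored exposed edge sets produced by the two algorithms coincide, giving $\cup_{1 \le i \le \tau} F_i = \cup_{1 \le j \le \vartheta} S_j$ as required. Genericity of the weights is used throughout to guarantee that each Reveal has a unique minimizing outgoing edge.

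The step I expect to be the main obstacle is case (ii) of the diamond property. The delicate point is that a Reveal may itself create a new exposed cycle, so one must ensure this does not desynchronize the two orderings; this works precisely because the two single moves act on disjoint data and produce the \emph{identical} state, after which any newly created cycle is present in both branches and is cleared by the same subsequent Contracts. Setting up the state and the invariant ``at most one exposed outgoing edge per super-vertex'' so that it is preserved by both moves and forces vertex-disjointness of competing cycles is the crux that makes all the commutations clean; the remainder is bookkeeping about the identification $\vec E_j \subseteq \vec E$ under iterated contraction.
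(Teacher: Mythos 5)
Your proof is correct, but it takes a genuinely different route from the paper's. The paper proceeds by induction on the number of edges, with a strengthened hypothesis (prescribed initial colors $c_v$ and ``eligible'' weights that allow certain zero-weight edges): it locates the first cycle $C$ closed by the sequential algorithm, observes that $C$ consists of minimum-weight edges and hence occurs among the cycles contracted by the original algorithm, shows that the cycles contracted earlier by the original algorithm are vertex-disjoint from $P \cup C$, and then contracts $C$ in both runs to pass to a smaller graph. You instead package both algorithms as maximal move sequences in one abstract rewriting system (states = laminar partition, reduced weights, colored exposed edges; moves = Reveal and Contract) and deduce the equality from uniqueness of the normal form via the diamond property plus termination (Newman's lemma). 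The structural facts doing the work are the same in both arguments -- each super-vertex has at most one exposed outgoing edge, so distinct exposed cycles are vertex-disjoint and reveals/contractions at disjoint locations do not interfere -- but your formulation buys more: it shows that \emph{every} maximal interleaving of reveals and contractions (not just these two particular schedules) yields the same terminal state, including the same final contracted graph and reduced weights, which is exactly the ``order does not matter in a certain precise sense'' assertion the paper alludes to in the introduction; the paper's induction, by contrast, is more elementary and self-contained but must be re-run for any new schedule. One small point you should make explicit: your coloring rule assigns a revealed edge color $1+k$ where $k$ is the maximal color among \emph{all} internal exposed edges of the blob, whereas the paper colors by the maximal color on the \emph{defining cycle} $C$ only. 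These agree, but it needs a one-line induction: each sub-blob on $C$ contributes to $C$ its unique exposed outgoing edge, whose color strictly exceeds every color internal to that sub-blob, so the maximum over $C$ equals the maximum over all internal exposed edges. With that identification, and with genericity \eqref{eq:linear_comb} guaranteeing unique minimizers at every Reveal (reduced weights differ from original weights by integer combinations of weights, so no ties can arise), your argument is complete.
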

\begin{proof}
We will apply induction on the number of edges of the graph and we will prove a slightly general result. We first enrich the coloring scheme as follows: we assume that the color of the minimum weight edge out of each vertex in $V$ is some $c_v \in \N$ fixed from beforehand and then the colors of the remaining edges are defined using the same iterative procedure. We also allow some of the weights to be 0 but with the following restrictions: firstly, there can be at most one outgoing edge from a vertex with weight 0 and secondly, the non-zero weights of the oriented edges form a generic collection. Call this an \textbf{eligible} collection of weights. We prove that with this generalized version of the coloring scheme and weights, the result is true. Of course, once we have this, the lemma follows by choosing $c_v =1$ for all $v \in V$.

 For a graph with a single edge, the result is trivial. Now suppose that for all connected, simple graphs with $k$ edges and choice of colors $(c_v)_{v \in V}$ and a collection of eligible weights the result is true and now we prove the same result for a graph $G$ with $k+1$ edges. Let $\cC:= (C_1,\ldots,C_\tau)$ be the ordered sequence of cycles contracted in the original CLEB algorithm. Let $\cE = (\vec e_1,\vec e_2, \ldots, \vec e_\vartheta)$ be the sequence of oriented edges revealed in the sequential CLEB (observe that in the above notation $\cup_{1 \le i \le j} \vec e_i = S_j$ for all $j$). We also assume that as we expose edges from $\cE$, we also reveal their colors.

Let $i_1$ be the first index when $S_{i_1}$ has a cycle. We can write $S_{i_1} = P \cup C$ where $C$ is a cycle and $P$ is a path that ends at $C$ ($P$ could be empty). Contract the cycle $C$ into a vertex $v_C$ and for notational convenience, denote $G' = G_{i_1}$ and $U_{i_1}  = U'$ and simply observe that $U'$ is an eligible collection of weights. 

Since $C$ is a cycle consisting of minimum weight outgoing edges in $G$, it is present in $Y_1$ in the original CLEB algorithm, and hence it belongs to the sequence $\cC$. Let $C_k =C$ for some $1 \le k \le \tau$. Now the crucial observation is that none of $C_1,\ldots, C_{k-1}$ can have a common vertex with $P \cup C$. Indeed suppose for some $ j \le k$, $C_j$ is the first cycle which has a common vertex $w$ with $P \cup C$. Since in the original CLEB algorithm, $P \cup C \in Y_1$ and none of its edges is contracted before $C_j$, this can only mean $C_j = C_k = C$.

Now we will choose a sequence of edges to reveal for sequential CLEB and a  sequence of cycles to contract for original CLEB algorithms in $G'$ with weights $U'$ and apply the induction hypothesis. First note that $U'$ is an eligible collection of weights as all the edges in $P$ have weight 0. Choose colors $c'_v = c_v$ if $v \notin C$ and choose color $c_{v_C}$ to be $m+1$ if $m$ is the largest color in $C$.  For the sequential CLEB, we simply remove the oriented edges in $C$ from $\cE$ and call this new sequence $\cE'$. This is a valid choice for sequential CLEB as the edges in $P$ have $U'$-weight 0, and the edges chosen after $\vec e_{i_1}$ match with those in $\cE$ and $U_{i_1}  = U'$. For the original CLEB algorithm, observe that $P \cup C \in Y_1$ by definition. We now define a new order of cycles to contract in $G'$ with weights $U'$ by deleting the cycle $C$ from $\cC$. Observe that the latter is a valid choice since 
$\cup_{1 \le i \le k-1} C_i$ does not intersect $P \cup C$. By induction hypothesis, for these choices, the set of colored edges revealed in both algorithms match in $G'$. Thus the sets of colored edges match in $G$ for both the algorithms as the edges of $P \cup C$ are exposed and have the same color in both algorithms. 
\end{proof}

\subsection{CLEB walk and MSA}\label{sec:CLEB_walk}
Given a CLEB walk started from a vertex, what portion of the MSA can be extracted from it? This is the content of the next lemma. Recall that in Wilson's algorithm, the whole uniform spanning tree branch can be obtained by loop erasing the simple random walk started from $x$ until it hits $\partial$. We will prove an analogous result for the CLEB walk algorithm as well.

\begin{figure}[ht]
\centering
\includegraphics[scale = 0.6]{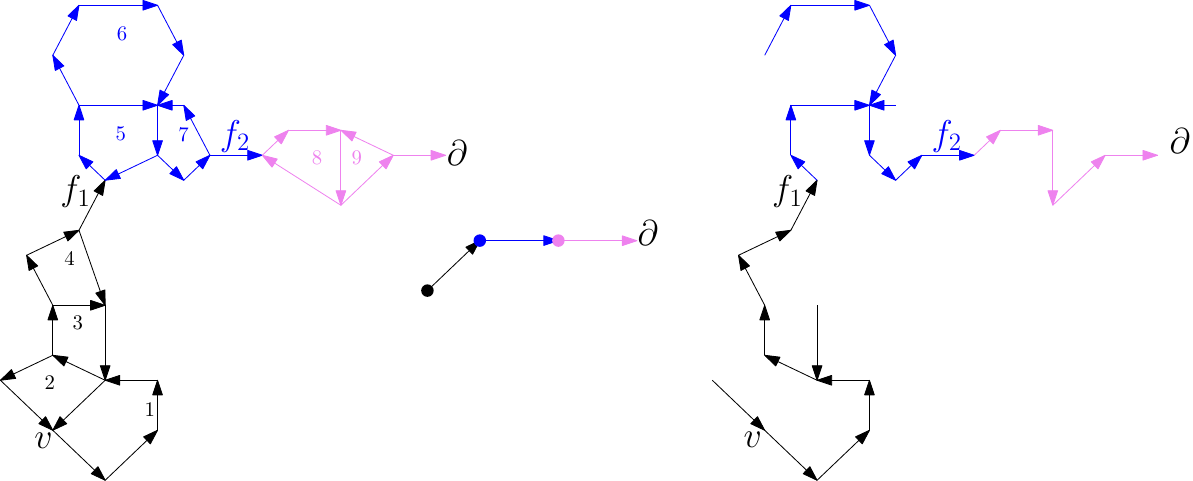}
\caption{Left: The black part is $S_{\tau_{N_1}+1}$, the blue part is  $S_{\tau_{N_2}+1} \setminus S_{\tau_{N_1}+1}$ and the violet part is $S_{\tau_{N_3}+1} \setminus S_{\tau_{N_2}+1}$. The sequence of loops contracted is numbered in order. Centre: The path $P_{\tau}$. Colors indicate the color of the edges which are contracted into a single vertex. Right: $E_{N_1}$ is shown in black, $E_{N_2}$ in blue and $E_{N_3}$ in violet. Together, they form $\Gamma_v$}\label{fig:S}
\end{figure}

Throughout the rest of this subsection, we assume that $G$ is a finite, connected, simple graph with a boundary vertex $\partial$ and $(w_{\vec e})_{\vec e \in \vec E}$ is a collection of generic weights.
Perform the CLEB walk started from $v$. Let $\tau_0=0$ and suppose $\tau_k$ be the $k$th time $S_{\tau_k } \cap \vec E_{\tau_k}$ is empty (i.e., a step when we contract every exposed edge) and let $N_1$ be the largest such $k$. In other words, the oriented edge $\vec f_1 := S_{\tau_{N_1}+1} \setminus S_{\tau_{N_1}}$ is never contracted. Let $ \cL_1$ denote the ordered sequence of loops contracted by time $\tau_{N_1}$. Now uncontract the loops of $\cL_1$ in reverse order starting from the single edge graph  $(V(\vec f_1), \vec f_1)$ with boundary $(\vec f_1)_+$  and $\vec f_1$ being its spanning arborescence.
Let $E_{N_1}$ be the spanning arborescence of the graph spanned by $V(S_{\tau_{N_1}+1})$ with boundary $(\vec f_1)_+$ obtained after the uncontraction process. 

We can now iterate this process. Inductively, let $\tau_{N_{j-1}+k}$ be the $k$th time after $N_{j-1}$ such that $$S_{\tau_{N_{j-1} + k}} \cap \vec E_{\tau_{N_{j-1}+k}} = S_{\tau_{N_{j-1}+1}} \cap \vec E_{\tau_{N_{j-1}+1}}.$$ Let $N_j$ be such that  $N_{j} - N_{j-1}$ be the largest such $k $. Let $ \vec f_j := S_{\tau_{N_j}+1} \setminus S_{\tau_{N_j}}$; and $E_{N_j}$ is the spanning arborescence of the graph spanned by $V(S_{\tau_{N_j}+1})$ with boundary $(\vec f_j)_+$ obtained by uncontracting the ordered set of cycles $\cL_j$ formed between steps $\tau_{N_{j-1}}$ and $\tau_{N_{j}}$ (starting with the single edge graph $(V(\vec f_j), \vec f_j)$ and its spanning arborescence $\vec f_j$). Let $\tau_\partial$ be the smallest $t$ such that the head of $f_t$ is $\partial$.  Let 
\[ \Gamma_v := {\textstyle \bigcup_{1 \le j \le \tau_\partial}}\big( E_{N_j} \cup \{ f_j\} \big)  \ \ \text{ and }  \ \ B_v = V(\Gamma_v).\]
 Observe that by definition, $\Gamma_v$ is a spanning arborescence of $(V(S_{N_{\tau_\partial}+1}), S_{N_{\tau_\partial}+1})$ with boundary~$\partial$.
\begin{lemma}\label{lem:recovery}
The following is true:
\begin{enumerate}
\item[(a)] The outgoing edge from $v$ in the MSA is in $S_{\tau_{N_1} +1}$,
\item[(b)] $\Gamma_v$ contains only one incoming edge into $\partial$, 
\item[(c)] every element of $\Gamma_v$ is in the MSA.
\end{enumerate}
 \end{lemma}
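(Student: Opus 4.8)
The plan is to realize the CLEB walk from $v$ as the first phase of a full sequential CLEB run and then to exploit the validity of that algorithm together with the locality of the uncontraction operation $\Psiuc$. Concretely, I would run the sequential CLEB algorithm on $(G,\partial)$ by first executing the CLEB walk from $v$ (which halts when its exposed backbone edge $\vec f_{\tau_\partial}$ first has head $\partial$) and then completing it in any admissible fashion, for instance by running the CLEB walk algorithm from the remaining vertices. Since the sequential CLEB algorithm outputs the MSA irrespective of the order in which vertices are processed, the spanning arborescence produced after the global uncontraction of all contracted cycles is exactly the MSA of $(G,\partial)$. The task then reduces to showing that this global uncontraction, when restricted to the vertices $B_v$, reproduces the locally defined $\Gamma_v$.

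The crux is a locality statement: no cycle contracted during the completion phase meets the backbone path $P_\upsilon = S_\upsilon \cap \vec E_\upsilon$ of the first walk. Indeed, every (super-)vertex on $P_\upsilon$ already carries an outgoing edge pointing, through the backbone edges $\vec f_1,\dots,\vec f_{\tau_\partial}$, toward $\partial$; since in the sequential CLEB we only ever expose edges out of vertices that currently have no outgoing edge, the future of such a vertex is frozen once the first walk ends and terminates at $\partial$, so it can never lie on a finite directed cycle. Hence all cycles formed after phase $1$ live in the part of the contracted graph disjoint from $P_\upsilon$, and when the global uncontraction is run in reverse order these later cycles are resolved first, leaving the phase-$1$ cycles $\bigcup_j \cL_j$ to be uncontracted entirely within the blobs sitting along the backbone. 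Because $\Psiuc$ resolves a contracted cycle using only the outgoing edge of the contracted vertex in the current arborescence, the uncontraction of the cycles inside the $j$-th blob depends only on its exit edge $\vec f_j$; this is precisely the local recipe defining $E_{N_j}$. Thus the global uncontraction restricted to $B_v$ reproduces $\Gamma_v = \bigcup_{1 \le j \le \tau_\partial}(E_{N_j}\cup\{\vec f_j\})$, giving (c): every edge of $\Gamma_v$ lies in the MSA.

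Statements (a) and (b) then follow quickly. For (b), the backbone meets $\partial$ only at $\vec f_{\tau_\partial}$, because the walk stops at the first exposed edge with head $\partial$, while each $E_{N_j}$ consists of edges of contracted cycles, none of which can be incident to $\partial$ (cycles never pass through the boundary); hence $\Gamma_v$ has a single incoming edge at $\partial$. For (a), note that $v$ lies in the first blob $V(S_{\tau_{N_1}+1})$, so its unique outgoing edge in the arborescence $\Gamma_v$ belongs to $E_{N_1}\subseteq S_{\tau_{N_1}+1}$; by (c) this edge lies in the MSA, and it is necessarily the outgoing edge of $v$ in the MSA since $\Gamma_v$ agrees with the MSA on $B_v$. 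The main obstacle is the locality argument of the second paragraph — carefully verifying that the completion phase neither re-exposes nor cycles through any backbone vertex, and that $\Psiuc$ acts independently on the individual backbone blobs — since everything else is bookkeeping layered on top of the already-established correctness of the sequential CLEB algorithm.
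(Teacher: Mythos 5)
Your proposal is correct and follows essentially the same route as the paper's proof: both embed the CLEB walk from $v$ as the first phase of a valid run of the sequential CLEB algorithm, invoke its correctness, and then use the locality observation that no later-contracted cycle can meet the walk's exposed vertices, so that the global uncontraction restricted to $B_v$ reproduces the local recipe $\Gamma_v$. The only cosmetic difference is the order of deduction --- you establish (c) first and read off (a) from it, while the paper proves (a) directly for the first blob and then iterates the same argument to get (c) --- and your ``frozen future leading to $\partial$'' justification makes explicit what the paper compresses into ``by definition.''
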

\begin{proof}
Observe that after sampling $S_{\tau_{N_1} + 1}$, we can complete the CLEB walk algorithm, and by definition, none of the cycles contracted after step $\tau_{N_1}+1$ can involve any vertex of $V(S_{\tau_{N_1+1}})$.
  Thus when we iteratively uncontract the cycles contracted after $\tau_{N_1 +1}$, no edge is deleted from $\cL_1$. Thus by definition, the loops in $\cL_1$ are uncontracted in the CLEB walk algorithm in the same order. Thus $E_{N_1}$ is a subset of the edges of the MSA. Since $E_{N_1}$ is a spanning arborescence of the graph spanned by $V(S_{\tau_{N_1}+1})$ and it is a subset of $S_{\tau_{N_1}+1}$, it contains an outgoing edge from  $v$, which is therefore in the MSA as well. This completes the proof of part (a).

  Part (b) is immediate since the set of edges revealed in   $S_{N_{\tau_\partial}+1}$ has only one incoming edge into~$\partial$.

  For part (c) one can iterate the same argument as in part (a) More precisely, we can complete the CLEB walk algorithm (in some prescribed order) after $N_{\tau_\partial} +1$ and since none of the cycles formed after $N_{\tau_\partial} +1$ can involve any vertex in $B_v$, we conclude that the uncontraction procedure in the CLEB algorithm uncontracts the cycles in $(\cL_1,\cL_2,\ldots, \cL_{\tau_\partial})$ in reverse order. This last uncontraction produces $\Gamma_v$ by definition and thus $\Gamma_v$ is a subset of the MSA, thereby concluding the proof.
    \end{proof}


We now record an immediate consequence of \Cref{lem:recovery} regarding the connectivity profile of $T$. Given a spanning arborescence $T$, let us introduce the notation $\mathfrak F_T(v)$ to be the \textbf{future} of $v$ in $T$, which we recall is the oriented path in $T$ started from $v$ until it hits $\partial $. Notice that in an arborescence, $\mathfrak F_T(u)$ and $\mathfrak F_T(v)$ merge into a single path leading to $\partial$ and let us consider the first vertex at which they merge.
Let us call this vertex $u \wedge v$, noting that $u \wedge v = u$ or $u \wedge v = v$ or $u \wedge v = \partial$ are all possible. Now introduce the notation
$$
u \stackrel{T}{\longleftrightarrow} v \text{ if }u \wedge v \neq \partial.
$$ 
\begin{lemma}\label{lem:connection}
Using the above notations, for the MSA T, $B_u   \cap B_v \neq \{\partial\}$ if and only if $u  \stackrel{T}{\longleftrightarrow} v$.
\end{lemma}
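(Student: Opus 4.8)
The plan is to leverage \Cref{lem:recovery}, which tells us that $\Gamma_v$ is a spanning arborescence of $B_v$ with boundary $\partial$ contained in the MSA $T$, and crucially (part (b)) that $\Gamma_v$ has a \emph{unique} incoming edge into $\partial$. The whole argument rests on one preliminary observation: for every $w \in B_u$ the future $\mathfrak{F}_T(w)$ is contained in $\Gamma_u$, and symmetrically for $v$. Indeed, since $\Gamma_u \subseteq T$ by \Cref{lem:recovery}(c) and $\Gamma_u$ is a spanning arborescence of $B_u$ with boundary $\partial$, every non-boundary vertex $w \in B_u$ has a unique outgoing edge in $\Gamma_u$; as $T$ is itself a spanning arborescence, this edge must coincide with the unique outgoing edge of $w$ in $T$. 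Following the future of $w$ in $T$ therefore never leaves $\Gamma_u$ until it reaches $\partial$, proving the claim. In particular $\partial \in B_u \cap B_v$ always, so the assertion ``$B_u \cap B_v \neq \{\partial\}$'' simply means these sets share a vertex other than $\partial$.

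For the direction $(u \stackrel{T}{\longleftrightarrow} v) \Rightarrow (B_u \cap B_v \neq \{\partial\})$, set $z = u \wedge v \neq \partial$. By definition $z$ lies on both $\mathfrak{F}_T(u)$ and $\mathfrak{F}_T(v)$; by the preliminary observation these futures sit inside $\Gamma_u$ and $\Gamma_v$ respectively, so $z \in V(\Gamma_u) \cap V(\Gamma_v) = B_u \cap B_v$. Since $z \neq \partial$, this direction is immediate.

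For the converse, suppose $z \in (B_u \cap B_v) \setminus \{\partial\}$. The preliminary observation gives $\mathfrak{F}_T(z) \subseteq \Gamma_u$ and $\mathfrak{F}_T(z) \subseteq \Gamma_v$. Here is where \Cref{lem:recovery}(b) enters: in $\Gamma_u$ there is a unique edge into $\partial$, say with tail $w_u^\star \neq \partial$, so every directed path inside $\Gamma_u$ reaching $\partial$ must pass through $w_u^\star$. Applying this to $\mathfrak{F}_T(u)$ and $\mathfrak{F}_T(z)$, both contain $w_u^\star$, so their merge point satisfies $u \wedge z \neq \partial$; symmetrically $v \wedge z \neq \partial$. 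To finish I would run the standard path-merging argument: writing $\mathfrak{F}_T(z) = (z = c_0, c_1, \ldots, c_p = \partial)$, the points $u \wedge z = c_i$ and $v \wedge z = c_j$ lie on this path with $i, j \le p-1$ (as both differ from $\partial$). Assuming $i \le j$ without loss of generality, the future of $u$ passes through $c_i$ and hence through $c_j$, which also lies on the future of $v$; since $c_j \neq \partial$, the futures of $u$ and $v$ share a non-boundary vertex, giving $u \wedge v \neq \partial$, that is $u \stackrel{T}{\longleftrightarrow} v$.

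The only genuinely delicate point, and the one I would flag as the crux, is the use of \Cref{lem:recovery}(b): without uniqueness of the incoming edge into $\partial$, a vertex $z \in B_u$ could sit in a branch of $\Gamma_u$ meeting $u$'s branch only at $\partial$, and the implication would fail. Everything else is bookkeeping about futures in a directed tree, relying only on the facts that $\Gamma_u$ and $\Gamma_v$ are sub-arborescences of the MSA with the stated boundary behaviour.
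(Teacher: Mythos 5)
Your proof is correct, and it takes a genuinely different route from the paper's. The paper argues via a sequential exploration: after sampling $B_u$, it runs a second CLEB walk from $v$ with boundary $B_u$ (not $\partial$), invokes \Cref{lem:order_invariance} together with \Cref{lem:recovery} to see that the resulting arborescence $\tilde\Gamma_v$ lies in $T$, and identifies the vertex $y$ where this stopped walk first hits $B_u$; connectivity is then read off from the chain $u \stackrel{T}{\longleftrightarrow} v$ iff $u \stackrel{T}{\longleftrightarrow} y$ iff $y \neq \partial$, the last equivalence again resting on \Cref{lem:recovery}(b). You dispense with the second walk entirely: working only with $\Gamma_u$ and $\Gamma_v$ (both with boundary $\partial$, which is the literal reading of the statement), you note that futures in $T$ of vertices of $B_u$ stay inside $\Gamma_u$ --- since $\Gamma_u\subseteq T$ and both are arborescences, the outgoing edges must agree --- and then use the uniqueness of the incoming edge into $\partial$ to force any two such futures to merge strictly before $\partial$. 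Both arguments pivot on \Cref{lem:recovery}(b), as you correctly flag as the crux, but yours is more elementary and self-contained: it avoids \Cref{lem:order_invariance} and also the coupling, left implicit in the paper, between the CLEB walk from $v$ stopped at $B_u$ and the one run to $\partial$, which is what ties the paper's condition $y\neq\partial$ back to the stated condition $B_u\cap B_v\neq\{\partial\}$. What the paper's route buys is the finer output $\mathfrak F_T(v)\cap B_u=\{y\}$, i.e., an identification of where the future of $v$ first enters $B_u$ in terms of the sequential exploration; that formulation is the one that interfaces naturally with the monotonicity of exposed edges exploited later in \Cref{lem:connection_inf,prop:connectivity_convergence}, where the walk from $v$ is indeed run with boundary $B_u$.
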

\begin{proof}
Suppose $B_u$ is sampled first. Now perform the CLEB algorithm started from $v$ with boundary $B_u$ and call the set of exposed edges $\tilde B_v$.  Suppose the head of the last exposed edge in $\tilde B_v$ is $y \in B_u$ and let $\Gamma_u,\tilde \Gamma_v$ be as in \Cref{lem:recovery}, where $\tilde \Gamma_v$ is obtained by uncontracting the loops of $\tilde B_v$ in the graph $G$ with boundary $B_u$. Applying \Cref{lem:recovery} to the graph $(G, B_u)$ and \Cref{lem:order_invariance}, we see that $\tilde \Gamma_v$ is contained in the MSA of $(G, \partial)$. Furthermore by the definition of $y$ and \Cref{lem:recovery} item b., $\mathfrak F_T(v) \cap B_u = y$. Thus $\mathfrak F_T(u) \cap \mathfrak F_T(v) = \mathfrak F_T(u) \cap \mathfrak F_T(y) $ where $T$ is the MSA of $(G, \partial)$. Note that by item b. of \Cref{lem:recovery}, 
 $\mathfrak F_T(y) \cap \mathfrak F_T(u)  = \{\partial\}$ if and only if $y = \partial$. 
\end{proof}

\subsection{CLEB walk and invasion percolation}\label{sec:invasion}
We take a quick detour to prove that in the unoriented setup, the CLEB walk actually reduces to the well-known invasion percolation. This section can be read independently of the rest of the article, except we borrow notations from \Cref{sec:CLEB_walk}.

In this section, we modify our setup of \Cref{sec:CLEB_finite}: we assume that   {\em both orientations of an edge $e$ have the same weight, but otherwise they satisfy \eqref{eq:linear_comb}}. In this setup, one can unambiguously define the minimum spanning tree (MST) on $G$ where the weight of every (unoriented) edge is the common weight of both its orientations. It is easy to see that if we fix any vertex in the graph to be the boundary and orient all the edges of the MST towards this vertex, we obtain the MSA. It is well-known that invasion percolation produces the MST (this is also known as Prim's algorithm \cite{prim1957shortest}). It is natural to thus compare the CLEB walk algorithm and invasion percolation in this setup. We prove below that indeed one can recover the invasion percolation process from the CLEB walk.

First of all, a moment's thought reveals that there is no issue in running the CLEB walk, namely there are no ties to deal with in any step: indeed the weight of the outgoing edges from any vertex in any step, are all distinct, and hence subtracting linear combinations of other weights from them cannot produce any ties by \eqref{eq:linear_comb}. However, the validity of the CLEB walk algorithm is not immediate.

Now let us recall the invasion percolation process.
Take a finite, connected, weighted graph $G$ possibly with multiple edges (no orientation yet) and assume that the weights satisfy \eqref{eq:linear_comb}. Invasion percolation started at $x$ defines a growing sequence of trees $(T_k)_{k \ge 1}$ as follows: $T_1$ is the edge with the smallest weight incident to $x$. Once $T_k$ is defined, consider all the edges with one endpoint in $T_k$ and the other outside $T_k$, and pick the one with the smallest weight among them, and add it to $T_k$ to define $T_{k+1}$. We stop if there is no further edge to add.


We shall use the notations from \Cref{sec:CLEB_finite} introduced to define the CLEB walk. We recall for convenience that for a CLEB walk started at $x$, $S_i \cap \vec E_i$ is an oriented path $P_i$ started at $x$ (before $\partial $ is hit). Recall the notation $\cO(v)$ which denotes the set of outgoing edges from $v$.

\begin{proposition}
In the above setup, perform the CLEB walk from a vertex $x$, let $\tau_1 = 1$, and inductively for $k \ge 2$ define $\tau_k$ to be the smallest $t > \tau_{k-1}$ such that the CLEB walk reveals an oriented edge whose reversal has not been exposed before time $t$. Let $\bar S_{\tau_k}$ denote the set of unoriented edges one of whose orientations is in $S_{\tau_k}$. Then $\bar S_{\tau_k} = T_k$.
\end{proposition}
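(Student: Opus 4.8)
The plan is to prove $\bar S_{\tau_k}=T_k$ by induction on $k$, establishing simultaneously the stronger statement that at time $\tau_k$ the set of vertices already visited by the CLEB walk equals $V(T_{k-1})$ and that the unoriented edge exposed at step $\tau_k$ is the minimum-weight edge joining $V(T_{k-1})$ to its complement --- that is, exactly the $k$-th edge chosen by invasion percolation. The base case is immediate: $\tau_1=1$ and $S_1$ consists of the single lightest outgoing edge of $x$, whose unoriented version is the lightest edge incident to $x$, so $\bar S_{\tau_1}=T_1$. Since each $\tau_k$ exposes a new unoriented edge (its reversal being unexposed) while the intermediate steps expose only reversals of already-exposed edges, $|\bar S_{\tau_k}|=k$ automatically, so it suffices to match the edges one at a time.

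The engine of the induction is a step-by-step dichotomy. At any moment the non-contracted exposed edges $S_t\cap \vec E_t$ form an oriented path $v_0\to v_1\to\cdots\to v_\ell$ of (possibly contracted) super-vertices whose underlying vertex sets $B_0,\dots,B_\ell$ partition the visited set $A=V(S_t)$; the walk sits at the frontier $v_\ell$ and reveals the $\pi$-minimal outgoing edge there. If that edge goes to an unvisited vertex it is a \emph{forward} move, its reversal is unexposed, and this is a $\tau$-time. If it is the reversal of the edge just traversed it is a \emph{backtrack}: it closes a $2$-cycle with $v_{\ell-1}$, which is then contracted, and this is not a $\tau$-time. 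The first task is to rule out any other possibility, namely that the frontier's minimal edge is a ``chord'' to an earlier super-vertex $v_j$ with $j\le \ell-2$. I would do this by contradiction: such a chord $b_\ell u$ with $u\in B_j$ would have modified weight at most that of the back edge, forcing $w(b_\ell u)$ below the forward-edge weight used when $v_j$ last advanced; but $b_\ell$ was unvisited at that earlier moment, so $b_\ell u$ was then an eligible outgoing edge of $v_j$ of smaller weight than the one actually chosen, contradicting minimality.

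To convert the forward/backtrack dichotomy into the invasion identity I would carry the following invariant along the path: (i) the original weights $f_0>f_1>\cdots>f_{\ell-1}$ of the successive forward edges strictly decrease; (ii) the boundary edges of the frontier $B_\ell$ are all reduced by one common amount $R(B_\ell)$, equal to the maximum weight of an edge internal to $B_\ell$; and (iii) at the moment each interior super-vertex $v_i$ advanced it enjoyed this same uniform-reduction property, so that every boundary edge of $v_i$ other than its forward edge has original weight at least $f_i$. Preservation of (ii) under a backtrack is a short telescoping computation: the frontier subtracts $\pi_{v_\ell}=f_{\ell-1}-R(B_\ell)$ to zero out its back edge, after which both $B_{\ell-1}$ and $B_\ell$ have had exactly $f_{\ell-1}$ removed from their boundary edges, so the merged super-vertex is again uniformly reduced, now by $f_{\ell-1}$, which is the maximum of its internal edge weights. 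Granting the invariant, a forward edge of original weight $f^{\mathrm{new}}$ at the frontier is strictly lighter than every boundary edge of $A$: the other boundary edges at $v_\ell$ are $\ge f^{\mathrm{new}}$ by (ii) and minimality, while a boundary edge at an interior $v_i$ is $\ge f_i\ge f_{\ell-1}>f^{\mathrm{new}}$ by (i) and (iii). Hence this forward edge is the global minimum-weight boundary edge of $A=V(T_{k-1})$, i.e. the $k$-th invasion edge, closing the induction; a backtrack merely merges $v_{\ell-1}$ and $v_\ell$, leaves $A$ unchanged, and preserves (i)--(iii) until the next $\tau$-time.

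The main obstacle is establishing and maintaining invariant (ii) through the interplay of forward growth and contraction --- in particular the uniformity of the subtractions on a contracted super-vertex together with the no-chord claim, which are precisely what make the purely local minimization performed by the CLEB walk at a single frontier vertex coincide with the global minimization over all boundary edges that defines invasion percolation. Once these structural facts are in place, the identity $\bar S_{\tau_k}=T_k$ follows by reading off the successive forward edges, valid for every $k$ up to the step at which the CLEB walk reaches $\partial$.
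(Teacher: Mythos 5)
Your proposal is correct and is essentially the paper's own proof: the paper's central claim is exactly your invariants (uniform reduction of all outgoing weights at every super-vertex on the path, with the interior reduction amounts pinned to the strictly decreasing forward-edge weights), proved by the same telescoping induction over forward/backtrack steps, and chords are excluded by the same minimality contradiction comparing original weights via the uniform reductions. The only differences are cosmetic: the paper concludes by showing the walk must retrace back and then expose the globally minimal boundary edge, whereas you compare weights directly at the moment of each forward move, and both arguments state the no-chord case only for $j \le \ell-2$ (resp.\ $i \le m_k-2$ in the paper) even though the identical argument is also needed, and works, to exclude non-reversal edges from the frontier into $v_{\ell-1}$.
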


\begin{proof}
The proof proceeds through induction. Start a CLEB walk from $x$. Write $P_{k} = (\vec e_1,\ldots, \vec e_{m_k}) $ and let $x_i = \vec e_i^+$ for $1 \le i \le k$ and $x_0 = x$. We claim that for all $k \ge 1$:
\begin{itemize}
\item For every $ 0 \le i \le m_k$, $U_{0, \vec e}- U_{k, \vec e}$ is the same for all $\vec e \in \cO(x_i)$.
\item $U_{k, \vec e} = U_{0,\vec e} - U_{0,\vec e_{i}}$ for all $\vec e \in \cO(x_{i-1})$, $1 \le i \le m_{k}$. 
\end{itemize}
Note the first item above is a consequence of the second except for the elements in $\cO(x_{m_k})$. 
We now prove the claim by induction.  Clearly, the claim is true for $k=1$ simply by the definition of the first step of the CLEB walk. Now assume this is true for some $k$. Let $\overleftarrow e$ denote the reversal of the orientation of the oriented edge $\vec e$. Let $\alpha_k \ge 0$ be the common value of $U_{0, \vec e}- U_{k, \vec e}$ for all $\vec e \in \cO(x_{m_k})$. In step $k+1$, the CLEB walk either exposes the reversal of $\vec e_{m_k}$ or it doesn't. In the former case, the weight subtracted in step $k+1$ from all edges in $\cO(x_{m_k})$ is $U_{k, \overleftarrow{e}_{m_k}}= U_{0, \overleftarrow{e}_{m_k}} - \alpha_k  = U_{0, \vec{e}_{m_k}} - \alpha_k$ by induction hypothesis and the assumption that the initial weight of $\vec e$ and its reversal is the same. By this observation, and the induction hypothesis, $$U_{0, \vec e} - U_{k+1, \vec e} = U_{0, \vec e} - (U_{k, \vec e}  - U_{0, \vec{e}_{m_k}} + \alpha_k) = U_{0, \vec{e}_{m_k}}  $$ of every $\vec e$ in $\cO(x_{m_k})$. Combining this with the second item of the induction hypothesis, the initial weight minus the current weight of every oriented edge coming out of the new vertex formed by contracting $(x_{m_{k-1}}, x_{m_k}, x_{m_{k-1}})$ is the same, and is $U_{0, \vec e_{m_k}}$. Since all other weights are undisturbed, induction is complete in this case.

Now suppose, the next edge taken by the CLEB walk is not the reversal of $\vec e_{m_k}$. We make a couple of observations.  Firstly,
\begin{equation}
U_{0,\vec e} - U_{0,\vec e_{i}} \ge 0 \text{ for all } \vec e \in \cO(x_{i-1}), \quad  1 \le i \le m_{k}. \label{eq:monotone_P1}
\end{equation}
Secondly, since the reversal of $\vec e_{i}$ has the same $U_0$-weight as $\vec e_i$, the claim also implies that 
\begin{equation}
U_{0, \vec e_i } \ge U_{0, \vec e_{i+1}} \text{ for all } 1 \le i \le m_k-1 \label{eq:monotone_P}.
\end{equation}

Observe that this new edge (call it $\vec e$) cannot have an endpoint which is one of $x_i$, $0 \le i \le m_{k}-2$. Indeed, if this is the case, $U_{k, \vec e} < U_{k, {\overleftarrow e}_{m_k}}$ and by the first item of the induction hypothesis, $U_{0, \vec e} < U_{0, {\overleftarrow e}_{m_k}}  = U_{0, {\vec e}_{m_k}}$. However, this means that $U_{0, \overleftarrow e}$ violates \eqref{eq:monotone_P1} in light of \eqref{eq:monotone_P}. Thus this new edge has a completely new vertex as an endpoint. The first item is true for the endpoint, the second item is not applicable. Furthermore, the second item for $x_{m_k}$ is true by a similar argument as in the first case. By induction, the claim is proved.

Now we come back to the proof of the proposition. Let $\cO'(x_i)$ denote the outgoing oriented edges from $x_i$ except those which are either $\vec e_i$ or their reversals.  Thus $\tau_{k+1}$ is the smallest time after $\tau_k$ at which one of the elements of $\cup_{1 \le i \le m_k} \cO'(x_i)$ is exposed. We claim that at time $\tau_{k+1}$ the oriented edge $\vec e$ exposed has $U_{0,\vec e}$ the smallest among all $U_{0,x}$ for $x \in \cup_{1 \le i \le m_k} \cO'(x_i)$, which is indeed what is required to prove the proposition. Suppose the edge $\vec f$ has the smallest $U_0$-weight in $\cup_{1 \le i \le m_k} \cO'(x_i)$  and $\vec f $ is in $\cO'(x_i)$ for some $i \le m_{\tau_k}$. By \eqref{eq:monotone_P} and \eqref{eq:monotone_P1},
$$
\min\{U_{0,\vec e}:\vec e \in \cO'(x_t), i+1 \le t \le m_{\tau_k}\} > U_{0, \vec f} > U_{0, \vec e_{i+1}} > \ldots>U_{0, \vec e_{m_{\tau_k}}}
$$
By the first item of the claim and the above inequalities, the CLEB walk must retrace back along the reversals of $\vec e_{m_{\tau_k}}, \ldots, \vec e_{i+1}$. After it retraces back with contraction in every step, the new vertex $v$ created has  $U_{0, \vec e} -  U_{\tau_k+m_{\tau_k}  - i, \vec e} $ equal for all $\vec e \in \cO(v)$, by first item of the claim. Now note that the reversal of $\vec e_i$ is in $\cO(v)$ but its $U_0$-weight is the same as $U_{0, \vec e_i } > U_{0, \vec e_{i+1}}>U_{0, \vec  f}$. By definition, $\vec f$ has the smallest $U_0$-weight in every other element in $\cO(v)$. Thus the edge exposed in the next step of the CLEB walk must be $\vec f$. The proof is thus complete.
\end{proof}

\subsection{CLEB algorithm in the infinite graph}\label{sec:ECL_infinite}

The notion of the CLEB algorithm extends easily to the infinite setting, the only difference from the finite setting is that the process never stops (analogous to the celebrated Wilson's algorithm `rooted at $\infty$' in transient graphs). Take $G$ to be an infinite, locally finite, connected graph possibly with multiple edges but no self-loops. Suppose that we have a collection of random weights $(U_{\vec e})_{\vec e}$ which is a.s.\ generic. As in the finite case, we can define the CLEB walk at any vertex $x$, except the process runs forever. Thus, using the notations from \Cref{sec:CLEB_finite}  we get an infinite collection of exposed edges $\cup_{i \ge 0} S_i $. Observe that since $S_i$ is monotone increasing: for every oriented edge $\vec e$, 
\begin{equation}
 1_{\vec e \in S_i} \text{ is non-decreasing in $i$ almost surely}. \label{eq:monotone}
\end{equation}
Similar to the finite setup, that $S_i \cap \vec E_i$ is in general a path in $G_i$ which we call $P_i$. We point out here that $P_i$ is empty if all the exposed edges up to step $i$ are contracted into a single vertex in $G_i$. 

We say the CLEB walk started at $x$ is \textbf{transient} almost surely if $S_i \cap \vec E_i $ converges to an infinite path started at $x$ as $i \to \infty$ almost surely, otherwise we call it \textbf{recurrent} at $x$. Note that unlike Markov chains it is unclear whether there is any 0-1 law for transience (see \Cref{qn:0-1}). Nevertheless the following is true.
\begin{lemma}\label{lem:transience_equiv}
For every vertex $x$, the CLEB walk started at $x$, $S_j \cap \vec E_j = \emptyset$ finitely often a.s.\  if and only if the CLEB walk is transient starting from every vertex $x$ a.s..
\end{lemma}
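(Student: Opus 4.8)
The plan is to read off both conditions from the evolution of the \emph{level} of the active path $P_j = S_j \cap \vec E_j$, meaning its number of edges, and to build the limiting infinite path out of $x$ one edge at a time (as in the construction preceding \Cref{lem:recovery}). The first observation is that $\{P_j=\emptyset \text{ finitely often}\}$ is exactly the event that the $x$-walk has a last time $\tau_{N_1}$ at which $P_j$ is empty, and that on this event the first edge $\vec f_1 := S_{\tau_{N_1}+1}\setminus S_{\tau_{N_1}}$ is frozen forever: the only way the first edge can leave $P_j$ is through the contraction of a cycle based at the (contracted) source vertex, and such a contraction makes $P_j$ empty again. The easy direction of the lemma is then immediate and pointwise: if the $x$-walk is transient then $S_j \cap \vec E_j$ converges to an infinite path, so $P_j$ is eventually nonempty and hence empty only finitely often.

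For the converse, assume that a.s.\ the walk from every vertex has $P_j=\emptyset$ finitely often; as there are countably many vertices, this holds simultaneously for all starting vertices on a single almost sure event. Fix $x$ and argue by induction on $k\ge 0$ that a.s.\ the first $k$ edges $\vec f_1,\dots,\vec f_k$ of the path out of $x$ stabilize. Let $y_k$ be the head of $\vec f_k$ (with $y_0:=x$), and let $V(s_k)$ be the set of original vertices absorbed into the first $k$ levels, i.e.\ the source blob together with $y_1,\dots,y_{k-1}$ (so $V(s_0)=\emptyset$). On the event $E_k$ that the first $k$ edges are frozen, the path stays at level $\ge k$ forever, which is precisely the statement that the continuation of the $x$-walk beyond $y_k$ never reveals an edge back into $V(s_k)$.

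The crux is the claim that, on $E_k$, this continuation coincides with the \emph{free} CLEB walk started from $y_k$ run with the same weights $(U_{\vec e})$. When first reached, $y_k$ is an unvisited single vertex, so both walks expose the same minimal outgoing edge of $y_k$; thereafter both make identical contractions as long as neither enters $V(s_k)$, and a first entry into $V(s_k)$ would be the same move for both, so the two walks share the same first-entry time (possibly $\infty$) into $V(s_k)$. On $E_k$ the continuation never enters $V(s_k)$, hence neither does the free $y_k$-walk, and the two processes are identical forever. Applying the hypothesis at the random vertex $y_k$ --- legitimate because it holds at all vertices on one almost sure event --- the free $y_k$-walk has empty active path only finitely often; transporting this through the identification, the $x$-walk returns to level $k$ only finitely often, so the $(k+1)$-st edge $\vec f_{k+1}$ freezes. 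This advances the induction, and $\bigcap_k E_k$ has full probability; on it the frozen edges $\vec f_1,\vec f_2,\dots$ form an infinite path, which is transience of the $x$-walk. The base case $k=0$ is the instance $V(s_0)=\emptyset$, where the continuation is literally the $x$-walk and the claim reduces to the hypothesis at $x$.

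The step I expect to require the most care is the coupling in the inductive step: one must check that before the walk reaches $y_k$ the weight modifications performed by the $x$-walk only touch vertices inside $V(s_k)$, using that a freshly visited vertex has pristine outgoing weights and that subtracting a common constant from the outgoing edges of a vertex never changes which edge is minimal. This is what guarantees that beyond $y_k$ the reduced walk and the free $y_k$-walk genuinely see the same weights and therefore perform the same sequence of contractions up to the (possibly infinite) first-entry time into $V(s_k)$.
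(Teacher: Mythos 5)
Your proof is correct, and at its core it runs on the same engine as the paper's: the observation that, on the event that the walk never re-enters the previously explored vertex set, its continuation from a freshly exposed vertex coincides step by step with a free CLEB walk started there with the same weights, combined with countability of the vertex set so that the hypothesis can be invoked at a random starting vertex. The difference is in the architecture. The paper argues by contradiction: writing $\ell_j$ for the length of $P_j$, non-transience from some $x$ means $\ell:=\liminf_j \ell_j<\infty$ with positive probability; taking $k_0$ minimal such that $\P\bigl(\inf_{j\ge k_0}\ell_j=\ell\bigr)>0$, the edge exposed at step $k_0$ has a fresh head $v$, the continuation never meets $V(S_{k_0-1})$, and it returns to level $\ell$ --- that is, the coupled $v$-walk empties its path --- infinitely often, contradicting the hypothesis at $v$. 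Your argument is instead a direct induction on the level, freezing the limiting path one edge at a time. It is longer, but it is constructive: besides transience it exhibits the limit path and establishes en route that the regeneration times $\tau_{N_j}$ are a.s.\ finite, facts which the paper derives only afterwards from transience when setting up the CLEB walk rooted at infinity in \Cref{sec:ECL_infinite}.

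One imprecision you should fix: the gloss that $V(s_k)$ is ``the source blob together with $y_1,\dots,y_{k-1}$'' is not right. Before $\vec f_{j+1}$ freezes, the walk may return to level $j$ several times, and each such return contracts a cycle \emph{into} position $j$; so the intermediate positions of the frozen path are in general contracted blobs, not the single vertices $y_j$. Your operative definition --- all original vertices absorbed into the first $k$ levels --- is the correct one, and it is the one your equivalence ``level $\ge k$ forever iff no exposed edge into $V(s_k)$'' and your coupling step actually require; read with that definition, the argument is complete.
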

\begin{proof}
The proof of the if part is trivial. So, let us argue the only if the direction and note that the situation we need to rule out is that the length of $P_j$ is bounded by some number infinitely often with positive probability. 
Let $\ell_j$ denote the length of $S_j \cap \vec E_j$. Assume that there exists a vertex $x$ such that the CLEB walk started at $x$ is not transient, i.e., $\ell:=\liminf \ell_j< \infty$ with positive probability. Let $\cR_k = \{  \inf\{\ell_j: j \ge k\}  = \ell \}$ and let $k_0$ be smallest integer such that $\P(\cR_{k_0})>0$. 
On the event $\cR_{k_0}$,  the head of $S_{k_0} \setminus S_{k_0-1}$ is a new unexplored vertex, say  $v $,  in $G$. Since $\ell_j$ never goes below $\ell$ after time $k_0$ on the event $\cR_{k_0}$, any exposed edge from the time $k_0$ onward does not meet  any vertex from $V(S_{k_0-1})$.  Also, 
since $\ell_j$ is $ \ell$ infinitely often after time $k_0$ on the event $\cR_{k_0}$, the CLEB walk started at $v$ satisfies $S_j \cap \vec E_j = \emptyset$ infinitely often on this event, which contradicts the hypothesis. 
\end{proof}

\paragraph{CLEB algorithm rooted at infinity}
If in a graph $G$, the CLEB walk is transient a.s.\ for all vertices, we can define an infinite version of the CLEB walk, which we call \textbf{CLEB walk rooted at infinity.} To describe this algorithm, we assume that we have a deterministic collection of generic weights $(w_{\vec e})_{\vec e \in \vec E}$ such that the CLEB walk started at any vertex is transient. Let $B_G(x,r)$ denote the graph distance ball of radius $r$ in $G$.
An ordering $(x_1, x_2, \ldots)$ of the vertex set $V$ is \textbf{good} if for all $r \ge 1$, there exists an $n_r$ such that the vertex set in $B_G(x_1,r)$ is contained in $(x_1,\ldots, x_{n_r})$.

\begin{itemize}
\item Order the vertices $V$ in some good order $(x_1, x_2, \ldots, )$, sample a CLEB walk from $x_1$ and let $B_{x_1} = V(\Gamma_{x_1})$ where $\Gamma_{x_1}$ is the set of exposed edges of the CLEB walk started from $x$. 

\item Having defined $B_{x_1}=:B_1,B_{2}, \ldots, B_{j}$,  if $\cup_{1 \le m \le j} B_{m} = V$, stop the process. Otherwise, choose the vertex $x_{(j)}$ with the smallest index which is not in $\cup_{1 \le m \le j} B_{m}$. Perform the CLEB walk with boundary $\cup_{ 1 \le m \le j }B_{m} $ started at $x_{(j)}$ with $x_{(1)} = x_1 $.  Set $B_{j+1}  = V(\Gamma_{x_{i}})$ for this CLEB walk.
\end{itemize}
We emphasize that at this point, it is not clear whether $\cup_{1 \le m \le j}B_i \neq V$ for some finite $j$. Also, observe that the reverse ordering of loops in order to obtain an infinite analogue of the uncontraction process of loops is ambiguous unless the CLEB walk is a.s.\ transient from every vertex. 


Using the assumption that CLEB walk is transient from every vertex, we now define certain stopping times analogous to \Cref{sec:CLEB_walk}.
Let $\tau_0 =0$ and let $\tau_i$ be the $i$th time such that $S_{\tau_i} \cap E_{\tau_i} = \emptyset$. Let $\tau_{N_1}$ be the largest such time, which is a.s.\ finite by the a.s.\ transience assumption. Let $\cL_1$ be the ordered sequence of loops contracted up to time $\tau_{N_1}$. We now uncontract the loops of $\cL_1 $ in reverse order exactly as in \Cref{sec:CLEB_walk}. Let $E_{N_1}$ be the spanning arborescence of the graph spanned by $V(S_{\tau_{N_1}+1})$ with boundary $(\vec f_1)_+$ where $\vec f_1:=S_{\tau_{N_1}+1} \setminus S_{\tau_{N_1}} $. 

Since $S_{\tau_{N_1}}$ is never hit in after time $\tau_{N_1}$ in the CLEB walk started from $x_1$, we can iterate the same procedure for the CLEB walk after time $\tau_{N_1}+1$, again exactly as in \Cref{sec:CLEB_walk}. Inductively, if $N_{j-1} <\infty$, let $\tau_{N_{j-1}+k}$ be the $k$th time after $N_{j-1}$ such that $$S_{\tau_{N_{j-1} + k}} \cap \vec E_{\tau_{N_{j-1}+k}} = S_{\tau_{N_{j-1}+1}} \cap \vec E_{\tau_{N_{j-1}+1}}.$$ Let $N_j$ be such that  $N_{j} - N_{j-1}$ be the largest such $k $. It is not too hard to see that $\tau_{N_j} <\infty$ a.s.\ if the CLEB walk is transient from the head of $f_{j-1}$ (since the exposed edges are a.s.\ never hit, transience in the original graph implies this statement), and thus the inductive definition is valid for all $j\ge 1$. Uncontracting the loops for each $j$ separately as described in \Cref{sec:CLEB_walk}, we obtain  a sequence $(E_{N_i})_{i \ge 1}$ of spanning arborescences of the graph spanned by $V(S_{\tau_{N_{i+1}}})$  with boundary being the tail of $\vec f_i:=S_{\tau_{N_i} +1} \setminus S_{\tau_{N_i}}$. Thus we see that $\cup_{i \ge 1} E_{N_i}$ forms an infinite spanning arborescence of the graph spanned by $B_{1}$ `rooted at infinity', in the sense that there is an infinite oriented path started from $x_1 $ with the branch from all other vertices merging with this path.



 We can similarly perform the uncontraction procedure to inductively obtain spanning arborescence of the graph spanned by $B_{j+1}$ with boundary $\cup_{1 \le m \le j}B_m$ (each such process might produce an infinite arborescence, or might form a finite arborescence which stops when one of the previously exposed vertices are hit). Since the ordering of $V$ is chosen to be good, this gives us a spanning arborescence of $G$. This completes the description of the CLEB algorithm rooted at infinity.

Now we consider a collection of nonnegative random weights $(U_{\vec e})_{\vec e \in \vec E}$ which is a.s.\ generic.
At the moment, it is not clear whether this is the limit of the MSA even if the CLEB walk is a.s.\ transient from every vertex, nor is it clear that the limit is independent of the order of the vertices chosen. The next proposition tells us that indeed both are true.

\begin{proposition}\label{prop:necessary_cond}
Suppose $G$ is an infinite, locally finite, connected graph and the CLEB walk is transient a.s.\ for all vertices $x$ in $G$. Then the wired MSA limit exists almost surely in $G$ and the limit does not depend on the choice of the exhaustion chosen.

Furthermore, the wired MSA can be a.s.\ sampled using the CLEB algorithm rooted at infinity.
\end{proposition}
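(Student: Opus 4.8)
The plan is to read off the outgoing edge of each fixed vertex from a finite, local piece of the CLEB walk, using \Cref{lem:recovery}, and then to argue that this piece is computed identically in the infinite graph $G$ and in the wired graphs $G_i^{\w}$ for all large $i$. Throughout I would work on the almost sure event that the weights are generic and the CLEB walk is transient from every vertex. By \Cref{lem:transience_equiv} this means that for the CLEB walk started at any vertex $v$ the steps at which $S_j\cap\vec E_j=\emptyset$ are finite in number, so the last such time $\tau_{N_1}$ is a.s.\ finite and the exposed set $S_{\tau_{N_1}+1}$ is a finite collection of edges. \Cref{lem:recovery}(a) tells us that in a finite graph the outgoing edge of $v$ in the MSA is the outgoing edge of $v$ in the arborescence $E_{N_1}$ obtained by uncontracting the loops of $\cL_1$, and this depends on the walk only through time $\tau_{N_1}+1$.

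First I would prove convergence at a fixed $v$ for an arbitrary exhaustion $(G_i)$. Since $S_{\tau_{N_1}+1}$ for the infinite CLEB walk from $v$ is finite, it lies in $G_{i_0}$ for some $i_0$. Fix $i\ge i_0$ and run the CLEB walk from $v$ in $G_i^{\w}$. Because the wiring alters only vertices and edges outside $G_i$ while leaving the weights inside $G_i$ untouched, the minimum-weight outgoing edge at every vertex or contracted vertex lying inside $G_i$ is the same as in $G$; hence the walk in $G_i^{\w}$ follows exactly the same steps as the infinite walk until the first time the infinite walk would traverse an edge leaving $G_i$, at which moment the finite walk instead arrives at $\partial_i$ and halts. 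As the infinite walk stays inside $G_i$ through time $\tau_{N_1}+1$, this first-exit time exceeds $\tau_{N_1}+1$, so the two walks agree up to and including time $\tau_{N_1}+1$. Moreover the last empty time of the walk in $G_i^{\w}$ is again $\tau_{N_1}$: after time $\tau_{N_1}+1$ the infinite walk never again has an empty contracted path (the edge $\vec f_1$ lies on the final infinite path and is never recontracted), and the finite walk inherits this until it exits at $\partial_i$. Applying \Cref{lem:recovery}(a) in $G_i^{\w}$ then identifies the outgoing edge of $v$ in $T_i$ with the outgoing edge of $v$ in $E_{N_1}$, an object depending only on the infinite CLEB walk from $v$. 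This value is the same for every $i\ge i_0$ and for every exhaustion, which proves both that $T_i$ stabilizes at $v$ and that the limiting edge is exhaustion-independent. As $v$ is arbitrary, the wired MSA limit exists a.s.\ and does not depend on the exhaustion.

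For the final assertion, let $T$ denote the wired MSA limit; the second paragraph shows $T$ is a spanning arborescence whose edge out of each vertex $u$ equals the edge out of $u$ in $E_{N_1}$ read from the CLEB walk started at $u$. Let $A$ be the spanning arborescence produced by the CLEB algorithm rooted at infinity. Since both $A$ and $T$ give every vertex exactly one outgoing edge, it suffices to prove $A\subseteq T$, for then equality is forced. I would do this by showing every edge of $A$ is the MSA-limit edge of its tail: each edge of $A$ is recovered from some CLEB walk started at a vertex $x_{(j)}$ with boundary $\bigcup_{m\le j}B_m$, and by the order invariance of the exposed edges (\Cref{lem:order_invariance}) together with \Cref{lem:recovery}, the edge that such a walk assigns to any covered vertex $u$ agrees with the edge the CLEB walk started at $u$ assigns, hence with $E_{N_1}(u)\in T$; the good ordering and transience are what guarantee that the segment of the walk from $u$ up to its last empty time avoids the boundary $\bigcup_{m\le j}B_m$, so the recovered edge is the boundary-free one. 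Granting this, $A\subseteq T$ and therefore $A=T$, i.e.\ the CLEB algorithm rooted at infinity a.s.\ samples the wired MSA limit.

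The main obstacle is the bookkeeping underlying the two matching claims: that the last empty time $\tau_{N_1}$ is preserved under truncation (so that it is the same when computed in $G$ and in $G_i^{\w}$), and that the recovered outgoing edge is insensitive to the moving boundary $\bigcup_{m\le j}B_m$ in the rooted-at-infinity algorithm. Both hinge on the structural fact that, once the walk passes its final empty time, its contracted path grows monotonically toward infinity and never revisits the base vertex $V(S_{\tau_{N_1}})$. Isolating this fact cleanly and transferring it first to the truncated walks in $G_i^{\w}$ and then to the boundaryed walks in the rooted-at-infinity construction is the technical heart of the proof; everything else is the step-by-step comparison of two deterministic (in the weights) processes that agree as long as neither has reached the modified region.
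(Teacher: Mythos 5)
Your first half — existence of the limit and exhaustion-independence — is correct, and it runs on the same engine as the paper's proof: a.s.\ transience makes the data $(S_{\tau_{N_1}+1},\cL_1,\vec f_1)$ of the walk from $v$ finite; the walk in $G_i^{\w}$ agrees step-by-step with the walk in $G$ as long as nothing exposed touches $\partial_i$, hence through time $\tau_{N_1}+1$ once $S_{\tau_{N_1}+1}\subset G_i$; and \Cref{lem:recovery}(a) (more precisely the fact, from its proof, that $E_{N_1}$ is contained in the MSA) pins down the outgoing edge of $v$ in $T_i$ for all large $i$. The paper does this for all vertices at once through the rooted-at-infinity algorithm; you do it one vertex at a time, which is a harmless reorganization.

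The gap is in the second half. To get $A\subseteq T$ you assert that the edge assigned to a covered vertex $u$ by the boundaried walk (started at $x_{(j)}$ with boundary $\bigcup_{m\le j}B_m$) coincides with the edge assigned by the boundary-free walk started at $u$, and your justification is that ``the segment of the walk from $u$ up to its last empty time avoids the boundary $\bigcup_{m\le j}B_m$.'' That claim is false: nothing prevents the boundary-free walk from $u$ from entering previously explored territory — for instance, if the minimum-weight outgoing edge of $u$ points into $B_1$, it enters at its very first step, which may well be before (or at) its last empty time. Conversely, the boundaried walk from $x_{(j)}$ is killed upon hitting $\bigcup_{m\le j}B_m$, so its block covering $u$ is not a run of any boundary-free walk either; the two processes genuinely see different rules. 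Note that even in finite volume, consistency of the assignments made by walks with different starting points and different boundaries is never proved by matching trajectories: it is proved by showing that each such walk's uncontracted output is a subset of the \emph{one} MSA (correctness of the CLEB walk algorithm, \Cref{lem:recovery}(c), \Cref{lem:order_invariance}), and all of those are finite-graph statements. So the consistency you need is exactly the thing that must be transported to infinite volume, and the available mechanism is another finite-volume approximation, which is what the paper does: by transience, the blocks of the walks from $x_{(1)},\ldots,x_{(j)}$ that determine the assignment at $u$ are finitely many finite sets, so one can pick $n$ large enough that they lie in $G_n$ and that the finitely many boundary vertices they meet have already been exposed by the truncated earlier walks; then the CLEB walk algorithm on $G_n^{\w}$ with ordering $x_1,\ldots,x_R,\ldots$ reproduces those blocks, its output is $T_n$, and $T_n\to T$ by your first part. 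Your first half already contains all the tools for this, so the repair is a rearrangement rather than a new idea — but as written, the argument for the ``rooted at infinity'' statement does not stand.
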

\begin{proof}
Let $(G_n)_{n \ge 1}$ be an exhaustion of $G$. Pick an edge $\vec e$. Pick a choice of good ordering of the vertices $(x_1, x_2, \ldots)$ and pick a (random) $R$ so large that $\vec e_-$ is an endpoint of an edge in some  set  $A(\vec e) $ of the form $S_{\tau_{N_j} }$ when $(\Gamma_{x_{(1)}}, \cdots, \Gamma_{x_{(r)}})$   has been sampled where $r$ is the maximum number such that $\{ x_{(1)}, \ldots, x_{(r)}\} \subset \{x_1, x_2, \ldots, x_R\}$. By the a.s.\ transience of the CLEB walk starting from any $x$, such a set $A(\vec e)$ must be finite almost surely. 
Since the ordering of vertices chosen is good, $R <\infty$ almost surely. Now pick a  random but finite $n$ large enough so that $(x_1,\ldots, x_R) $ is inside $G_n$ as well as $A(\vec e) \subset G_n$.   We can use the CLEB walk algorithm to sample the MSA in $G_n$ where the first $R$ vertices are $(x_1,\ldots, x_R)$. 
Therefore, by Lemma~\ref{lem:recovery}, $\vec e$ is in the MSA of $G_n$ if and only if it is in the arborescence obtained by the CLEB algorithm rooted at infinity. 
\end{proof}

For general graphs where a.s.\ transience is perhaps not guaranteed, we now prove that the connectivity profile of the vertices in the graph converges almost surely.  Recall the notation $\mathfrak F_T(v)$ which denotes the \textbf{future} of $v$ in $T$. Notice that if $u$ and $v$ belong to the same component of $T$ then $\mathfrak F_T(u)$ and $\mathfrak F_T(v)$ meet at a vertex after which they merge. Let us call this vertex $u \wedge v$, noting that $u \wedge v = u$ or $u \wedge v = v$ is possible and $u \wedge v =  \emptyset $ by convention if $\mathfrak F_T(u) \cap \mathfrak F_T(v) = \emptyset$. Introduce the notation
$$
u \stackrel{T}{\longleftrightarrow} v
$$
if $\mathfrak F_T(u) \cap \mathfrak F_T(v) \neq \emptyset$.

We now state a lemma analogous to \Cref{lem:connection}.
\begin{lemma}\label{lem:connection_inf}
Suppose the wired MSA limit exists in G almost surely, call it $T$. Then for every $u ,v$, $B_u \cap B_v \neq \emptyset$ if and only if $u \stackrel{T}{\longleftrightarrow} v$.
\end{lemma}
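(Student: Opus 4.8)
The plan is to deduce the infinite statement from its finite counterpart \Cref{lem:connection} by exhausting $G$ and passing to the limit, using the assumed existence of the wired MSA limit $T$ to control the exploration near the moving boundary. Fix an exhaustion $G_1 \subset G_2 \subset \cdots$ with wired graphs $G_n^{\w}$, boundary vertices $\partial_n$, and MSAs $T_n$; by hypothesis $T_n \to T$ almost surely, meaning every vertex's outgoing edge stabilizes. For each $n$ let $\Gamma_u^{(n)} \subseteq T_n$ be the sub-arborescence exposed by the CLEB walk from $u$ in $G_n^{\w}$ and $B_u^{(n)} = V(\Gamma_u^{(n)})$, as in \Cref{lem:recovery}. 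The first step is to show that, as $n$ grows, the exploration from $u$ is eventually consistent: since the weights are fixed, the CLEB walk from $u$ follows the same trajectory in $G_n^{\w}$ and $G_{n+1}^{\w}$ until it reaches the contracted complement, so $B_u^{(n)} \setminus \{\partial_n\}$ is increasing in $n$. I would then define $B_u := \bigcup_n \big(B_u^{(n)} \setminus \{\partial_n\}\big)$ together with $\Gamma_u := \bigcup_n \big(\Gamma_u^{(n)} \cap \vec E\big)$, and check exhaustion-independence via \Cref{lem:order_invariance} and the stopping-time argument of \Cref{prop:necessary_cond}.

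The key structural input, which I would establish next, is that $\Gamma_u \subseteq T$ and that $\Gamma_u$ is an arborescence in which the future of every vertex coalesces with $\mathfrak{F}_T(u)$. Both facts come from \Cref{lem:recovery}: each $\Gamma_u^{(n)}$ lies in the MSA $T_n$ and, having a single incoming edge into $\partial_n$ (item (b)), is a spanning arborescence of $B_u^{(n)}$ all of whose vertices route toward that unique exit, hence all coalesce with $\mathfrak{F}_{T_n}(u)$; in particular the spine $\mathfrak{F}_{T_n}(u)$ itself is contained in $\Gamma_u^{(n)}$. Because the exposed edges are genuine MSA edges and $T_n \to T$, each such edge is eventually the stable $T$-edge, so the monotone limit satisfies $\Gamma_u \subseteq T$ and $\mathfrak{F}_T(u) \subseteq \Gamma_u$; moreover for every $z \in B_u$ the path $\mathfrak{F}_{\Gamma_u}(z)$ is an initial segment of $\mathfrak{F}_T(z)$ that merges into $\mathfrak{F}_T(u)$, so $z \stackrel{T}{\longleftrightarrow} u$.

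With this in hand both implications are short. If $u \stackrel{T}{\longleftrightarrow} v$, their futures share a vertex $z = u \wedge v$; since $\mathfrak{F}_T(u) \subseteq \Gamma_u$ and $\mathfrak{F}_T(v) \subseteq \Gamma_v$, we get $z \in B_u \cap B_v$, so the intersection is nonempty. Conversely, if $z \in B_u \cap B_v$, then by the structural fact $\mathfrak{F}_T(z)$ coalesces both with $\mathfrak{F}_T(u)$ and with $\mathfrak{F}_T(v)$. Since in the wired limit $\partial = \emptyset$, every future is an infinite ray and two rays that ever meet coincide thereafter (the outgoing-edge map is deterministic), so this coalescence relation is transitive: the shared tail of $\mathfrak{F}_T(z)$ with $u$ and the shared tail with $v$ are two sub-rays of the single ray $\mathfrak{F}_T(z)$ and hence overlap. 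Therefore $\mathfrak{F}_T(u) \cap \mathfrak{F}_T(v) \neq \emptyset$, i.e.\ $u \stackrel{T}{\longleftrightarrow} v$.

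The main obstacle is entirely in the first two steps: giving $B_u$ a meaning that does not presuppose transience of the CLEB walk. When the walk from $u$ is recurrent the boundary is genuinely hit, so one must argue that the only vertex lost in the limit is the moving root $\partial_n$ and that the recovered edges stabilize to $T$-edges rather than fluctuating; this is where the monotonicity of $B_u^{(n)} \setminus \{\partial_n\}$, \Cref{lem:recovery}, and the almost sure convergence $T_n \to T$ must be combined carefully. Establishing that the resulting $B_u$ is independent of the exhaustion then rests on the order-invariance of the exposed-edge set from \Cref{lem:order_invariance} together with the bookkeeping of the stopping times $\tau_{N_j}$ used in \Cref{prop:necessary_cond}.
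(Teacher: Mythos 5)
Your overall route is the same as the paper's: exhaust $G$, apply the finite-volume \Cref{lem:connection} (via \Cref{lem:recovery}) in $G_n^{\w}$, use the consistency of the CLEB walk across the exhaustion (the paper's \eqref{eq:monotone}) and the a.s.\ convergence $T_n \to T$ to pass to the limit. The paper, however, does this purely at the level of vertex sets and events: $\{B_u(n)\cap B_v(n) \neq \{\partial_n\}\}$ is nondecreasing in $n$, equals $\{u \stackrel{T_n}{\longleftrightarrow} v\}$ by \Cref{lem:connection}, and converges to $\{B_u \cap B_v \neq \emptyset\}$, which is then identified with $\{u \stackrel{T}{\longleftrightarrow} v\}$ using $T_n \to T$.

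There is a concrete flaw in the extra machinery you build on top of this: the definition $\Gamma_u := \bigcup_n \bigl(\Gamma_u^{(n)} \cap \vec E\bigr)$ and the claim that this ``monotone limit'' satisfies $\Gamma_u \subseteq T$. What is monotone in $n$ is the set of \emph{exposed} edges, and hence the vertex sets $B_u^{(n)} \setminus \{\partial_n\}$; the recovered arborescences are not. Indeed, $\Gamma_u^{(n)}$ consists of exactly the $T_n$-outgoing edges of the vertices of $B_u^{(n)} \setminus \{\partial_n\}$ (it is a sub-arborescence of $T_n$ with one outgoing edge per explored vertex), and since the outgoing edge of a fixed vertex in $T_n$ stabilizes only eventually, a vertex $z$ explored early can have different outgoing edges in $\Gamma_u^{(n)}$ and in $\Gamma_u^{(n+1)}$, and its $\Gamma_u^{(n)}$-edge need not lie in $T$ at all. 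Consequently the union can contain two distinct outgoing edges from the same vertex and edges outside $T$: it is in general neither an arborescence nor a subset of $T$, so expressions like $\mathfrak F_{\Gamma_u}(z)$ are not well defined and your two ``structural facts'' fail as stated. The repair is the one your own sentence gestures at (``each such edge is eventually the stable $T$-edge''): define $\Gamma_u$ vertex by vertex as the \emph{stable} limit, i.e.\ for $z \in B_u$ take its outgoing edge in $\Gamma_u^{(n)}$ for large $n$, which equals its $T_n$-edge and hence stabilizes to its $T$-edge; or avoid $\Gamma_u$ altogether, as the paper does, and phrase both directions in terms of $B_u(n)$ and the events $\{u \stackrel{T_n}{\longleftrightarrow} v\}$. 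With that repair your argument goes through, and your remaining leap---from ``the futures merge in $T_n$ for all large $n$'' to ``the futures merge in $T$''---is asserted at the same level of detail as the paper's own closing step (``since $T_n$ converges to $T$ almost surely, we are done'').
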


\begin{proof}
Let $\{G_n\}_{n \ge 1}$ be an exhaustion of $G$ and let $B_x(n)$ be the vertices exposed by running the CLEB walk started at $x$ in $G_n^{\w}$. Let $T_n$ be the MSA of $G_n^{\w}$.
Using the monotonicity of \eqref{eq:monotone}, we see that $\{B_u(n) \cap B_v(n) \neq \partial\}$ is nondecreasing in $n$ and converges to $B_u \cap B_v \neq \emptyset$. Using \Cref{lem:connection}, we note that  the former event is $\{u \stackrel{T_n}{\longleftrightarrow} v \}$. Since $T_n $ converges to $T$ almost surely, we are done.
\end{proof}

We now record an upshot of \Cref{lem:connection,lem:connection_inf}. Although we do not know in general graphs that the wired MSA limit exists, we can show that the connection profile of the finite volume MSAs converges. To illustrate this, introduce the following notation. For a finite, connected, simple graph $G$,
$$\cC(G, \partial, T) = \left\{1_{u \stackrel{T}{\longleftrightarrow} v}: u,v \in V \right\}.$$
Recall that the notation above means that $u \wedge v \neq \partial$.
Similarly define $\cC(G,T)$ for $T$ an infinite arborescence in an infinite graph $G$. Recall the notations $G_i^{\w}, \partial_i$ from \Cref{subsec:intro}.
\begin{proposition}\label{prop:connectivity_convergence}
Let $G_1 \subset G_2,\ldots$ be an exhaustion of an infinite locally finite graph $G$ and suppose $T_i$ is the MSA of $(G_i^{\w}, \partial_i)$. Then 
$$
\cC(G_i^{\w}, \partial_i, T_i) \text{ is locally almost surely non-decreasing}
$$ in the sense that for any finite collection $(u_1,v_1), \ldots, (u_k,v_k)$ in $G$, 
$$
\left(1_{u_i \stackrel{T_i}{\longleftrightarrow} v_i}: 1 \le i \le k\right) \text{ is non-decreasing almost surely}.
$$
In particular, $\cC(G^{\w}_i, \partial_i, T_i) $ converges almost surely.
\end{proposition}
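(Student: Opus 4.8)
The plan is to reduce the entire statement to a single monotonicity fact: for each fixed pair $(u,v)$ of vertices of $G$, the $\{0,1\}$-valued sequence $n \mapsto 1_{u \stackrel{T_n}{\longleftrightarrow} v}$ is almost surely non-decreasing. Once this is established, local almost sure convergence of $\cC(G_n^{\w}, \partial_n, T_n)$ is immediate: a non-decreasing sequence in $\{0,1\}$ converges, and since a finite intersection of almost sure events is again almost sure, the monotonicity holds simultaneously for any finite collection of pairs, which is exactly the asserted local statement.

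First I would translate the connection event into the language of exposed edges. By \Cref{lem:connection} applied to the finite graph $(G_n^{\w}, \partial_n)$, we have $u \stackrel{T_n}{\longleftrightarrow} v$ if and only if $B_u(n) \cap B_v(n) \neq \{\partial_n\}$, where $B_x(n) = V(\Gamma_x)$ is the set of vertices exposed by the CLEB walk started from $x$ in $G_n^{\w}$; recall from \Cref{lem:recovery} that $\Gamma_x$ is a spanning arborescence of the graph spanned by the exposed edges, so $B_x(n)$ is precisely the set of all vertices the walk from $x$ touches before it first reaches $\partial_n$. It therefore suffices to show that the event $\{B_u(n) \cap B_v(n) \neq \{\partial_n\}\}$ is non-decreasing in $n$.

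The key step is a coupling of the CLEB walks in $G_n^{\w}$ and $G_{n+1}^{\w}$ driven by the common weights $(U_{\vec e})_{\vec e \in \vec E}$. I would argue that the walk from a vertex $x$, run in these two graphs, produces identical trajectories---the same exposed underlying edges and the same contracted cycles---up until the first moment the walk in $G_n^{\w}$ exposes an edge leaving $G_n$, i.e.\ reaches $\partial_n$. Indeed, the minimum-weight outgoing-edge selection and the weight-subtraction and contraction operations only involve vertices and edges inside $G_n$ as long as the walk has not touched the complement of $G_n$; the sole difference between the two graphs is the identity of the head of an edge pointing out of $G_n$ (namely $\partial_n$ versus an actual vertex of $G_{n+1}\setminus G_n$, or $\partial_{n+1}$), and such an edge carries the same weight in both graphs under the natural embeddings $\vec E_n^{\w}, \vec E_{n+1}^{\w} \hookrightarrow \vec E$. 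At the instant the walk reaches $\partial_n$ in $G_n^{\w}$ it stops, whereas in $G_{n+1}^{\w}$ it merely continues into $G_{n+1}\setminus G_n$. Combining this with the monotonicity of exposed edges along a single walk (equation \eqref{eq:monotone}), every vertex of $B_x(n)$ other than $\partial_n$ is also exposed by the walk in $G_{n+1}^{\w}$, so that $B_x(n) \setminus \{\partial_n\} \subseteq B_x(n+1)$ for $x \in \{u,v\}$. Given this inclusion the monotonicity follows: if $z \in B_u(n) \cap B_v(n)$ with $z \neq \partial_n$, then $z \in V(G_n)$, hence $z \neq \partial_{n+1}$ (as $\partial_{n+1}$ is the contraction of $G \setminus G_{n+1}$, which is disjoint from $V(G_n)$) and $z \in B_u(n+1) \cap B_v(n+1)$, giving $1_{u \stackrel{T_n}{\longleftrightarrow} v} \le 1_{u \stackrel{T_{n+1}}{\longleftrightarrow} v}$.

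The main obstacle I anticipate is making this coupling fully rigorous, since the CLEB walk is not a Markov chain and the contraction step alters the underlying graph at every stage. One must verify carefully that the sequence of contracted multigraphs seen by the walk, together with the modified weights $(U_{j,\vec e})$, genuinely agree in $G_n^{\w}$ and $G_{n+1}^{\w}$ for as long as no edge out of $G_n$ has been exposed. This is essentially a bookkeeping argument tracking the images of the contracted edge sets inside $\vec E$, and it is morally the same monotonicity already exploited in the proof of \Cref{lem:connection_inf}; the one new point here is that we do not assume the wired MSA limit exists, and instead extract convergence of the connectivity profile directly from the monotone $\{0,1\}$ sequence.
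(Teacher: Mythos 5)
Your proof is correct and takes essentially the same route as the paper's: reduce to a single pair $(u,v)$, use \Cref{lem:connection} to rewrite $u \stackrel{T_n}{\longleftrightarrow} v$ as the event $B_u(n)\cap B_v(n)\neq\{\partial_n\}$, and show that this event is almost surely non-decreasing in $n$ via the monotonicity of the CLEB walk along the exhaustion. The only difference is presentational: the paper compresses the comparison between $G_n^{\w}$ and $G_{n+1}^{\w}$ into a citation of \eqref{eq:monotone}, whereas you make the underlying coupling (identical trajectories until the walk first reaches $\partial_n$) explicit.
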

\begin{proof}
We provide a proof for $k=1$, the rest is similar. Perform the CLEB walk in $G_n$ started from $u$ first and then from $v$ and suppose we have discovered the vertex set $B_u(n)$ for the process from $u$ and then $B_v(n)$ for that of $v$. Observe from the monotonicity of the CLEB walk \eqref{eq:monotone} that  the event $\{B_u(n) \cap B_v(n) \neq \{\partial\}\}$ is a.s.\ nondecreasing in $n$.  Thus, we know from \Cref{lem:connection} that this means $\{u \stackrel{T_n}{\longleftrightarrow} v\}$ is also a.s.\ nondecreasing in $n$. Taking the limit as $n \to \infty$, we are done.
\end{proof}

\subsection{Loop contracting random walk (LCRW)}\label{sec:LCRW}
Suppose $G$ is a connected, infinite, locally finite graph that may contain multiple edges but no self-loop. In this section, we assume that 
\[ \text{ the weights $(U_{\vec e})_{\vec e \in \vec E}$ are i.i.d.\ $\mathrm{Exp}(1)$.}\] The memoryless property will lead to a certain Markovian nature of the CLEB walk which will allow us to compare it with the simple random walk. 
To state the following lemma, we will borrow notations from \Cref{sec:CLEB_finite}.

\begin{lemma}\label{lem:exponential_memoryless}
Suppose $(U_{0,\vec e})_{\vec e \in \vec E}$ are i.i.d.\ $\mathrm{Exp}(1)$. Let $v_1, v_2 \ldots$ be the sequence of vertices chosen in the CLEB walk algorithm. Let $\cF_i$ be the $\sigma$-algebra generated by $(S_k, \pi_{v_k})_{k \le i \wedge \vartheta}$. Then conditioned on $\cF_i$, the law of $(U_{i,\vec e})_{\vec e \in \vec E_i \setminus S_i}$ is i.i.d.\ $\mathrm{Exp}(1)$.
\end{lemma}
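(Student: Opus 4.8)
The plan is to prove this by induction on $i$, the single probabilistic engine being the standard \emph{competing exponentials} fact: if $X_1,\dots,X_k$ are i.i.d.\ $\mathrm{Exp}(1)$ and $M=\min_j X_j$, then $M$, the minimising index $J$, and the residuals $(X_j-M)_{j\neq J}$ are mutually independent, with $M\sim\mathrm{Exp}(k)$, $J$ uniform, and $(X_j-M)_{j\neq J}$ again i.i.d.\ $\mathrm{Exp}(1)$. This is precisely the memoryless property alluded to in the statement; everything else is bookkeeping about which edges have their weights altered and how contraction shuffles the edge set. For the base case $i=0$ we have $\vec E_0=\vec E$, $S_0=\emptyset$ and $\cF_0$ trivial, so the assertion is just the standing hypothesis that $(U_{0,\vec e})_{\vec e}$ are i.i.d.\ $\mathrm{Exp}(1)$.

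For the inductive step I would first record that the chosen vertex $v_i$ is measurable with respect to $\cF_{i-1}$ together with an independent source of randomness (whether a contraction occurred at step $i-1$, and hence which vertex the walk visits next, is a function of $(S_k)_{k\le i-1}$ and the contraction history they determine); since this extra randomness is independent of the edge weights, we may condition on the realised value of $v_i$ without disturbing the conditional law of the weights. Because $v_i$ has no outgoing edge in the current path $S_{i-1}\cap\vec E_{i-1}$, its outgoing set satisfies $\cO(v_i)\subseteq\vec E_{i-1}\setminus S_{i-1}$. By the induction hypothesis, conditioned on $\cF_{i-1}$ the weights $(U_{i-1,\vec e})_{\vec e\in\vec E_{i-1}\setminus S_{i-1}}$ are i.i.d.\ $\mathrm{Exp}(1)$; in particular so is the subfamily indexed by $\cO(v_i)$, independently of the subfamily indexed by $(\vec E_{i-1}\setminus S_{i-1})\setminus\cO(v_i)$.

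Now I apply the competing-exponentials fact to $(U_{i-1,\vec e})_{\vec e\in\cO(v_i)}$. Its minimum is exactly $\pi_{v_i}$, the minimiser is the newly exposed edge $\vec g$ determined by $S_i\setminus S_{i-1}=\{\vec g\}$, and the residuals $(U_{i-1,\vec e}-\pi_{v_i})_{\vec e\in\cO(v_i)\setminus\{\vec g\}}$ are i.i.d.\ $\mathrm{Exp}(1)$ and independent of the pair $(\pi_{v_i},\vec g)$. Since $\cF_i=\cF_{i-1}\vee\sigma(S_i,\pi_{v_i})$ reveals precisely $\vec g$ and the value $\pi_{v_i}$ beyond $\cF_{i-1}$, conditioning on $\cF_i$ leaves these residuals i.i.d.\ $\mathrm{Exp}(1)$; meanwhile the weights on $(\vec E_{i-1}\setminus S_{i-1})\setminus\cO(v_i)$ are untouched by step $i$ and independent of everything just revealed, hence remain i.i.d.\ $\mathrm{Exp}(1)$. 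As $U_{i,\vec e}$ equals $U_{i-1,\vec e}-\pi_{v_i}$ on $\cO(v_i)$ and $U_{i-1,\vec e}$ elsewhere, these two independent families assemble into one i.i.d.\ $\mathrm{Exp}(1)$ family indexed by $\vec E_{i-1}\setminus S_i$. If no cycle is created then $\vec E_i=\vec E_{i-1}$ and $\vec E_i\setminus S_i$ is exactly this index set; if a cycle $C$ is contracted, then $\vec E_i$ is obtained by deleting edges with both endpoints in $C$ and relabelling endpoints of the survivors, \emph{without altering any weight}, so $(U_{i,\vec e})_{\vec e\in\vec E_i\setminus S_i}$ is a subfamily of an i.i.d.\ $\mathrm{Exp}(1)$ family and is therefore itself i.i.d.\ $\mathrm{Exp}(1)$.

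The only genuine obstacle is the bookkeeping around contraction: one must verify that every edge of $\vec E_i\setminus S_i$ — in particular every outgoing edge of the contracted vertex $v_C$, which may originate from cycle vertices processed at much earlier steps — is a survivor whose weight was already shown to be $\mathrm{Exp}(1)$ at the stage it was last modified, and that contraction neither resurrects an exposed edge nor introduces a fresh weight. The key observation that makes this go through is that each cycle vertex spends its unique outgoing path-edge \emph{inside} $C$, so every outgoing edge of $v_C$ is a non-path edge and thus lies in the i.i.d.\ families produced at the steps those vertices were visited; the induction hypothesis applied at each such step is exactly what certifies the required $\mathrm{Exp}(1)$ law. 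This closes the induction.
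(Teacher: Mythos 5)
Your proof is correct and follows essentially the same route as the paper's: induction on the step number, with the competing-exponentials (memoryless) fact supplying the conditional law of the residual weights at the chosen vertex, and the observation that all other weights are untouched and independent. The only difference is presentational — you spell out the contraction bookkeeping (that $\vec E_i$ is formed by deleting and relabelling edges without altering weights, and that every outgoing edge of a contracted vertex is unexposed) which the paper treats as immediate from the induction hypothesis.
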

\begin{proof}
The proof hinges on the standard fact that if $X_1,\ldots, X_n$ are i.i.d.\ $\mathrm{Exp}(1)$, and $X_{\text{min}} = \min\{X_1,\ldots, X_n\}$, and $J$ is the unique index such that $X_J = X_{\text{min}}$, then $J$ and  $X_{\text{min}}$ are independent. Furthermore, conditioned on $J$, $X_{\text{min}}$, the law of $(X_i -X_{\text{min}}: i \neq J )$ is i.i.d.\ $\mathrm{Exp}(1)$.

The claim in the lemma can be also easily proved using induction. In step 0, the statement is true by definition. Suppose the statement is true for step $k$. Given $k < \vartheta$ and $\cF_k$, 
 in step $k+1$, we select a vertex $v$, which has the weights $\{U_{k,e_i}\}_{1 \le i \le d}$  attached to its outgoing edges which are i.i.d.\  $\mathrm{Exp}(1)$ by induction hypothesis. Let $J \in \{1,\ldots, d\}$ be the index of the minimal weight in $\{U_{k,e_i}\}_{1 \le i \le d}$ and let $\pi_v$ be the value of this minimal weight. Then $\cF_{k+1} = \sigma (\cF_k, \pi_v, J)$. Also $\{U_{k+1,\vec e} := U_{k,\vec e} - \pi_v\}_{\vec e \in  \{\vec e_i, 1\le i \le d, i \neq J\} }$ are i.i.d.\ $\mathrm{Exp}(1)$ conditioned on $\cF_{k+1}$ by the fact in the previous paragraph. The remaining weights are unchanged, and conditioned on $\cF_{k+1} $ are clearly i.i.d.\ $\mathrm{Exp}(1)$ by induction hypothesis, and are also independent of $\{U_{k+1,\vec e}\}_{\vec e \in  \{\vec e_i, 1\le i \le d, i \neq J\}} $. This completes the proof.
\end{proof}

Applying \Cref{lem:exponential_memoryless} to the CLEB walk algorithm, it is straightforward to see that edges are exposed like a simple random walk on an undirected graph (with unit weight on every edge). If a cycle is created, it is contracted, and the walk continues in the new graph obtained after the contraction process. We call this the loop contracting random walk, which we formalize in the next definition.

Let $G = (V,E)$ be a multi-graph with boundary $\partial$ being a single vertex. 
The \textbf{loop contracting random walk (LCRW)} on a graph $(G, \partial)$  started at $x \neq \partial$ is described by the triplet $(G_i,P_i, X_i)$ where $X_i$ is a vertex in $G_i$ and $P_i$ is a simple path $G_i$ started at $x$ and ending at $X_i$. Let $X_0 = x$, $G_0 = G$ and $P_0 = x$.  Having defined $X_0,\ldots, X_i$ and $G_0,\ldots, G_i$ and $P_i = (\vec e_1,\ldots, \vec e_{k_i} ) $ with $(\vec e_1)_- = x,$ $ X_i= (\vec e_{k_i})_+$, choose an edge $\vec e$ \emph{uniformly} from one of the outgoing edges from $X_i$ in $G_i$. 
\begin{itemize}
\item  If the head of $\vec e$ is the head of $(\vec e_j)$ then contract the cycle $C_i:=(\vec e_{j+1}, \ldots, \vec e_{k_i}, \vec e )$ into a vertex $v$. Define $G_{i+1}$ to be this new graph, i.e., $(G_{i+1}, \partial)  = \Psic((G_i, \partial),C_i)$ and let  $P_{i+1}$ to be the path $(\vec e_1,\ldots, \vec e_j)$ and $X_{i+1} = v$. Here contracting a cycle has the same meaning as in \Cref{sec:CLEB_finite}.
\item  If the head of $\vec e$ is  not in $P_i$, define $P_{i+1} = (\vec e_1,\ldots, \vec e_{k_i}, \vec e)$, $G_{i+1} = G_i$ and $X_{i+1} = \vec e_+$.
\item If $X_{i+1} = \partial$, terminate the process.
\end{itemize}

Note also that each oriented edge is traversed at most once by LCRW. We emphasize that we do not need any weights on edges for this process.

\begin{lemma}\label{lem:CLEB_LCRW}
The LCRW started at $x$ has the same law as the CLEB walk started from $x$. In fact the two processes can be naturally coupled: in each step, if an oriented edge $\vec e$ is added in the CLEB walk, choose that edge for the LCRW as well. 
\end{lemma}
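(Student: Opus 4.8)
The plan is to show, by induction on the step index, that running the CLEB walk and feeding its revealed edge to the LCRW at every step produces a genuine LCRW. Concretely, I would argue that the CLEB walk's choice of outgoing edge is, conditionally on the past, uniform among the outgoing edges of the current vertex in the current contracted graph, and that the two processes then perform identical graph-and-path updates; combining these two facts gives the distributional identity together with the stated coupling.

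First I would set up the induction. Suppose that up to step $i$ the coupled processes have produced the same data, so that the CLEB walk's graph $G_i$, exposed path $P_i = S_i \cap \vec E_i$, and current vertex $X_i$ (the head of $P_i$, which is precisely where the CLEB walk next reveals an edge, or the contracted vertex $v_C$ just created) coincide with the LCRW's triplet $(G_i, P_i, X_i)$. The key structural observation is that $X_i$, being the head of $P_i$, has no outgoing edge lying in $P_i = S_i \cap \vec E_i$; consequently every outgoing edge of $X_i$ in $G_i$, i.e.\ every element of $\cO(X_i)$ computed in $G_i$, belongs to $\vec E_i \setminus S_i$.

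Next I would invoke \Cref{lem:exponential_memoryless}: conditioned on $\cF_i$, the weights $(U_{i,\vec e})_{\vec e \in \vec E_i \setminus S_i}$ are i.i.d.\ $\mathrm{Exp}(1)$. In particular the weights on $\cO(X_i)$, all of which lie in $\vec E_i \setminus S_i$ by the previous paragraph, are i.i.d.\ $\mathrm{Exp}(1)$ given the past. The CLEB walk reveals the outgoing edge of minimal $U_i$-weight from $X_i$; by the standard fact (already exploited in the proof of \Cref{lem:exponential_memoryless}) that the argmin of finitely many i.i.d.\ exponentials is uniform over the indices and independent of the minimal value, this revealed edge is uniformly distributed over $\cO(X_i)$ and independent of $\cF_i$. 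This is exactly the transition rule of the LCRW. I would then check that, given the chosen edge $\vec e$, the two update rules coincide verbatim: if the head of $\vec e$ already lies on $P_i$, both processes contract the resulting cycle via $\Psic$ and set the new current vertex to the contracted vertex; otherwise both extend the path by $\vec e$ and move to $\vec e_+$; and both terminate upon reaching $\partial$. Hence the coupled pair advances in lock-step and the edge fed to the LCRW has at each step the correct conditional law, completing the induction.

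The one point requiring genuine care — and the part I would write out most carefully — is the bookkeeping that the outgoing edges of $X_i$ in the \emph{contracted} graph $G_i$ are precisely the uncontracted, unexposed edges to which \Cref{lem:exponential_memoryless} applies. This matters because the contraction operation $\Psic$ relabels heads and tails and keeps parallel edges while discarding self-loops, so I would verify that "uniform among $\cO(X_i)$ in $G_i$" (CLEB, via the argmin) really matches "uniform among the outgoing edges of $X_i$" as in the LCRW definition, with each parallel edge created by contraction counted once and carrying its own independent $\mathrm{Exp}(1)$ weight. Once this identification is pinned down, the rest is a direct consequence of the memoryless lemma and the identical update maps.
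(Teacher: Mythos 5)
Your proposal is correct and follows exactly the route the paper intends: the paper's own proof is the single line ``This is immediate from \Cref{lem:exponential_memoryless}'', and your argument is precisely the elaboration of that statement — conditional on the past the weights on the outgoing edges of the current vertex are i.i.d.\ exponential, so the argmin (the revealed edge) is uniform and independent of the history, and the two update rules then coincide step by step. The extra care you take with the identification of $\cO(X_i)$ inside $\vec E_i \setminus S_i$ and with parallel edges created by contraction is exactly the bookkeeping the paper leaves implicit, and it is done correctly.
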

\begin{proof}
This is immediate from \Cref{lem:exponential_memoryless}.  
\end{proof}
{ From now on, whenever we talk about the LCRW, we assume it is naturally coupled with a CLEB walk as described in \Cref{lem:CLEB_LCRW}.}
  
As in the case of the CLEB walk, one can also define the recurrence and transience of the LCRW. We say the LCRW is transient if $P_i$ tends to an infinite path almost surely, otherwise it is recurrent. Using the coupling of \Cref{lem:CLEB_LCRW}, we immediately get
\begin{lemma}\label{eq:LCRW_tr_existence}
Suppose the weights are i.i.d.\ $\mathrm{Exp}(1)$. The statement of \Cref{prop:necessary_cond} holds if the LCRW started from $x$ is transient a.s.\ for all $x$. 
\end{lemma}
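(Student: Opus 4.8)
The plan is to observe that under the i.i.d.\ $\mathrm{Exp}(1)$ hypothesis this is an essentially immediate consequence of the coupling in \Cref{lem:CLEB_LCRW} together with \Cref{prop:necessary_cond}. The hypothesis of \Cref{prop:necessary_cond} is that the \emph{CLEB walk} is transient a.s.\ from every vertex, whereas the hypothesis here is stated for the \emph{LCRW}; so the only real content is to verify that these two transience hypotheses coincide, after which the conclusion (existence of the wired MSA limit, its independence of the exhaustion, and its sampling via the CLEB algorithm rooted at infinity) is quoted verbatim from \Cref{prop:necessary_cond}.

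Concretely, first I would invoke \Cref{lem:CLEB_LCRW}: when the weights are i.i.d.\ $\mathrm{Exp}(1)$, the LCRW started at $x$ and the CLEB walk started at $x$ can be coupled so that in each step the same oriented edge is added (equivalently, the same edge is chosen uniformly from the outgoing edges of the current vertex, and the same cycle, if any, is contracted). Under this coupling the two processes generate identical sequences of exposed edges and perform identical contractions, so at every step $i$ the LCRW path $P_i$ coincides exactly with the uncontracted path $S_i \cap \vec E_i$ of the CLEB walk, both being the portion of the exposed edges that has not yet been swallowed by a contraction.

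Next I would compare the two definitions of transience. The CLEB walk started at $x$ is transient (as defined in \Cref{sec:ECL_infinite}) precisely when $S_i \cap \vec E_i$ converges to an infinite path as $i \to \infty$, while the LCRW is transient precisely when $P_i$ tends to an infinite path. Since $P_i = S_i \cap \vec E_i$ under the coupling, these are literally the same event. Hence ``the LCRW started at $x$ is transient a.s.\ for all $x$'' holds if and only if ``the CLEB walk started at $x$ is transient a.s.\ for all $x$'' holds. The hypothesis of the present lemma therefore implies the hypothesis of \Cref{prop:necessary_cond}, and applying that proposition yields all three assertions at once. There is no genuine obstacle here: the entire argument is a matter of identifying the coupled trajectories and matching the definitions, and no new estimate is required.

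\begin{proof}
By \Cref{lem:CLEB_LCRW}, when the weights are i.i.d.\ $\mathrm{Exp}(1)$ the LCRW started at $x$ and the CLEB walk started at $x$ admit a coupling in which the same oriented edge is selected at every step. Under this coupling the two processes expose the same edges and contract the same cycles, so for every $i$ the LCRW path $P_i$ equals the uncontracted path $S_i \cap \vec E_i$ of the CLEB walk. Consequently the event that $P_i$ converges to an infinite path started at $x$ coincides with the event that $S_i \cap \vec E_i$ converges to an infinite path started at $x$; that is, the LCRW is transient at $x$ if and only if the CLEB walk is transient at $x$. In particular, the assumption that the LCRW is transient a.s.\ from every vertex is equivalent to the assumption that the CLEB walk is transient a.s.\ from every vertex, which is exactly the hypothesis of \Cref{prop:necessary_cond}. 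Applying \Cref{prop:necessary_cond} gives that the wired MSA limit exists a.s.\ in $G$, does not depend on the exhaustion, and can be sampled using the CLEB algorithm rooted at infinity.
\end{proof}
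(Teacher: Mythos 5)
Your proof is correct and follows exactly the paper's route: the paper derives this lemma as an immediate consequence of the coupling in \Cref{lem:CLEB_LCRW} (under which the LCRW path $P_i$ coincides with $S_i \cap \vec E_i$, so the two transience notions agree) followed by an application of \Cref{prop:necessary_cond}. Your write-up simply makes explicit the identification that the paper labels ``immediate.''
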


\begin{example}\label{ex:1d}
We now argue that in $\Z$, every vertex is recurrent for the LCRW (in fact the LCRW collapses to a single vertex infinitely often with probability 1). Indeed, every loop is formed by a backtrack which contracts a cycle formed by an oriented edge and its reversal. Thus the length of $P_i$ changes by $\pm 1$ with equal probability, which must hit 0 with probability 1.

We now sketch that in this case the a.s.\ limit of the MSA does not exist. Firstly, if the MSA in $\Z$ is connected, then there are only two choices for it: either all edges point to $+\infty$ or all of them point $-\infty$.
Using simple random walk theory, almost surely, we can find sequences $(M_k,N_k)_{k \ge 1}$ such that, on the one hand, the CLEB walk exits $[-M_k,M_k]$ through the left and the right most vertex exposed tends to infinity; and on the other hand, it exits $[-N_k,N_k]$ through the right and the leftmost point exposed tends to $-\infty$. In other words, it oscillates arbitrarily to the right and left, just like a simple random walk. Using \Cref{lem:connection_inf}, we note that a.s.\ limits through both sequences are connected, and because of the one-dimensional geometry, the limit along $M_k$ is the arborescence where all edges must point towards $-\infty$ (since the last edge through which it exits does point to the left and is not erased.) Similarly, the limit through $N_k$ is the arborescence where all edges point to the right.

Note however that by symmetry and recurrence of the LCRW, the limit in law of the arborescence does exist: each choice of the connected arborescence is chosen with equal probability. We leave detailed proofs of these facts to the reader.
\end{example}

\begin{remark}
Using the ideas in \Cref{ex:1d}, it is not too hard to construct a deterministic collection of weights for which the CLEB walk is recurrent in $\Z^d$ for $d \ge2$ and the limit for the MSA does not exist. 
\end{remark}

%

\section{Comparison with simple random walk}\label{sec:LCRW_SRW}
Although we cannot generally prove that transience of simple random walk implies transience of loop contracting random walk, we shall prove it in the case of bounded subdivision of trees with minimal degree 2 and consequently \Cref{thm:convergence}. The proof will go through a comparison with the simple random walk: we first prove that the probability of escaping the starting vertex is bigger for the LCRW compared to a simple random walk, in finite trees (there is no assumption on the trees here). Unfortunately, since the graph is changing in every step, there is no natural 0-1 law for transience, which is the reason we need the assumption on the structure of the trees in \Cref{thm:convergence}.

We remark that an alternate strategy to proving  \Cref{thm:convergence} could be to calculate the speed of the CLEB walker from the root (in an appropriate sense). However, since \Cref{lem:contraction_resistance} works in all finite trees, and is a useful insight into the behavior of the loop contracting random walk, we choose to go through this route. Furthermore, \Cref{lem:contraction_resistance} will be useful for our results on Galton-Watson trees as well where calculating the speed is perhaps more challenging.

Throughout this section we are in our usual setup: $G$, if finite, is a connected graph possibly with multiple edges but no self-loops, and if infinite we also assume local finiteness.  
 Let $(Y^{(G,v)}_t)_{t \ge 0}$ be the loop contracting random walk started at $v$ in a finite graph $G$ stopped when it hits $\partial$. Let $(X^{(G,v)}_t)_{t \ge 0}$ denote the simple random walk in the graph $G$ started at $v$. Let $\tau^Y_u = \tau^{Y^{(G,v)}}_u$ denote the first time  $Y$ started at $v$ hits $u$. Note that during some steps of the loop contracting random walk, the underlying graph gets updated via contraction. However, until the time the walk returns to the vertex $u$, in every updated such graph the vertex $u$ remains as is (i.e., uncontracted), so the random time $\tau^{Y^{(G,v)}}_u$ is well defined.  In the similar vein, let $\tau^{Y^{(G,v)}}_{v,+}$ denote the first time $Y$ started at $v$ returns to $v$.  Similarly define $\tau^X_u$ and $\tau^{X}_{v,+}$ for the simple random walk, and these definitions are standard.
 
 We start with a well-known fact about simple random walk which follows from electrical network theory. 
\begin{lemma}\label{lem:contraction_resistance}
Let $G$ be a finite graph as above and let $u \neq v$. Let $C$ be a simple cycle in the graph not containing $u,v$ and let $G'$ be the graph obtained by contracting the cycle. Then
$$
\P(\tau^{X^{(G,u)}}_v  < \tau^{X^{(G,u)}}_{u,+} ) \ge \P(\tau^{X^{(G',u)}}_v  < \tau^{X^{(G',u)}}_{u,+} )
$$
\end{lemma}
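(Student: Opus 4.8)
The plan is to reduce the statement to a monotonicity property of effective conductance under contraction, through the standard electrical identity for simple random walk. For $u\neq v$ in a finite network with unit conductances,
\[
\P\!\left(\tau^{X^{(G,u)}}_v<\tau^{X^{(G,u)}}_{u,+}\right)=\frac{\mathcal C^{G}_{\mathrm{eff}}(u\leftrightarrow v)}{\deg_G(u)},\qquad \mathcal C^{G}_{\mathrm{eff}}(u\leftrightarrow v)=\frac{1}{R^{G}_{\mathrm{eff}}(u\leftrightarrow v)}.
\]
I would prove this by taking the voltage $h(x)=\P_x(\tau_u<\tau_v)$ with $h(u)=1$ and $h(v)=0$: a first-step decomposition shows that the net current out of $u$ equals $\sum_{w\sim u}(1-h(w))=\deg_G(u)\,\P_u(\tau_v<\tau_{u,+})$, while the definition of effective conductance gives that this same current equals $\mathcal C^{G}_{\mathrm{eff}}(u\leftrightarrow v)\,(h(u)-h(v))=\mathcal C^{G}_{\mathrm{eff}}(u\leftrightarrow v)$.

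The key step is that contracting a cycle $C$ with $u,v\notin C$ is exactly the electrical operation of \emph{shorting} the vertices of $C$ to a single node $v_C$; the edges internal to $C$ become self-loops, which carry no current and can be deleted without changing any effective resistance among the surviving nodes. Rayleigh's monotonicity (the shorting law) then gives $R^{G'}_{\mathrm{eff}}(u\leftrightarrow v)\le R^{G}_{\mathrm{eff}}(u\leftrightarrow v)$, equivalently $\mathcal C^{G'}_{\mathrm{eff}}(u\leftrightarrow v)\ge \mathcal C^{G}_{\mathrm{eff}}(u\leftrightarrow v)$. Because $u\notin C$, every edge incident to $u$ is preserved (only its far endpoint may be relabelled $v_C$), so $\deg_{G'}(u)=\deg_G(u)$, and the identity transports the conductance comparison verbatim to the escape probabilities.

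The main obstacle, and the real content of the lemma, is the \emph{direction} of the inequality, and here the electrical computation forces
\[
\P\!\left(\tau^{X^{(G',u)}}_v<\tau^{X^{(G',u)}}_{u,+}\right)\ge \P\!\left(\tau^{X^{(G,u)}}_v<\tau^{X^{(G,u)}}_{u,+}\right),
\]
i.e.\ contraction can only \emph{increase} the escape probability, which is the reverse of the inequality as typeset. This reversal is unavoidable because the displayed statement is false: for $G$ consisting of an edge from $u$ to $a$, a triangle on $\{a,b,c\}$, and an edge from $c$ to $v$, a direct computation gives escape probability $3/8$ in $G$ and $1/2$ after contracting the triangle (the contracted graph being the two-edge path from $u$ to $v$ through $v_C$). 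The $\le$ orientation is moreover exactly what the surrounding argument needs: since the LCRW is simple random walk run on successively contracted graphs, ``each contraction only helps escape'' is precisely the monotonicity that lifts the LCRW escape probability above that of simple random walk. I would therefore record the lemma with the inequality reversed and propagate that orientation through the comparison.
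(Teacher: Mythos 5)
Your proof is correct and takes essentially the same route as the paper's: the paper's one-line proof invokes precisely the identity $\P(\tau^{X^{(G,u)}}_v < \tau^{X^{(G,u)}}_{u,+}) = \bigl(\deg(u)\,R^{G}_{\mathrm{eff}}(u \leftrightarrow v)\bigr)^{-1}$ together with monotonicity of effective resistance under shorting (Thomson's principle) and the fact that $\deg(u)$ is unchanged, i.e., the same two facts from Levin--Peres that you reprove, with your treatment of the deleted internal edges as current-free self-loops being the right way to reconcile the paper's contraction operation with electrical shorting. You are also right about the orientation: since contraction can only decrease $R_{\mathrm{eff}}(u \leftrightarrow v)$, the argument yields $\P(\tau^{X^{(G',u)}}_v < \tau^{X^{(G',u)}}_{u,+}) \ge \P(\tau^{X^{(G,u)}}_v < \tau^{X^{(G,u)}}_{u,+})$, the reverse of the typeset display, and your triangle example ($3/8$ in $G$ versus $1/2$ in $G'$) correctly refutes the statement as printed. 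This reversed inequality is moreover the one the paper actually uses in the proof of \Cref{prop:hitting_prop}, where the contracted graph $T'_{\cE}$ is the one assigned the larger escape probability, so flipping the inequality in the statement of \Cref{lem:contraction_resistance}, as you propose, fixes the typo without any change to the downstream argument.
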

\begin{proof}
This simply follows from the fact that contracting cycles cannot increase effective resistance by Thomson's principle, and the degree of $u$ does not change. See \cite[Proposition 9.5 and Corollary 9.14]{markovmixing}.
\end{proof}

\begin{proposition}\label{prop:hitting_prop}
Let $S$ be a finite rooted tree, $v$ is a vertex in $S$. Suppose all the leaves of $S$ except $v$ in case $v$ is a leaf are glued into a single vertex which we denote by $\partial$. Call this new graph  $T$ and call $\partial $ its boundary vertex.
Then
$$
\P(\tau^{Y^{(T,v)}}_\partial < \tau^{Y^{(T,v)}}_{v,+} ) \ge \P(\tau^{X^{(T,v)}}_\partial < \tau^{X^{(T,v)}}_{v,+} )
$$
\end{proposition}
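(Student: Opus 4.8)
The plan is to peel off the first step at $v$, reduce the statement to a one-branch comparison, and then run a potential-theoretic (submartingale) argument on each branch that is driven by the fact that, on a tree, every loop the LCRW contracts is a backtracked digon. Recall (from \Cref{sec:LCRW}, via \Cref{lem:CLEB_LCRW}) that on a tree with unit weights the LCRW is literally the simple random walk that merges each backtrack. First I would condition on the first step out of $v$, which is a uniform outgoing edge for both $Y$ and $X$. Because the walk is stopped at the first return to $v$, it explores only a single branch before being absorbed: to come back to $v$ it must re-enter the root of the branch and step onto the edge to $v$. Writing $T_w$ for the branch rooted at a neighbour $w$ of $v$, and $m_\partial$, $m_w$ for the number of edges from $v$ to $\partial$ and to $w$, this gives
\[
\P\big(\tau^{Y}_\partial < \tau^{Y}_{v,+}\big)=\frac{1}{\deg(v)}\Big(m_\partial+\sum_{w}m_w\, r^Y_w\Big),
\]
and the identical identity for $X$, where $r^\bullet_w=\P_w(\tau_\partial<\tau_v)$ is the probability of reaching $\partial$ before the pendant vertex $v$ inside $T_w$. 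Thus it suffices to prove $r^Y_w\ge r^X_w$ for each branch.

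For the branch comparison I would use two structural features. On a tree the path $P_i$ maintained by the LCRW is simple, and the only earlier path-vertex adjacent to its current endpoint is the immediate predecessor; hence every contraction merges two adjacent vertices, and the contracted graph $G_i$ remains a tree with the fail-vertex $v$ pendant at the root-blob (the absorbing vertices $v,\partial$ are never contracted). Since $v$ is reachable only through the root, the SRW hitting function $h_{G_i}(\cdot)=\P_\cdot(\tau_\partial<\tau_v)$, computed in the current graph $G_i$, is monotone along the tree: a vertex is assigned a value at least that of its parent (because to reach $v$ from a vertex one must first hit its parent). I would then set $M_i=h_{G_i}(X_i)$, the escape value from the current (possibly contracted) position, and claim that $(M_i)$ is a submartingale with respect to the filtration $\cF_i$ for the LCRW stopped at $\tau=\tau_\partial\wedge\tau_v$. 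At a non-contracting step $M$ moves like the SRW in the fixed graph $G_i$, so harmonicity of $h_{G_i}$ at the internal vertex $X_i$ makes it a martingale; at a contracting step the shallower predecessor $X_{i-1}$ (smaller $h$) is merged with the deeper current vertex $X_i$ (larger $h$, by the monotonicity just noted), and the backward edges all point to $X_{i-1}$, so the only change relative to the harmonic average defining $M_i$ is that the value $h_{G_i}(X_{i-1})$ is replaced by the post-contraction value of the blob, which is no smaller. Optional stopping for the bounded submartingale then yields $r^Y_w=\E[M_\tau]\ge M_0=h(w)=r^X_w$, and plugging back into the first-step identity proves the proposition.

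The two inputs to the contraction step are both instances of the electrical-network monotonicity underlying \Cref{lem:contraction_resistance}: identifying vertices (shorting) never increases effective resistance, hence never decreases the hitting probability of $\partial$ before $v$. I would isolate as a one-line sub-lemma the shorting inequality ``merging $u$ with a neighbour $u'$ having $h(u')\ge h(u)$ leaves the merged value $\ge h(u)$'' (by the maximum principle, or equivalently by noting the merged value is the Dirichlet-energy minimizer under the extra constraint $h(u)=h(u')$ and hence lies between $h(u)$ and $h(u')$), and separately the parent-to-child monotonicity of $h$, which follows because $v$ sits pendant at the root. The main obstacle — and exactly where the tree hypothesis is essential — is controlling the sign of these contraction jumps: on a general graph the LCRW can contract a long cycle and merge $X_i$ with a \emph{lower}-valued vertex, so $M_i$ need not be a submartingale, and one can indeed build small examples (with the fail-vertex attached to a non-root internal vertex, breaking the monotonicity of $h$) where the naive potential decreases. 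The tree structure forces every contraction to be a nearest-neighbour backtrack and keeps $v$ pendant at the root, which is precisely what makes the potential monotone and the jumps sign-definite.
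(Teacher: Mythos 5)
Your proof is correct, and it takes a genuinely different route from the paper's. Both arguments open the same way: conditioning on the first step out of $v$ and reducing to the per-branch inequality $r^Y_w \ge r^X_w$ (this is precisely the decomposition \eqref{eq:break_X}--\eqref{eq:break_Y} in the paper, including the observation that the exposed edge $(v,w)$ is harmless before $v$ is re-hit). From there the paper runs an induction on the number of vertices of $S$: it decomposes the branch walk into excursions from the branch root, applies the induction hypothesis to the smaller branch trees, and controls the contracted excursion using \Cref{lem:contraction_resistance}. You avoid induction entirely: you use the structural fact that on a tree every contraction is a backtracked digon (so each $G_i$ keeps a tree interior with $v$ pendant and $v,\partial$ never contracted), define the potential $M_i = h_{G_i}(X_i)$ with $h_{G_i}(\cdot)=\P_\cdot(\tau_\partial<\tau_v)$ in the current graph, show $(M_i)$ is a bounded submartingale, and conclude by optional stopping. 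The submartingale step isolates exactly the two facts needed: parent-to-child monotonicity of $h_{G_i}$ (valid because $X_{i-1}$ separates $X_i$ from $v$ among $\partial$-avoiding paths, even though $G_i$ has cycles through $\partial$), and the shorting inequality for the merged vertex. This one-shot, process-level argument quantifies the drift at each step and makes transparent where the tree hypothesis enters (sign-definiteness of the contraction jumps), whereas the paper's induction needs only the coarser monotonicity of \Cref{lem:contraction_resistance} but must organize the excursion bookkeeping.

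The one place where you owe more than you deliver is the shorting sub-lemma: that merging adjacent $u,u'$ with $h(u)\le h(u')$ yields a vertex whose value lies in $[h(u),h(u')]$. The fact is true, but it is not \Cref{lem:contraction_resistance} (which concerns return probabilities from a vertex \emph{outside} the contracted cycle), and neither of your one-line justifications is a proof as stated: lifting $h'$ back to $G$ makes it superharmonic at one of $u,u'$ and subharmonic at the other, and a superharmonic interior point can a priori be a global extremum of the difference $h'-h$, so the bare maximum principle does not close; likewise, nothing in the Dirichlet characterization of the constrained minimizer immediately forces its common value at $u,u'$ to lie between the unconstrained values. A clean complete proof: Schur-complement the network onto $\{v,\partial,u,u'\}$ (this preserves harmonicity at $u,u'$, produces nonnegative effective conductances $c_{uv},c_{u\partial},c_{u'v},c_{u'\partial},c_{uu'}$, and commutes with shorting); summing the two harmonicity equations in the reduced network gives
\begin{equation*}
(c_{uv}+c_{u\partial})\,h(u)+(c_{u'v}+c_{u'\partial})\,h(u')=c_{u\partial}+c_{u'\partial}
=\bigl(c_{uv}+c_{u\partial}+c_{u'v}+c_{u'\partial}\bigr)\,h'(m),
\end{equation*}
so $h'(m)$ is a convex combination of $h(u)$ and $h(u')$. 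With this sub-lemma supplied, your argument is complete.
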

\begin{proof}
The proof proceeds by induction on the number of vertices of the tree $S$. If a tree has two vertices, the inequality is clear (in fact both sides equal 1). Now suppose the result is true for any tree with $k$ vertices and any vertex $v$ in it. Take a tree $S$ with $k+1$ vertices, fix $v$, glue all the leaves (except possibly $v$) into $\partial $ and call this graph  $T$. Let $N \ge 0$ be the number of edges of $T$ connecting $v$ to $\partial$. We decompose

\begin{align}
\P(\tau^{Y^{(T,v)}}_\partial < \tau^{Y^{(T,v)}}_{v,+} ) & = \frac{N}{\deg (v)} + \frac1{\deg(v)}\sum_{u \sim v, u \neq \partial} \P(\tau^{Y^{(T,u)}}_\partial < \tau^{Y^{(T,u)}}_{v} )\label{eq:break_X}\\
\P(\tau^{X^{(T,v)}}_\partial < \tau^{X^{(T,v)}}_{v,+} ) & =  \frac{N}{\deg (v)}+ \frac1{\deg(v)}\sum_{u \sim v, u \neq \partial} \P(\tau^{X^{(T,u)}}_\partial < \tau^{X^{(T,u)}}_{v} )\label{eq:break_Y}
\end{align}
Observe in the first equality, even though the edge $(v,u)$ is exposed, it does not affect the probability on the right hand side, since the exposed edge has an effect only after $v$ is hit. Thus it is enough to show that for each $u \sim v, u \neq \partial$,
\begin{equation}
\P(\tau^{Y^{(T,u)}}_\partial < \tau^{Y^{(T,u)}}_{v} ) \ge \P(\tau^{X^{(T,u)}}_\partial < \tau^{X^{(T,u)}}_{v} ),\label{eq:each_term}
\end{equation}
which is what we shall prove.

Take $u \not = \partial$ and $u \sim v$ and note that $u$ is not a leaf.
Let $S_u$ be the component of $u$ in $S$ if $v$ is removed from $S$, and let $T_u$ be the graph obtained by gluing all the leaves of $S_u$ into a single vertex (we still call this glued vertex $\partial$ admitting an abuse of notation). Let $d_u$ be the degree of $u$ in $S$. Observe the following recursion:
\begin{equation}
\P(\tau^{X^{(T,u)}}_\partial < \tau^{X^{(T,u)}}_{v} ) =  \frac{d_u-1}{d_u } \left( \P(\tau^{X^{(T_u,u)}}_\partial < \tau^{X^{(T_u,u)}}_{u,+})  + \P(\tau^{X^{(T_u,u)}}_\partial  \ge \tau^{X^{(T_u,u)}}_{u,+}) \P(\tau^{X^{(T,u)}}_\partial < \tau^{X^{(T,u)}}_{v} ) \right) \label{eq:recursion_RW}
\end{equation}
We broke the event that the random walk hits $\partial $ before $v$ into two: either it hits $\partial$ before returning to $u$ in $T_u$, or it returns to $u$ before hitting $\partial$ in $T_u$ and then hits $\partial$ before hitting $v$. The recursion above is immediate using the Markov property of random walk. The prefactor $(d_u-1)/d_u$ comes from the fact that we changed $T$ to $T_u$ in two of the terms above.

Now we apply a similar recursion for the loop contracting walk $Y$. Note that the event $\tau^{Y^{(T,u)}}_\partial < \tau^{Y^{(T,u)}}_{v} $ can be similarly broken up into two disjoint events: either $Y$ hits $\partial $ before returning to $u$ in $T_u$ or it returns to $u$ making an excursion $\cE$ in the tree $T_u$, and then hits $\partial$ before $v$ in the graph obtained by contracting $\cE$ (note again that neither $v$ nor $\partial$ is part of any contracted cycle in $\cE$, hence the events are unambiguous). Admitting an abuse of notation, let $\P(\cE)$ denote the probability of the excursion $\cE$ occurring. Noting that the excursion is simply a union of cycles, let $T_\cE$ be the tree obtained by contracting these cycles in $T_u$. Note that when $\cE$ is completed, a cycle is formed and let $u_\cE$ be the vertex obtained by contracting that cycle.  Now attach back the vertex $v$ to  $u_\cE$ using a single edge and call this graph $T'_\cE$.
\begin{equation}
\P(\tau^{Y^{(T,u)}}_\partial < \tau^{Y^{(T,u)}}_{v} )=  \frac{d_u-1}{d_u } \left( \P(\tau^{Y^{(T_u,u)}}_\partial < \tau^{Y^{(T_u,u)}}_{u,+})  + \sum_\cE \P(\cE) \P(\tau^{Y^{(T'_\cE,u_\cE)}}_\partial < \tau^{Y^{(T'_\cE,u_\cE)}}_{v} ) \right) \label{eq:recursion_Y}
\end{equation}
Now observe that $T_u$ has at most $k-1$ vertices. Thus using the induction hypothesis:
\begin{equation*}
\P(\tau^{Y^{(T_u,u)}}_\partial < \tau^{Y^{(T_u,u)}}_{u,+}) \ge \P(\tau^{X^{(T_u,u)}}_\partial < \tau^{X^{(T_u,u)}}_{u,+})
\end{equation*}

Let $T_u'$ be the graph obtained by attaching back $v$ to $T_u$.  Observe that $T'_\cE$ is the graph obtained from $T_u'$ after contracting the cycles in $\cE$. Therefore,
\begin{align*}
\P(\tau^{Y^{(T'_\cE,u_\cE)}}_\partial < \tau^{Y^{(T'_\cE,u_\cE)}}_{v}) &= \P(\tau^{Y^{(T'_\cE,v)}}_\partial < \tau^{Y^{(T'_\cE,v)}}_{v,+})\\
& \ge \P(\tau^{X^{(T'_\cE,v)}}_\partial < \tau^{X^{(T'_\cE,v)}}_{v,+})\\
& \ge \P(\tau^{X^{(T_u',v)}}_\partial < \tau^{X^{(T_u',v)}}_{v,+})\\
& = \P(\tau^{X^{(T,u)}}_\partial < \tau^{X^{(T,u)}}_{v} ).
\end{align*}
The first and last equalities are trivial since $v$ has degree 1 in $T_u'$. The second inequality follows from the induction hypothesis, and the last inequality follows from \Cref{lem:contraction_resistance}. Finally, note that the term
\begin{equation*}
\sum_\cE \P(\cE) = \P(\tau^{Y^{(T_u,u)}}_\partial  \ge  \tau^{Y^{(T_u,u)}}_{u,+}).
\end{equation*}
Thus, plugging these into \eqref{eq:recursion_Y}
\begin{align*}
\P(\tau^{Y^{(T,u)}}_\partial < \tau^{Y^{(T,u)}}_{v} ) & \ge \frac{d_u-1}{d_u }  \left( \P(\tau^{Y^{(T_u,u)}}_\partial < \tau^{Y^{(T_u,u)}}_{u,+})  + \sum_\cE \P(\cE)  \P(\tau^{X^{(T,u)}}_\partial < \tau^{X^{(T,u)}}_{v} ) \right) \\
& = \frac{d_u-1}{d_u }  \left( \P(\tau^{Y^{(T_u,u)}}_\partial < \tau^{Y^{(T_u,u)}}_{u,+})  + \P(\tau^{Y^{(T_u,u)}}_\partial  \ge  \tau^{Y^{(T_u,u)}}_{u,+})    \P(\tau^{X^{(T,u)}}_\partial < \tau^{X^{(T,u)}}_{v} ) \right) \\
& = \frac{d_u-1}{d_u }  \left( \P(\tau^{Y^{(T_u,u)}}_\partial < \tau^{Y^{(T_u,u)}}_{u,+}) (1-\P(\tau^{X^{(T,u)}}_\partial < \tau^{X^{(T,u)}}_{v} )  )  +\P(\tau^{X^{(T,u)}}_\partial < \tau^{X^{(T,u)}}_{v} )  \right)\\
& \ge  \frac{d_u-1}{d_u }  \left( \P(\tau^{X^{(T_u,u)}}_\partial < \tau^{X^{(T_u,u)}}_{u,+}) (1-\P(\tau^{X^{(T,u)}}_\partial < \tau^{X^{(T,u)}}_{v} )  )  +\P(\tau^{X^{(T,u)}}_\partial < \tau^{X^{(T,u)}}_{v} )  \right)\\
& = \frac{d_u-1}{d_u } \left( \P(\tau^{X^{(T_u,u)}}_\partial < \tau^{X^{(T_u,u)}}_{u,+})  + \P(\tau^{X^{(T_u,u)}}_\partial  \ge \tau^{X^{(T_u,u)}}_{u,+}) \P(\tau^{X^{(T,u)}}_\partial < \tau^{X^{(T,u)}}_{v} ) \right)\\
& = \P(\tau^{X^{(T,u)}}_\partial < \tau^{X^{(T,u)}}_{v} ) 
\end{align*}
thereby completing the proof. The last equality is simply \eqref{eq:recursion_RW}.
\end{proof}


We now state and prove a lemma regarding the escape probabilities of the simple random walk on trees. Essentially we claim that in a bounded subdivision of a tree with minimal degree at least 3, the escape probability from any vertex is lower bounded by a constant that is uniform over all such trees and all vertices in them.  Let $u \sim v$  be two vertices in an infinite tree $T$ where $\sim$ denotes $u $ and $v$ are adjacent, and suppose $T_{u,v}$ be the component of $u$ in  $T$ when a $v$ is removed from it. Let $X^{(G,x)}$ denote the simple random walk in a graph $G$ started from $x$ and let $\tau_{u,+}^{X^{(G,u)}}$ be as in the proof of \Cref{prop:hitting_prop}: it denotes the first time the random walk $X^{(G,u)}$  returns to $u$. We emphasize that this walk is in the unoriented, unweighted version of $G$. 
\begin{lemma}\label{lem:hitting}
 Fix $M>0$. We have\begin{equation}
\inf_{T, u,v} \P(\tau_{u,+}^{X^{(T_{u,v}, u)}} = \infty) =: \alpha(M) >0\label{eq:alpha}
\end{equation}
where the infimum is over all vertices $u,v$ in $T$ and over all trees $T$ which is an $M$-bounded subdivision of a tree with minimum degree at least 3.
\end{lemma}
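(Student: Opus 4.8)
The plan is to recast the escape probability in the language of electrical networks and then exploit the branching structure forced by the minimum-degree-$3$ hypothesis. Write $G := T_{u,v}$ and let $d := \deg_G(u)$. For the unit-conductance network on $G$, the standard identity relating escape probability to effective conductance (the infinite-volume limit of $\mathcal{C}(u \leftrightarrow z) = \deg(u)\,\P_u[\tau_z < \tau^+_{u}]$, see \cite{markovmixing}) gives
\[
\P(\tau_{u,+}^{X^{(G,u)}} = \infty) = \frac{C_{\mathrm{eff}}(u \leftrightarrow \infty)}{\deg_G(u)},
\]
where $C_{\mathrm{eff}}(u \leftrightarrow \infty)$ is the effective conductance from $u$ to infinity in $G$. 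The apparent danger is that $\deg_G(u)$ can be arbitrarily large, since the unsubdivided tree need not have bounded degree, so a naive bound on the numerator alone fails. The resolution is that a large degree also forces a large conductance: because $G$ is a tree, deleting $u$ splits it into its $d$ subtrees $G_1,\dots,G_d$, which pairwise meet only at $u$, so by the parallel law $C_{\mathrm{eff}}(u \leftrightarrow \infty) = \sum_{i=1}^{d} c_i$, where $c_i$ is the conductance from $u$ to infinity within $G_i$ alone. It therefore suffices to show $c_i \ge 1/(2M)$ for every $i$, since this yields $C_{\mathrm{eff}}(u\leftrightarrow\infty) \ge d/(2M)$ and hence the uniform bound $\alpha(M) \ge 1/(2M) > 0$.

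To bound each $c_i$ from below, I first note that every $G_i$ is infinite: a finite branch hanging off $u$ would be a finite tree in which every vertex except its attaching vertex has degree at least $3$, which is impossible because every finite tree has a leaf. Consequently, moving from $u$ into $G_i$ one reaches the first branch vertex $b_i$ (a vertex of the unsubdivided tree) along a path of length at most $M$, by $M$-boundedness of the subdivision. Placing the path $u \to b_i$ and the forward subtree rooted at $b_i$ in series, the resistance from $u$ to infinity within $G_i$ is at most $M + R_{\mathrm{eff}}(b_i \leftrightarrow \infty)$, where the first term is the series resistance of the length-$\le M$ path and the second is the resistance to infinity of the forward subtree at $b_i$. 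Thus it remains to prove $R_{\mathrm{eff}}(b \leftrightarrow \infty) \le M$ for every branch vertex $b$.

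For this last estimate I use that every branch vertex has at least $3$ incident base-edges, hence at least two forward directions, each reaching the next branch vertex within distance $M$. Let $R_k$ denote the largest resistance from a branch vertex to the wired set of branch vertices at forward-depth $k$, the supremum taken over all admissible trees. Then $R_0 = 0$, and putting the (at least two) length-$\le M$ forward paths in parallel gives the recursion $R_{k+1} \le (M + R_k)/2$, whose solution satisfies $R_k \le M(1 - 2^{-k}) < M$; letting $k \to \infty$ yields $R_{\mathrm{eff}}(b \leftrightarrow \infty) \le M$. Combining the two series terms gives $c_i = 1/(\text{resistance through } G_i) \ge 1/(2M)$, and summing over $i$ completes the proof with $\alpha(M) = 1/(2M)$. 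The only genuine obstacle is the unbounded-degree issue flagged in the first paragraph; the key insight is that the parallel decomposition makes $C_{\mathrm{eff}}$ grow with $\deg_G(u)$ at exactly the rate needed to cancel the denominator, after which the resistance bound is a routine series-parallel computation driven entirely by the minimum-degree hypothesis. (One can check the bound is essentially sharp: the $3$-regular tree with $M=1$ gives escape probability exactly $1/2$.)
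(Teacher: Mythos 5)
Your proof is correct, and its skeleton matches the paper's: both arguments are electrical-network computations that decompose the escape probability over the branches of $u$ (so that $\deg(u)$ in the denominator is exactly cancelled by the parallel sum of branch conductances) and then bound each branch's resistance to infinity by comparison with a binary-tree-type structure forced by the minimum-degree-$3$ hypothesis. The genuine difference is how the subdivision parameter $M$ is handled. The paper first asserts --- without detail --- that passing to an $M$-bounded subdivision changes the escape probability by at most a uniformly bounded factor depending only on $M$, thereby reducing to the unsubdivided case, and then invokes Rayleigh monotonicity against the binary tree $T_2$ to get $\alpha(1)\ge(1+\alpha)^{-1}=1/2$. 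You instead work in the subdivided tree directly: the path of length $\le M$ from $u$ to the first branch vertex is absorbed by the series law, and the forward resistance from a branch vertex is bounded by $M$ via the recursion $R_{k+1}\le (M+R_k)/2$, giving the explicit constant $\alpha(M)\ge 1/(2M)$. Your route is more self-contained, since the paper's reduction step is the one non-explicit part of its proof, and it yields a clean quantitative bound; the paper's version is shorter once that reduction is granted. One small slip worth fixing: to see that each branch $G_i$ is infinite you say every non-attaching vertex has degree at least $3$, but subdivision vertices have degree $2$ in $T$; the argument survives because degree $\ge 2$ already rules out leaves, which is all that the ``every finite tree has a leaf'' contradiction requires.
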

\begin{proof}
    This is a standard exercise in electrical network theory, we provide a quick proof here assuming familiarity of the reader with this theory. First of all, note that replacing $T$ by a bounded subdivision can change the left hand side of \eqref{eq:alpha} by a uniformly bounded factor depending only on $M$, so we can equivalently assume that the minimum degree of $T$ is at least 3 (i.e. there is no subdivision, or $M=1$). Let $R^{G}_{\text{eff}}(x \leftrightarrow y)$ denote the effective resistance between $x$ and $y$ on the graph $G$ with with conductance 1 on every edge. 
Observe that by monotonicity of effective resistance (Thomson's principle, see \cite[Theorem 9.10]{markovmixing}) 
\begin{equation}
R^{T_{u,v}}_{\text{eff}}(u \leftrightarrow \infty) \le R^{T_2}_{\text{eff}}(x \leftrightarrow \infty) =:\alpha  \label{eq:alpha_binary}\end{equation}
where $T_2$ is the binary tree with root $x$. Let $d_u$ be the degree of $u$ in  $T_{u,v}$.  Observe that by the \cite[Proposition 9.5]{markovmixing} and the parallel law:
\begin{align}
    \P(\tau_{u,+}^{X^{(T_{u,v}, u)}} = \infty) & = \frac1{d_u} \sum_{x \sim u \text{ in $T_{u,v}$}} \frac 1{1+R^{T_{x,u}}_{\text{eff}} (x \leftrightarrow \infty)} \nonumber\\
    & \ge \frac1{d_u} \sum_{x \sim u \text{ in $T_{u,v}$}} \frac 1{1+\alpha}\label{eq:upper_alpha}\\
    & = \frac1{1+\alpha}.
\end{align}
where in \Cref{eq:upper_alpha} we used \eqref{eq:alpha_binary}. Since \eqref{eq:upper_alpha} does not depend on $u,v,T$, we have \eqref{eq:alpha} with $\alpha(1) \ge (1+\alpha)^{-1}$. This completes the proof.
\end{proof}
\begin{proof}[Proof of \Cref{thm:LCRW_transient}]
Let us iteratively remove the leaves of $T'$ to recover a subtree where every vertex has degree at least 2, call this $T_{\text{core}}$. Observe that the lemma is true for $T'$ if and only if it is true for $T_{\text{core}}$. Indeed, every connected component of $T' \setminus T_{\text{core}}$ is a finite tree and there is only one choice for the MSA on the edges of these trees once the choice of the graph in the exhaustion of $T'$ gets large enough (see \Cref{wlgp_0}). So let us assume that $T' = T_{\text{core}}$. Let us also assume that $T'$ is an $M$-subdivision of a tree with degree at least 3.

By \Cref{lem:transience_equiv} it is enough to prove that $S_i \cap \vec E_i = \emptyset$ infinitely often has probability 0 for CLEB walk started at any vertex.
Fix a vertex $v$, take an $n$ large and consider $T_n$, the tree with all vertices at most distance $n$ from $v$.  Observe that using \Cref{prop:hitting_prop} and \Cref{eq:alpha}, the probability that the LCRW from $v$ in $T_n^{\w}$ hits $\partial$ before returning to $v$ is at least $\alpha(M)$. If the LCRW returns to $v$ before hitting $\partial$, then the tree changes to some other  $T'_n$ and $v$ is replaced by some $v'$. However, observe that $T'_n$ is still an $M$-subdivision of a tree with degree at least 3. Hence by \eqref{eq:alpha}, the simple random walk still has probability at least $\alpha(M)$ to hit $\partial$ before returning to $v' $. Thus using \Cref{prop:hitting_prop} again, we see that the LCRW also has probability at least $\alpha(M)$ to hit $\partial $ before returning to $v'$. Thus the probability that LCRW returns $k$ times is at most $(1-\alpha(M))^k$ for any $n$.  Taking the limit $n \to \infty$ and then taking $k \to \infty$, we conclude.
\end{proof}

\begin{proof}[Proof of \Cref{thm:convergence}]
Follows immediately from \Cref{prop:necessary_cond,lem:CLEB_LCRW,thm:LCRW_transient}.
\end{proof}
We finish this section with the proof of \Cref{thm:transient_GW}. The idea of the proof is similar to that of \Cref{thm:convergence}, but instead of \Cref{lem:hitting}, we use the randomness of the Galton-Watson tree to our advantage.
\begin{proof}[Proof of \Cref{thm:transient_GW}]

It is well-known that the simple random walk is transient almost surely on both $T_{\text{GW}}$ and $T_{\text{UGW}}$ (see \cite{GK_GW,LPP_GW}). Thus the probability that the simple random walk from the root never returns to it is some constant $\alpha$ which depends only on the law of $Z$ (when averaged over the randomness of the tree).

Let us concentrate on $T_{\text{GW}}$, the proof for $T_{\text{UGW}}$ is similar. Recall the notations $P_i,X_i$ from \Cref{sec:LCRW}. However, instead of $G_i$, we will reveal a portion of 
the tree $T_i$ 
simultaneously performing the LCRW  as follows.
\begin{itemize}
    \item In the first step, reveal all the neighbors of $X_0$ in $T_{\text{GW}}$, the number of such neighbors has the same law as $Z$.
    \item Inductively, assume that all the neighbors of vertices in $P_i$ in $G_i$ are in $T_i$ and (hence $X_i$ is not a leaf of $T_i$). Perform a loop contracting random walk step, which is just a simple random walk step in $T_i$. If we reach a leaf, reveal all its children (according to the law $Z$) and call this new tree $T_{i+1}$.
\end{itemize}
Borrowing notations from \Cref{sec:CLEB_walk}, let $(\tau_k)_{k \ge 1}$ be the successive times when $P_i$ becomes a singleton. For each $k \ge 0$, conditionally on the $\sigma$-algebra generated by $T_{\tau_k}$ (call it $\cF_{k}$), the law of the trees attached to each neighbor of the singleton $P_{\tau_k}$ are distributed as i.i.d.\ $T_{\text{GW}}$. Thus conditionally on $\cF_k$, using \Cref{prop:hitting_prop}, the probability that the LCRW never comes back to hit $P_{\tau_k} $ is at least $\alpha$. Since this probability does not depend on $k$, the probability that the LCRW returns $n$ times is at most $(1-\alpha)^n$. Letting $n \to \infty$, the proof is complete.
\end{proof}


\section{Ends of the MSA}\label{sec:end}
In this section, we prove \Cref{thm:end,thm:end_GW}.
\subsection{Perturbating the MSA}
We first prove a stability under perturbation result for the MSA. Namely, we show that locally changing the weight in a certain way, only changes the MSA for the changed weights locally. This will allow us a way to deform the MSA to obtain a new arborescence that in turn will be an MSA for a modification of the weights. We prove two such perturbation lemmas, the second one depends on the knowledge of the CLEB walk algorithm, and the first one does not. In what follows, we only need the second one, but we still keep the statement and proof of the first perturbation lemma for potential future use. (The reader may wish to skip \Cref{lem:perturbation} as it is not used later).

The reader may refer to \Cref{fig:pert1} while reading the statement of the following lemma. 
Recall that we denote by $\cO(v)$ the set of outgoing edges from $v$. 
%
%
%
%
\begin{figure}[ht]
\centering
\includegraphics[scale = 0.7]{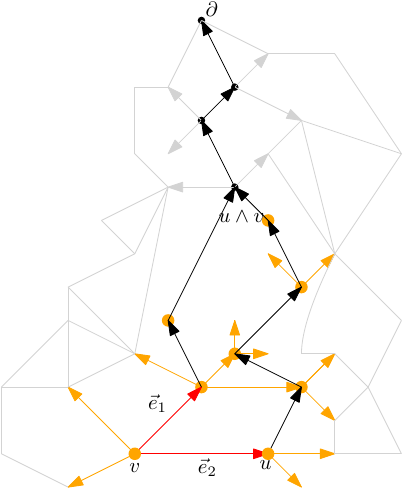}
\caption{Orange vertices denote $\Sigma$. In \Cref{lem:perturbation}, the weights of the orange outgoing edges and that of $\vec e_1$ are increased to at least $2M_\Sigma$. }\label{fig:pert1}
\end{figure}

\begin{lemma}[Perturbating the MSA: I]\label{lem:perturbation}
Suppose $G$ is a finite, connected graph, possibly with multiple edges but no self-loops and a boundary vertex $\partial$.
 Suppose $T^*(w)$ is the  MSA of $G$ with  weight function $w$ satisfying \eqref{eq:linear_comb}. Fix a vertex $v$ and suppose $\vec e_1,\ldots ,\vec e_d$ is the set of outgoing edges from $v$, let $u = (\vec e_2)_+$ and suppose $\vec e_1 \in T^*(w)$. Suppose $v \not \in \mathfrak F_{T^*}(u)$ and let $\Sigma = \Sigma(u,v)$ be the set of endpoints of the futures of $u$ and $v$ until they merge, i.e., the endpoints of 
 $$
 (\mathfrak F_{T^*}(u)  \cup \mathfrak F_{T^*}(v) ) \setminus (\mathfrak F_{T^*}(u)  \cap \mathfrak F_{T^*}(v)). $$
 except $u \wedge v$.
Let $\vec E_\Sigma = \{\cO(v): v \in  \Sigma\}$ and define $M_\Sigma := \max\{w(\vec e): \vec e \in \vec E_\Sigma \}$. Choose a new collection of weights $w'$ such that it satisfies \eqref{eq:linear_comb} and 
  \begin{equation}
 w'(\vec e ) 
 \begin{cases}
 \ge 2M_\Sigma \text{ if $\vec e \in \vec E_\Sigma \setminus (T^* \cup \{\vec e_2\})$ or $\vec e = \vec e_1$}\\\label{eq:w'}
 = w(\vec e) \text{ otherwise.}
 \end{cases}
 \end{equation}
Then $T':=(T^*(w) \setminus \{e_1\}) \cup \{e_2\} $ is the MSA for the weights $w'$. 
\end{lemma}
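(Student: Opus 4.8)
The plan is to identify $T'$ as the $w'$-MSA by showing directly that it has the smallest $w'$-weight among \emph{all} spanning arborescences of $(G,\partial)$. Since $w'$ again satisfies \eqref{eq:linear_comb}, the $w'$-MSA is unique, so it suffices to prove $w'(T')\le w'(T)$ for every spanning arborescence $T$; genericity of $w'$ then forces $T'$ to be the unique minimizer, hence the MSA. First I would record two bookkeeping facts. Because $v\in\Sigma$ and every vertex of $\Sigma$ contributes its outgoing edges to $\vec E_\Sigma$, both $\vec e_1,\vec e_2\in\vec E_\Sigma$, so $w(\vec e_1),w(\vec e_2)\le M_\Sigma$. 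Moreover $T'$ uses none of the raised edges: its edges are those of $T^*$ other than $\vec e_1$, together with $\vec e_2$, and by inspection none of these lies in $\vec E_\Sigma\setminus(T^*\cup\{\vec e_2\})$ nor equals $\vec e_1$. Consequently $w'(T')=w(T')=w(T^*)-w(\vec e_1)+w(\vec e_2)\le w(T^*)+M_\Sigma$.

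I would then split into two cases according to whether $T$ uses a raised edge. If $T$ uses at least one raised edge $\vec f$, then either $\vec f\in\vec E_\Sigma$ or $\vec f=\vec e_1$, and in both situations $w'(\vec f)-w(\vec f)\ge 2M_\Sigma-M_\Sigma=M_\Sigma$. Since all weight increments $w'(\vec e)-w(\vec e)$ are nonnegative and $T^*$ is the $w$-MSA, this yields $w'(T)\ge w(T)+M_\Sigma\ge w(T^*)+M_\Sigma\ge w'(T')$, as desired. This is the easy case, and it is precisely what the threshold $2M_\Sigma$ is engineered to deliver.

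The case where $T$ avoids every raised edge is the crux. Avoiding the raised edges forces the outgoing edge of each $\Sigma$-vertex: the only non-raised outgoing edge of $v$ is $\vec e_2$, and for each $x\in\Sigma\setminus\{v\}$ the only survivor is its $T^*$-edge. In particular $\vec e_2\in T$, and $T$ agrees with $T^*$ on all of $\Sigma\setminus\{v\}$. I would then set $\check T:=(T\setminus\{\vec e_2\})\cup\{\vec e_1\}$, re-pointing $v$ from $u$ back to $(\vec e_1)_+$. The key point is that $\check T$ is still a spanning arborescence: a cycle could arise only if $v$ lay in the $T$-future of $(\vec e_1)_+$, but that future first runs along the forced $\Sigma$-edges into $u\wedge v$ and then coincides with $\mathfrak F_T(u\wedge v)$, while $\mathfrak F_T(v)$ already reaches $u\wedge v$ through $\vec e_2$ and the other arm; thus $v\in\mathfrak F_T((\vec e_1)_+)$ would produce a genuine cycle $u\wedge v\rightsquigarrow v\rightsquigarrow u\wedge v$ inside the acyclic $T$, a contradiction. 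Since $\check T$ now agrees with $T^*$ on all of $\Sigma$ and $T^*$ minimizes $w$, optimality gives $w(\check T)\ge w(T^*)$, whence $w(T)=w(\check T)-w(\vec e_1)+w(\vec e_2)\ge w(T^*)-w(\vec e_1)+w(\vec e_2)=w'(T')$; as $T$ uses no raised edge, $w'(T)=w(T)$, completing this case.

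I expect the main obstacle to be the acyclicity verification for $\check T$ — ruling out that re-pointing $v$ along $\vec e_1$ closes a loop — which is exactly where the hypothesis $v\notin\mathfrak F_{T^*}(u)$ and the precise definition of $\Sigma$ (the two arms up to, but excluding, $u\wedge v$) enter. Everything else is elementary bookkeeping comparing $w$ with $w'$ together with a single appeal to the optimality of $T^*$ for $w$; in particular no property of the CLEB walk is invoked, consistent with the remark preceding the lemma.
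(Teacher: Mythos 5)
Your proof is correct, and it follows the same overall strategy as the paper's — a direct weight comparison against an arbitrary competing spanning arborescence, using the $2M_\Sigma$ threshold to penalize raised edges and an exchange argument at $v$ that reduces the remaining comparison to the $w$-optimality of $T^*$ — but your case decomposition is genuinely different and cleaner. The paper splits into three cases according to the outgoing edge $\vec e_i$ of $v$ in the competitor $U$ (the case $\vec e_i \neq \vec e_2$, then, when $\vec e_i = \vec e_2$, the subcases where the futures of $u$ and $(\vec e_1)_+$ in $U$ do or do not agree with those in $T'$ up to the merge point), and it routes the first and third cases through an auxiliary estimate, $w(T^*\cap \vec E') \le w(U \cap \vec E') + M_\Sigma$ where $\vec E'$ is the set of edges not outgoing from $v$. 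Your binary split — does the competitor use a raised edge or not — absorbs the paper's first and third cases in a single stroke (any raised edge costs at least $M_\Sigma$ extra, which dominates the at-most-$M_\Sigma$ saving from rerouting $v$), and your hard case corresponds to the paper's Case 1, with the added feature that avoiding all raised edges \emph{forces} the competitor to agree with $T'$ on every vertex of $\Sigma$, so the swap $\check T = (T\setminus\{\vec e_2\})\cup\{\vec e_1\}$ compares against $T^*$ via a single appeal to optimality. You also explicitly verify that $\check T$ is acyclic (via the two-arm argument showing $v\in \mathfrak F_T((\vec e_1)_+)$ would close a cycle through $u\wedge v$ inside the acyclic $T$); the paper's Case 1 performs the analogous swap $U\setminus\{\vec e_2\}\cup\{\vec e_1\}$ but leaves this acyclicity check implicit, so your write-up is actually more complete on this point. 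In short: same two ingredients, a more economical partition of cases, and a gap in the paper's exposition filled.
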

\begin{proof}
First of all, note that the condition $v \not \in \mathfrak F_{T^*}(u)$ makes $T'$ a spanning arborescence of $(G, \partial)$. Let $\vec E'$ be the set of all oriented edges in $\vec E$ except those outgoing from $v$. Observe that
\begin{align}
    w(\vec e_2) & \ge w(\vec e_1), \label{eq:e2e1}\\
    w(T') & = w'(T'), \label{eq:ww'T'}\\
T' \cap \vec E' = T^* \cap \vec E' \text{ and hence } w(T' \cap \vec E') &= w(T^* \cap \vec E') = w'(T' \cap \vec E') = w'(T^* \cap \vec E').\label{eq:ww'T'T*}
\end{align}
Indeed, \eqref{eq:e2e1} follows since $T^*$ is the MSA for $w$ and both \eqref{eq:ww'T'} and \eqref{eq:ww'T'T*} follow from the definitions of $T'$ and $w'$.

Pick any spanning arborescence $U$  of $(G, \partial)$ and suppose the edge outgoing from $v$ in $U$ is $\vec e_i$.  Since $T^*(w)$ is the MSA for $w$, we must have
\begin{equation*}
w(\vec e_1) + w(T^*(w ) \cap \vec  E') \le w(\vec e_i) + w(U \cap \vec E').
\end{equation*}
Therefore,
\begin{equation}
w(T^*(w ) \cap \vec E') \le w(U \cap \vec E') + M_\Sigma \le w'(U \cap \vec E')+M_\Sigma \label{eq:estimate1}
\end{equation}
since by definition, $w'(\vec e) \ge w(\vec e)$  for all $\vec e$.
Now assume $\vec e_i \neq \vec e_2$. This is actually the easy case as $w'(\vec e_i) \ge 2M_\Sigma$ by definition. Indeed,
\begin{align*}
w'(U) = w'(\vec e_i) + w'(U \cap \vec E') &\ge w(\vec e_2) +M_\Sigma +  w(U \cap \vec E') \\ & \ge w(T^*(w ) \cap \vec E' )  + w(\vec e_2)- M_\Sigma+M_\Sigma = w(T') = w'(T'),
\end{align*}
and thus $T'$ is the MSA for the weights $w'$.
Here the first inequality follows from the definition of $w'$, the second inequality follows from \eqref{eq:estimate1} and the last two equalities follow from \eqref{eq:ww'T'} and \eqref{eq:ww'T'T*}.
From now on, we assume $\vec e_i = \vec e_2$. We divide it into two cases.
\paragraph{Case 1:}
Suppose $\mathfrak F_U(u)$ and $\mathfrak F_U((\vec e_1)_+)$ is the same as $\mathfrak F_{T'}(u)$ and $\mathfrak F_{T'}((\vec e_1)_+)$ until they merge. (Here we do not gain anything from the orange edges in \Cref{fig:pert1}.) Observe the following:
\begin{equation}
  w'(U \cap \vec E') \ge w(U \cap \vec E' )  \ge w(T' \cap \vec E') = w(T^* \cap \vec E')\label{eq:relation}
\end{equation}
The last equality is true by \eqref{eq:ww'T'T*}, and if the inequality is false, then 
 \begin{equation*}
w(U \setminus \{\vec e_2 \} \cup \{\vec e_1\})  = w(\vec e_1) + w(U \cap \vec E')  < w(\vec e_1) + w(T^* \cap \vec E') = w(T^*),
\end{equation*}
a contradiction since $T^*$ is the MSA for weight $w$.  Thus in this case, \begin{equation*}
w'(U) = w'(\vec e_2) + w'(U \cap \vec E') \ge w(\vec e_2) + w(U \cap \vec E')  \ge  w(\vec e_2) + w(T' \cap \vec E') = w(T') = w'(T').
\end{equation*}
where the last equality again follows from \eqref{eq:ww'T'}. Thus in this case $T'$ is the MSA for $w'$ as well.

\paragraph{Case 2:}
Now suppose that the future of $u$ and that of $ (\vec e_1)_+$ of $U$ is not the same as that of $T'$ until they merge, i.e., there exists a vertex $y \in \Sigma$ such that the outgoing edge from $y$ in $U$ is not in $T'$. Call this edge $\vec e_y$. Note that $w'(\vec e_y) - w(\vec e_y) \ge M_S$ by definition.

\begin{align}
w'(T') & = w'(\vec e_2 )  + w'(T' \cap \vec E') \\ \nonumber
& \le w'(\vec e_2) + w(U \cap \vec E') + M_S \\
& \le w'(\vec e_2) + w'(U \cap \vec E') -M_S+M_S \\
& \le w'(U ).
\end{align}
The first inequality follows from \eqref{eq:ww'T'T*} and \eqref{eq:estimate1}. The second inequality follows since $w'(\vec e) \ge w(\vec e) $ for all $\vec e$ and $w'(\vec e_y) \ge w(\vec e_y) + M_S$. This shows that $T' $ is the MSA for the weight $w'$ in this case as well, and the proof is complete.
\end{proof}

Although \Cref{lem:perturbation} is a natural perturbation procedure, it is as local as $\Sigma$ is. We now prove a different version of \Cref{lem:perturbation} which will be useful when $\Sigma $ is not local, particularly in the case when $u \wedge v = \partial $ and later in the infinite volume case when the CLEB walk is a.s.\ transient. Unlike \Cref{lem:perturbation}, we shall make heavy use of the results in \Cref{sec:CLEB_walk} in the statement and the proof of the next lemma. To that end,  recall the notation $S_{\tau_{N_1}}$ from \Cref{sec:CLEB_walk}: roughly it is the set of edges exposed by the CLEB walk until a time after which $P_i$ is never a singleton.

\begin{lemma}[Perturbing the MSA: II]\label{lem:perturbation2}
Suppose we are in the setup of \Cref{lem:perturbation} and assume $v \not \in \mathfrak F_{T^*}(u)$.  Let $\Sigma = \Sigma (v)$ be the set of endpoints of  $S_{\tau_{N_1}+1}$ except $(f_1)_+$ where $S_{\tau_{N_1}}, f_1$ are defined as in \Cref{sec:CLEB_walk} for the CLEB walk starting at $v$. Now define $\vec E_\Sigma,M_\Sigma, w', T'$ as in \Cref{lem:perturbation}. Then $T'$ is the MSA for the weights $w'$.
\end{lemma}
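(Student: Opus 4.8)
The plan is to prove $T'$ is the MSA for $w'$ by verifying $w'(T')\le w'(U)$ for every spanning arborescence $U$ of $(G,\partial)$; since $w'$ satisfies \eqref{eq:linear_comb} this identifies $T'$ as the unique MSA. For a spanning arborescence $A$ and a non-boundary vertex $z$ write $A(z)$ for the outgoing edge of $z$ in $A$, and set $I:=\{\vec e_1\}\cup(\vec E_\Sigma\setminus(T^*\cup\{\vec e_2\}))$ for the edges whose weight is raised. By \eqref{eq:w'} we have $w'\ge w$ everywhere, $w'=w$ off $I$, and every $\vec e\in I$ satisfies $w'(\vec e)-w(\vec e)\ge 2M_\Sigma-M_\Sigma=M_\Sigma$ (as $I\subseteq\vec E_\Sigma$ gives $w(\vec e)\le M_\Sigma$). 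In particular $w'(T')=w(T')=w(T^*)-w(\vec e_1)+w(\vec e_2)$, and since $v\in\Sigma$ forces $\vec e_2\in\vec E_\Sigma$ we get $0<w(\vec e_2)-w(\vec e_1)\le w(\vec e_2)\le M_\Sigma$, whence $w(T')\le w(T^*)+M_\Sigma$.

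First I would dispose of the easy configurations exactly as in \Cref{lem:perturbation}. If $U(v)\neq\vec e_2$, then $U(v)\in\cO(v)\setminus\{\vec e_2\}\subseteq I$ (it is either $\vec e_1$ or an outgoing edge of $v$ not in $T^*$); and if $U(v)=\vec e_2$ but $U\cap I\neq\emptyset$, then $U$ uses some edge of $I$. In both cases $U$ carries an edge whose $w'$-increment is at least $M_\Sigma$, so $w'(U)\ge w(U)+M_\Sigma\ge w(T^*)+M_\Sigma\ge w(T')=w'(T')$, using the optimality $w(U)\ge w(T^*)$ for $w$.

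The only remaining case is $U(v)=\vec e_2$ with $U\cap I=\emptyset$. Then for each $z\in\Sigma$ we have $U(z)\in\cO(z)\subseteq\vec E_\Sigma$ and $U(z)\notin I$, so $U(z)\in T^*\cup\{\vec e_2\}$; for $z\neq v$ this forces $U(z)=T^*(z)$, so $U$ agrees with $T^*$ on $\Sigma\setminus\{v\}$. Now $w'(U)=w(U)$, so it suffices to prove the \emph{sharpened} bound $w(U)\ge w(T')$. Comparing the two arborescences edge by edge, $w(U)-w(T^*)=\sum_{z}(w(U(z))-w(T^*(z)))$; the $z\in\Sigma$ terms contribute exactly $w(\vec e_2)-w(\vec e_1)$ (all vanish except $z=v$), so $w(U)\ge w(T')$ is equivalent to the external inequality $\sum_{z\notin\Sigma}(w(U(z))-w(T^*(z)))\ge 0$, i.e.\ $U$ may not save weight over $T^*$ outside $\Sigma$.

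This is exactly where the CLEB description of $\Sigma$ is needed, and is the crux of the argument. By \Cref{lem:recovery} and the construction of \Cref{sec:CLEB_walk}, the uncontraction $E_{N_1}$ of the loops $\cL_1$ is a sub-arborescence of $T^*$ spanning $\Sigma\cup\{(\vec f_1)_+\}$, rooted at $(\vec f_1)_+$, with $\vec e_1\in E_{N_1}$ and $E_{N_1}(z)=T^*(z)$ for all $z\in\Sigma$; moreover, because $\tau_{N_1}$ is the \emph{last} emptying (so the blob $\Sigma$ is never re-absorbed and $\vec f_1$ is never contracted), the $T^*$-future of $(\vec f_1)_+$ never re-enters $\Sigma$, i.e.\ $\mathfrak F_{T^*}((\vec f_1)_+)\cap\Sigma=\emptyset$. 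The plan is to build from $U$ a competitor of $T^*$: let $U^\circ$ agree with $T^*$ on $\Sigma\cup\{(\vec f_1)_+\}$ and with $U$ elsewhere. Any excursion of $U^\circ$ inside $\Sigma$ ascends through $E_{N_1}$ and can leave only at $(\vec f_1)_+$, so $U^\circ$ is a spanning arborescence precisely when $\mathfrak F_U((\vec f_1)_+)$ avoids $\Sigma$, and in that case $w(U^\circ)\ge w(T^*)$ yields the external inequality immediately. The main obstacle is the residual case where $\mathfrak F_U((\vec f_1)_+)$ \emph{does} return to $\Sigma$: since $U$ follows $E_{N_1}$ on $\Sigma\setminus\{v\}$, a return at any $z\neq v$ would create a cycle in $U$, so the return can only be at $v$ (after which $U$ exits again along $\vec e_2$). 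I would close this case by rerouting $(\vec f_1)_+$ back onto its $T^*$-future — equivalently, contracting $\Sigma$ to a single vertex $\sigma$ and using that the CLEB walk certifies $\vec f_1$ as the minimum-weight exit of $\sigma$, together with the contraction invariance of the CLEB algorithm from \Cref{sec:CLEB_finite} and the optimality of $E_{N_1}$ on the induced subgraph on $\Sigma\cup\{(\vec f_1)_+\}$ — so that replacing the offending re-entering path by the $T^*$-exit cannot lower the weight. Making this last comparison quantitative, by controlling the weight along $\mathfrak F_{T^*}((\vec f_1)_+)$ during the rerouting, is the step I expect to demand the most care.
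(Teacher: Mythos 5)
Your overall route is genuinely different from the paper's: you attempt a direct exchange/weight-comparison in the spirit of \Cref{lem:perturbation}, whereas the paper never compares weights of competitors at all — it reruns the \emph{sequential CLEB algorithm} for $w'$ in a carefully chosen order (first all vertices outside $V(S_{\tau_{N_1}})$, i.e.\ with boundary $\partial'=V(S_{\tau_{N_1}})\cup\{\partial\}$, where $w=w'$ so the partial output $Q$ is unchanged; then the blob vertices with $v$ last), shows via \Cref{claim:not_in_past} (proved from the uncontraction structure of \Cref{lem:recovery}) that no cycle is ever created in this last phase, and concludes that the algorithm outputs exactly $T'$, which is therefore the MSA for $w'$ by the already-proved correctness of the sequential CLEB algorithm. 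Within your route, the easy cases, the reduction to the external inequality $\sum_{z\notin\Sigma}\bigl(w(U(z))-w(T^*(z))\bigr)\ge 0$, and the sub-case where $\mathfrak F_U((\vec f_1)_+)$ avoids $\Sigma$ are all correct (one small fix: swap to $T^*$ on $\Sigma$ only, not on $\Sigma\cup\{(\vec f_1)_+\}$; otherwise the inequality you obtain omits the term $w(U((\vec f_1)_+))-w(T^*((\vec f_1)_+))$, which can be negative).

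The residual sub-case, however, contains a genuine gap, and it is exactly the content of the lemma. First, your key premise --- ``the CLEB walk certifies $\vec f_1$ as the minimum-weight exit of $\sigma$'' --- is false for the original weights $w$: $\vec f_1$ minimizes only the \emph{modified} weights $U_{\tau_{N_1}}$, which differ from $w$ by vertex-dependent amounts subtracted during the successive contractions, so no $w$-weight comparison can be extracted from it without tracking those subtractions (which is tantamount to redoing the CLEB correctness proof). Second, the rerouting itself need not produce a competitor: if you set the outgoing edge of $(\vec f_1)_+$ to $T^*((\vec f_1)_+)$, the resulting edge set can acquire \emph{new} cycles, because $\mathfrak F_{T^*}((\vec f_1)_+)$ immediately leaves the region where you control $U$, and the $U$-future of the very first vertex on it may return to $(\vec f_1)_+$ or to $\Sigma$; breaking that cycle instead at $v$ just reproduces $U$, so the iteration goes in circles. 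Note that your residual configuration --- the $U$-future of $(\vec f_1)_+$ re-entering $\Sigma$ through a vertex whose $E_{N_1}$-path passes through $v$ and exits along $\vec e_2$ --- is precisely the cycle-formation scenario that \Cref{claim:not_in_past} is designed to exclude for the algorithm's output; so closing your argument would in effect require proving that claim (or an equivalent statement about pasts and futures across the blob boundary), at which point one may as well follow the paper's algorithmic proof.
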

\begin{proof}
Let $\partial':= V(S_{\tau_{N_1}}) \cup \{\partial\}$.
Perform CLEB walk algorithm with boundary $\partial'$, in any order. When the algorithm is complete, say at step $\vartheta=\vartheta(\partial')$ we are left with a graph $G'$ (formed by contracting cycles) and a spanning arborescence $Q$ of $G'$ with boundary $\partial'$. Note that the CLEB walk on $(G, \partial')$ for the weights $w $ and $w'$ are the same up to step $\vartheta $ since none of the outgoing edges of $S_{\tau_{N_1}}$ has been exposed up to this step. The latter observation also implies that $\vec e_1, \vec e_2, \vec f_1$ are all in $G'$ (i.e., are not contracted yet). Also, recall that the root of every component of $Q$ is in $\partial'$ and every vertex $v$ in $\partial'$ corresponds to a unique component in $Q$ whose root is $v$. Finally, by definition of $S_{\tau_{N_1}}$, the component of $\vec f_1^+$ in $G'$ has root $\partial$. Let $V_{\text{int}}(Q)$ be the set of non-boundary vertices of $Q$. The idea now is to complete the CLEB algorithm on the remaining vertices with weight $w'$ and compare it with that of weight $w$. 

Define $\text{Past}_{T^*}(v)$ be the set of vertices $u$ such that $ v \in \mathfrak F_{T^*}(u)$. We now claim the following.
\begin{claim}\label{claim:not_in_past}
If  $(\vec e_2)_+ \in V_{\text{int}}(Q)$, then $\mathfrak F_{Q}((\vec e_2)_+)$ cannot contain any element from  $\text{Past}_{T^*}(v) \cap V(S_{\tau_{N_1}}) $.
\end{claim}
\begin{proof}
We shall prove this claim by contradiction, so let us assume this is the case. Clearly, $\vec e_2$ is not in $S_{\tau_{N_1}}$. Let us complete the sequential CLEB algorithm with weight $w$ by starting the CLEB walk from $v$ with weight $w$ until a vertex outside $V(S_{\tau_{N_1}})$ is hit. By the definition of $S_{\tau_{N_1}}$, the edges exposed by this process is exactly $S_{\tau_{N_1}}$ and the process stops by hitting $(\vec f_1)_+$ (i.e., all the vertices of $V(S_{\tau_{N_1}})$ are contracted into a single vertex). This yields a contracted graph $\tilde G$ with a spanning arborescence $\tilde Q$ of $(\tilde G, \partial)$. Furthermore, $\tilde Q$ can be obtained from $Q$ by identifying all the vertices of $V(S_{\tau_{N_1}})$ into a single vertex, removing all the self-loops, and adding the edge $\vec f_1$. Let $\cL$ be the ordered sequence of loops contracted by the CLEB walk started from $v$ until $(\vec f_1)_+$ is hit. Now we uncontract the loops in $\cL$ in reverse order, starting from $((\tilde G, \partial), \tilde Q)$. By \Cref{lem:recovery}, this yields a spanning arborescence of $(G', \partial)$ which can be seen as the union of $T^* \cap S_{\tau_{N_1}} , Q$ and $\vec f_1$. Now further uncontracting the remaining loops in reverse order of course yields $T^*$ by definition. But furthermore by \Cref{lem:recovery}, this uncontraction procedure can be carried out separately for every component of $Q$ and each such procedure yields an arborescence with the same root as that component.
 Thus if $(\vec e_2)_+$ in $G'$ is a vertex in $V_{\text{int}}(Q)$ whose future contains  $\text{Past}_{T^*}(v) \cap S_{\tau_{N_1}}$, then $(\vec e_2)_+$ in $G$, which is $u$, satisfies $\mathfrak F_{T^*}(u) \ni v$. This is a contradiction by our assumption on the future of $u$ in $T^*$ and the claim is proved.
\end{proof}
Now suppose we start from $Q$, order the vertices in $V(S_{\tau_{N_1}})$ in some arbitrary order, except making sure that $v$ is the last vertex, and finish the sequential CLEB algorithm for the weight $w'$. Observe that by the choice of $w'$, for every vertex $y \neq v$ in $V(S_{\tau_{N_1}})$, we simply expose the outgoing edge from $y$ in $T^*$, which is the same as that of $T'$. Thus when every vertex except the last vertex $v$ has been explored, we expose $T^* \cap S_{\tau_{N_1}+1} \setminus \{\vec e_1\}$. Now when we choose $v$, the minimum $w'$-weight outgoing edge is $\vec e_2$. 

If $(\vec e_2)_+ \in S_{\tau_{N_1}}$, then by \Cref{lem:recovery} and our assumption, $v \not \in \mathfrak F_{T^*}(u) $, thus adding $\vec e_2$ does not create a cycle. On the other hand, if $(\vec e_2)_+ \not \in S_{\tau_{N_1}}$, by \Cref{claim:not_in_past}, adding $\vec e_2$ again does not create a cycle.  Thus this completes the contraction part of the sequential CLEB algorithm, and overall, no cycle is formed while exploring $V(S_{\tau_{N_1}})$.  In the uncontraction phase of the algorithm,  since none of the cycles contracted has any outgoing edge from $S_{\tau_{N_1}}$, and $Q$ is the same for both $w$ and $w'$, uncontracting the remaining cycles yield $T^* \setminus \{\vec e_1\} \cup \{ \vec e_2 \}$ as the MSA for $w'$. This completes the proof.
\end{proof}

\begin{remark}
It is known that in an MST, changing the weight of one edge changes the MST  at most two other edges (\cite[Lemma 3.15]{lyons2006minimal}). It is not hard to convince oneself, that if $S_{\tau_{N_1}}$ is a large set, the number of edges that changes by changing the weight of an edge can be arbitrarily large.
\end{remark}

Let us now prove an infinite volume version of \Cref{lem:perturbation,lem:perturbation2} for a generic collection of weights.

\begin{lemma}\label{lem:perturb_infinite}
Suppose $G = (V,E)$ is a locally finite, infinite, connected graph with  random weights $(W({\vec e}))_{\vec e \in \vec E}$ satisfying 
\begin{itemize}
\item $(W({\vec e}))_{\vec e \in \vec E}$ is a.s.\ generic (see \eqref{eq:linear_comb}),
\item CLEB walk is a.s.\ transient for all vertices and hence a.s.\ the wired limit of MSA exists, call it $T^*$.
\end{itemize}
Suppose $\mathsf v \in V$ and $\vec e_1,\vec e_2$ are picked measurably with respect to $W$ such that the tails of both $\vec e_1,\vec e_2$ is $\mathsf v$ and $\mathsf v \not \in \mathfrak F_{T^*}(\mathsf u)$ where $\mathsf u  = (\vec e_2)_+$ (whence $T' := T^*(W)\setminus \{\vec e_1 \} \cup \{\vec e_2\}$ is an arborescence). In case $\mathsf u \wedge \mathsf v \neq \emptyset$ pick  $\Sigma (\mathsf u, \mathsf v)$ according to \Cref{lem:perturbation} or pick $\Sigma(\mathsf v)$ according to \Cref{lem:perturbation2}. In case $\mathsf u \wedge \mathsf v =\emptyset$ pick $\Sigma(\mathsf u,\mathsf v)$ according to \Cref{lem:perturbation2}. Let $\vec E_\Sigma := \{\cO(v): v \in \Sigma\}$ and  $M_\Sigma = \max \{W(\vec e): \vec e \in \Sigma_{\vec E}\}$.
Pick a new collection of weights: 
  \begin{equation*}
 W'(\vec e ) 
 \begin{cases}
 \ge 2M_\Sigma \text{ if $\vec e \in \vec E_\Sigma \setminus (T^* \cup \{\vec e_2\})$ or $\vec e = \vec e_1$}\\
 = W(\vec e) \text{ otherwise.}
 \end{cases}
 \end{equation*}
 such that $W'(\vec e)$ is also a.s. generic.
Then $T'$ is the wired MSA limit for the weights $W'$.
\end{lemma}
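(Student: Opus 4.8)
The plan is to transfer the finite perturbation lemmas (\Cref{lem:perturbation,lem:perturbation2}) to the wired limit through an exhaustion, the only real work being to check that every finite object appearing in those lemmas stabilises. Fix any exhaustion $G_1 \subset G_2 \subset \cdots$ of $G$ and let $T^*_n$ be the MSA of $(G_n^{\w}, \partial_n)$ for the weights $W$. By the transience hypothesis together with \Cref{prop:necessary_cond}, the wired limit $T^*$ exists, $T^*_n \to T^*$ edge by edge, and $T^*$ can be sampled by the CLEB algorithm rooted at infinity; in particular the set $S_{\tau_{N_1}}$ exposed by the CLEB walk from $\mathsf v$ is a.s.\ finite. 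Consequently the chosen $\Sigma$ is a.s.\ a finite set of vertices: when $\mathsf u \wedge \mathsf v \neq \emptyset$ the futures of $\mathsf u$ and $\mathsf v$ merge at a finite vertex, so the $\Sigma$ of \Cref{lem:perturbation} is finite, while in every case the $\Sigma(\mathsf v)$ of \Cref{lem:perturbation2} is finite because it is built from the finite set $S_{\tau_{N_1}+1}$.

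Next I would fix $n$ so large that $G_n$ contains the closed $1$-neighbourhood of $\Sigma \cup \{\mathsf u, \mathsf v\}$ and that $T^*_n$ and $T^*$ agree on all outgoing edges of vertices in this neighbourhood. I claim that for such $n$ the hypotheses of the relevant finite lemma hold in $(G_n^{\w}, \partial_n)$ with exactly the same data $\vec e_1, \vec e_2, \Sigma, \vec E_\Sigma, M_\Sigma, W'$. Indeed, since the $1$-neighbourhood of $\Sigma$ lies inside $G_n$, no vertex of $\Sigma$ is incident to $\partial_n$, so $\vec E_\Sigma$ and $M_\Sigma$ computed in $G_n^{\w}$ coincide with their infinite-volume values, and $W'$ restricted to $G_n^{\w}$ is an admissible generic perturbation satisfying \eqref{eq:w'}. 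The intrinsic $\Sigma$ of \Cref{lem:perturbation2} computed from the CLEB walk inside $G_n^{\w}$ equals $\Sigma(\mathsf v)$, because that walk agrees step-for-step with the CLEB walk rooted at infinity until $\partial_n$ is reached and $S_{\tau_{N_1}+1}$ already lies inside $G_n$. Finally $\vec e_1 \in T^*_n$ by edge-by-edge convergence.

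The one delicate hypothesis is $\mathsf v \notin \mathfrak F_{T^*_n}(\mathsf u)$. When $\mathsf u \wedge \mathsf v \neq \emptyset$ this is immediate: the futures of $\mathsf u$ and $\mathsf v$ merge at a finite vertex, the merge is detected by $T^*_n$ for large $n$, and $\mathsf v$ lies on the common part below the merge, hence off $\mathfrak F_{T^*_n}(\mathsf u)$. When $\mathsf u \wedge \mathsf v = \emptyset$ I would invoke the connectivity results: by \Cref{lem:connection_inf} the futures of $\mathsf u, \mathsf v$ in $T^*$ do not meet, and by the local monotonicity of \Cref{prop:connectivity_convergence} the finite-volume events $\{\mathsf u \stackrel{T^*_n}{\longleftrightarrow} \mathsf v\}$ are nondecreasing with limit this non-meeting event, hence false for every $n$; thus $\mathsf u$ and $\mathsf v$ merge only at $\partial_n$ in $T^*_n$, and if $\mathsf v$ were on $\mathfrak F_{T^*_n}(\mathsf u)$ the two futures would already merge at $\mathsf v \neq \partial_n$, a contradiction. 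With all hypotheses verified, \Cref{lem:perturbation} (resp.\ \Cref{lem:perturbation2}) gives that the MSA $\hat T_n$ of $(G_n^{\w}, \partial_n)$ for the weights $W'$ equals $T^*_n \setminus \{\vec e_1\} \cup \{\vec e_2\}$. Since $T^*_n \to T^*$ and $\hat T_n$ differs from $T^*_n$ only in the outgoing edge of $\mathsf v$, which is the constant edge $\vec e_2$, we get $\hat T_n \to T^* \setminus \{\vec e_1\} \cup \{\vec e_2\} = T'$ edge by edge; as the exhaustion was arbitrary and $T'$ does not depend on it, the wired MSA limit for $W'$ exists and equals $T'$. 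The main obstacle, and the place where transience is essential, is guaranteeing the finiteness and eventual stabilisation of $\Sigma$ via $S_{\tau_{N_1}}$ together with the hypothesis $\mathsf v \notin \mathfrak F_{T^*_n}(\mathsf u)$; both are handled by combining edge-by-edge convergence with the connectivity monotonicity of \Cref{prop:connectivity_convergence,lem:connection_inf}.
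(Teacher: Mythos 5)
Your proposal is correct and follows essentially the same route as the paper's proof: exhaust the graph, use the monotonicity of connectivity (\Cref{prop:connectivity_convergence}, \Cref{lem:connection_inf}) together with a.s.\ edge-by-edge convergence to show $\mathsf v \notin \mathfrak F_{T^*_n}(\mathsf u)$ for large $n$, use transience (resp.\ stabilization of futures) to get $\Sigma \subset G_n$ eventually, apply \Cref{lem:perturbation,lem:perturbation2} in $G_n^{\w}$, and pass to the limit. Your verification that $\vec E_\Sigma$, $M_\Sigma$, and the intrinsic $\Sigma$ of \Cref{lem:perturbation2} coincide in finite and infinite volume is more explicit than the paper's, but it fills in the same steps rather than taking a different path.
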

\begin{proof}
Let $T^*_n$ be the MSA of $G_n^{\mathsf{w}}$ where $(G_n)_{n \ge 1}$ is an exhaustion of $G$. Let $\cE_{v,a,b}$ be the event that $\mathsf v = v, \vec e_1 = \vec a, \vec e_2 = \vec b$.  

We now claim that for almost all $\omega \in \cE_{v,a,b}$, there exists an $N(\omega)$ such that for all $n \ge N(\omega)$, $\mathsf v \not \in \mathfrak F_{T_n^*}(\mathsf u)$. To prove this claim, we partition $\cE_{v,a,b}$ into the union of two events: either $u$ and $v$ are in different components of $T^*$, or they are in the same component and $v \not \in \mathfrak F_{T^*}(u) $. In the former case, using the monotonicity of connectivity in \Cref{prop:connectivity_convergence}, we know that the components of $v$ and $u$ in $T^*_n$ are distinct for all $n \ge 1$, whence $\mathsf v \not \in \mathfrak F_{T_n^*}(\mathsf u)$ for all $n \ge 1$. In the latter case, since $T^*_n$ converges to $T^*$ a.s.\, for almost all $\omega \in \cE_{v,a,b}$, we can find find an $N(\omega)$ such that for all $n \ge N(\omega)$,  $\mathfrak F_{T^*_n}(u) , \mathfrak F_{T^*_n}(v)$ until they merge is the same as that of $\mathfrak F_{T^*}(u), \mathfrak F_{T^*}(v)$. Since $v \not \in  \mathfrak F_{T^*}(u)$, it is not in $\mathfrak F_{T^*_n}(u)$ as well for all such $n$.

We can increase $N$ further so that $\Sigma  \subset G_n$ for all $n \ge N$. Indeed, for the choice of $\Sigma$ from \Cref{lem:perturbation}, we can use the argument in the previous paragraph, and for the choice of $\Sigma$ from \Cref{lem:perturbation2}, we can use the assumption that the CLEB walk is a.s.\ transient from all vertices.
By \Cref{lem:perturbation,lem:perturbation}, $T_n'$ is in fact the MSA for the weights $(W'(\vec e))_{\vec e \in G_n^{\w}}$ for all $n \ge N$. Since $T_n'$ converges a.s.\ to $T'$, we have that $T'$ is the wired MSA limit for the weights $W'$ on $\cE_{v,a,b}$. Since $v,a,b$ was arbitrary, the proof is complete.
\end{proof}

We now argue that we can perturb weights as in \Cref{lem:perturb_infinite} so that the weights remain absolutely continuous. Let $\R_+ = [0, \infty)$. We start with a measure-theoretic fact. We say a probability density function $g$ is \textbf{good} if its support is $\R_+$ and for any $K>0$, there exists a constant $C(K)$ such that
\begin{equation}
    \sup_{u \in (0,\infty), x \in (0, K]} \frac{g(u-x)}{g(u)} \le C(K). \label{eq:good_g}
\end{equation}
A typical example of good densities for our application are exponential random variables.
\begin{lemma}\label{lem:abs_cont_easy}
Suppose ${\bf Z} = (Z_1,Z_2,\ldots,)$ is an i.i.d. sequences each having a good density $g$. Suppose ${\bf X}:= (X_1,X_2, \ldots)$ be a sequence which is independent of ${\bf Z}$ and its distribution is absolutely continuous with respect to that of ${\bf Z}$.     Suppose $\cI:= \{I_1,\ldots, I_d\} \subseteq \N$ is a.s.\ finite (and possibly empty) set of indices picked as a measurable function of ${\bf X}$  and let $X^* := \max\{X_{I_1}, \ldots, X_{I_d}\}$.  Define 
\begin{equation}
Y_{k} = 
\begin{cases}
2X^*+Z_{j} \text{ if  }k = I_j \text{ for some }  1 \le j \le d \\
X_{k}  \text{ if }k \not \in \cI
\end{cases}
\end{equation}
Then ${\bf Y}:= (Y_k)_{k \in \N}$ is absolutely continuous with respect to ${\bf Z}$.
\end{lemma}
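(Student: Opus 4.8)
The plan is to prove absolute continuity directly from the definition: I will show that $\P(\mathbf{Y} \in A) = 0$ for every measurable $A \subseteq \R_+^{\N}$ with $\P(\mathbf{Z} \in A) = 0$, writing $\mathrm{Law}(\mathbf{Z}) = \bigotimes_k g(z_k)\,dz_k$ for the target product measure. The first move is to slice the probability space. Since $\cI$ is a measurable function of $\mathbf{X}$ taking values among the countably many finite subsets of $\N$, and since $X^* \in (0,\infty)$ almost surely whenever $\cI \neq \emptyset$ (each $X_i>0$ a.s.\ because $\mathbf{X} \ll \mathbf{Z}$ and $g$ is a density), it suffices to control, for each fixed finite $J = \{i_1 < \cdots < i_d\}$ with $d \ge 1$ and each integer $K \ge 1$, the sub-probability measure $\mu_{J,K}(A) := \P(\mathbf{Y} \in A,\, B_{J,K})$, where $B_{J,K} := \{\cI(\mathbf{X}) = J\} \cap \{X^* \in (K-1,K]\}$ is an event determined by $\mathbf{X}$. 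Indeed $\P(\mathbf{Y}\in A) = \P(\mathbf{Y}\in A,\cI=\emptyset) + \sum_{J \ne \emptyset}\sum_{K \ge 1}\mu_{J,K}(A)$, and the first term is at most $\P(\mathbf{X} \in A) = 0$ because $\mathbf{Y} = \mathbf{X}$ on $\{\cI=\emptyset\}$ and $\mathbf{X} \ll \mathbf{Z}$.

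Fix such $J$ and $K$, and let $f := \tfrac{d\,\mathrm{Law}(\mathbf{X})}{d\,\mathrm{Law}(\mathbf{Z})}$, so $\P(\mathbf{X} \in d\mathbf{x}) = f(\mathbf{x})\prod_k g(x_k)\,dx_k$. Using that $\mathbf{Z}$ is independent of $\mathbf{X}$ (so $Z_1,\dots,Z_d$ are i.i.d.\ with density $g$ given $\mathbf{X}$) and that on $B_{J,K}$ the vector $\mathbf{Y}$ equals $\mathbf{x}$ off $J$ while $Y_{i_j}=2x^*+Z_j$ on $J$, with $x^*:=\max_j x_{i_j}$, I would write
\begin{equation*}
\mu_{J,K}(A) = \int \1_{B_{J,K}}(\mathbf{x})\, f(\mathbf{x}) \left[ \int_{\R_+^d} \1_A(\mathbf{y})\, \prod_{j=1}^d g(z_j)\,dz_j \right] \prod_k g(x_k)\,dx_k,
\end{equation*}
where inside the bracket $\mathbf{y}$ denotes the sequence with $y_k=x_k$ for $k\notin J$ and $y_{i_j}=2x^*+z_j$. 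In the inner integral I substitute $z_j = y_{i_j}-2x^*$ (so $y_{i_j}$ ranges over $(2x^*,\infty)$) and invoke \eqref{eq:good_g}: since $2x^* \in (0,2K]$ on $B_{J,K}$, one has $g(y_{i_j}-2x^*) \le C(2K)\,g(y_{i_j})$ for every $j$. Enlarging the domain from $(2x^*,\infty)^d$ to $\R_+^d$ (the integrand being nonnegative) gives
\begin{equation*}
\int_{\R_+^d} \1_A(\mathbf{y})\, \prod_{j} g(z_j)\,dz_j \le C(2K)^d \int_{\R_+^d} \1_A(\mathbf{y})\, \prod_{j} g(y_{i_j})\,dy_{i_j}.
\end{equation*}

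The decisive observation is that this last bracket no longer depends on $x_{i_1},\dots,x_{i_d}$: the $y_{i_j}$ are dummy variables and only $\mathbf{x}_{J^c}=(x_k)_{k\notin J}$ survives inside $\mathbf{y}$. This is exactly where the apparent obstruction dissolves — although the shift $2x^*$ is a function of $\mathbf{x}_J$, those coordinates are overwritten in $\mathbf{Y}$, so after the substitution the bracket is decoupled from $\mathbf{x}_J$. I may therefore integrate out $\mathbf{x}_J$ to define $h(\mathbf{x}_{J^c}) := \int \1_{B_{J,K}}(\mathbf{x})\, f(\mathbf{x}) \prod_j g(x_{i_j})\,d\mathbf{x}_J$, a nonnegative function with $\int h(\mathbf{x}_{J^c}) \prod_{k\notin J} g(x_k)\,d\mathbf{x}_{J^c} = \P(\mathbf{X}\in B_{J,K}) \le 1$. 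Relabelling $\mathbf{x}_{J^c}$ as $\mathbf{y}_{J^c}$ and recombining the two products into $\prod_k g(y_k)$ yields
\begin{equation*}
\mu_{J,K}(A) \le C(2K)^d \int \1_A(\mathbf{y})\, h(\mathbf{y}_{J^c}) \prod_k g(y_k)\, d\mathbf{y} = C(2K)^d\, \E\big[ \1_A(\mathbf{Z})\, h(\mathbf{Z}_{J^c}) \big].
\end{equation*}
Since $h(\mathbf{Z}_{J^c})$ is integrable and $\P(\mathbf{Z}\in A)=0$, the right-hand side vanishes, so $\mu_{J,K}(A)=0$; summing over $J$ and $K$ completes the argument.

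I expect the main obstacle to be precisely the coupling created by the shift $2X^*$, which is a function of the coordinates of $\mathbf{X}$ indexed by $\cI$ — the very coordinates that $\mathbf{Y}$ discards. Two devices resolve it: (i) slicing on $\{X^*\in(K-1,K]\}$ to keep the shift bounded, so that \eqref{eq:good_g} supplies a finite Radon–Nikodym factor $C(2K)^d$; and (ii) carrying out the change of variables in the $J$-coordinates \emph{before} integrating $\mathbf{x}_J$, which is what makes the inner integral independent of $\mathbf{x}_J$ and lets the surviving $J^c$-coordinates inherit absolute continuity from $\mathbf{X}\ll\mathbf{Z}$. A minor technical point, which I would dispatch with a standard disintegration remark, is the rigorous handling of densities on the infinite product $\R_+^{\N}$; all the integral manipulations above are justified for nonnegative integrands by Tonelli.
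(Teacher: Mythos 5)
Your proof is correct, and its analytic core is the same as the paper's: truncate so that the shift $2X^*$ is bounded by a constant, change variables $z_j = y_{i_j} - 2x^*$ in the overwritten coordinates, and invoke \eqref{eq:good_g} to pay a factor $C(2K)^{d}$ per coordinate. Where you genuinely diverge is in how the hypothesis ${\bf X} \ll {\bf Z}$ enters. The paper first reduces to the case ${\bf X} \stackrel{d}{=} {\bf Z}$ by an abstract pushforward argument: writing ${\bf Y} = \psi({\bf X},{\bf Z})$ and noting that $({\bf X},{\bf Z}) \ll ({\bf Z}',{\bf Z})$ forces $\psi({\bf X},{\bf Z}) \ll \psi({\bf Z}',{\bf Z})$, it suffices to treat exactly i.i.d.\ input; the recombination of the $S$- and $S^c$-coordinates at the end is then immediate because the law of ${\bf X}$ is a product measure. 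You instead keep ${\bf X}$ general, carry the Radon--Nikodym derivative $f$ through the computation, and decouple the discarded coordinates from the surviving ones by integrating out ${\bf x}_J$ into the auxiliary function $h({\bf x}_{J^c})$, closing with $\E\bigl[\1_A({\bf Z})\,h({\bf Z}_{J^c})\bigr]=0$. The two devices are interchangeable: the paper's reduction is slicker and outsources the bookkeeping to one abstract observation, while yours is more self-contained and makes explicit the structural reason the argument works, namely that after the substitution only the $J^c$-marginal of ${\bf X}$ (which inherits absolute continuity from ${\bf X}\ll{\bf Z}$) is relevant. Two further, minor differences: you control the shift by partitioning on $\{X^*\in(K-1,K]\}$ where the paper uses monotone convergence over $\{x_S\in[0,K]^S\}$ (equivalent), and you handle the $\cI=\emptyset$ case explicitly, which the paper leaves implicit. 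Your concluding remark on disintegration is the right justification: every manipulation alters only finitely many coordinates, so Tonelli for $\mathrm{Law}({\bf Z}) = \mathrm{Law}({\bf Z}_J)\otimes\mathrm{Law}({\bf Z}_{J^c})$ makes the formal density notation on $\R_+^{\N}$ rigorous.
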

\begin{proof}
We first observe that without loss of generality, we can assume ${\bf X}$ has the same distribution as that of ${\bf Z}$. Indeed, let $\psi$ be the measurable map as described in the lemma such that ${\bf Y}  = \psi({\bf X}, {\bf Z})$. Let ${\bf Z'} $ be an independent copy of ${\bf Z}$. Since $({\bf X}, {\bf Z})$ is absolutely continuous with respect to $({\bf Z'}, {\bf Z})$ by assumption, $\psi(({\bf X}, {\bf Z}))$ is absolutely continuous with respect to $\psi({\bf Z'}, {\bf Z})$. Thus if we show that the distribution of $\psi({\bf Z'}, {\bf Z})$ is absolutely continuous with respect to the distribution of ${\bf Z}$, we are done. In other words, we can assume ${\bf X} = {\bf Z}$ in distribution, which we do from now on.

For any sequence $x$ and $S \subset \N$ a subsequence  (finite or infinite), let $x_S$ denote the sequence $(x_i)_{i \in S}$.
Let $\mu_{\bf X}$ be the measure induced by $\bf X$ on $\R_+^\N$.
    For a measurable set $A$, $x \in \R_+^\N$ and a finite set $S \subset \N$, define
\[ A^{x, S} = \{ z_S \in [0, \infty)^S:   ( z_S  + 2\max_{s \in S} x_s , x_{S^c} )  \in A \} \subseteq \R_+^S,\]
and let $ {\bf Z}_S = (Z_x)_{x \in S}$. 
Note that
\begin{align}
\P( {\bf Y}  \in A, \cI = S) &=  \int_{x: \cI(x) = S}  \P( {\bf Z}_S \in A^{x, S})  \mu_{\bf X}(dx) \nonumber\\
&= \int_{ x: \cI(x) = S}  \int_{ z_S \in [0, \infty)^S} \1_{ \{ (z_S  + 2\max_{s \in S} x_s , x_{S^c} )  \in A \}}   \prod_{s \in S}g(z_s) dz_S \mu_{\bf X}(dx) \nonumber\\
&=  \int_{ x: \cI(x) = S }  \int_{ u_S \ge 2\max_{s \in S} x_s  } \1_{ \{ (u_S , x_{S^c} )  \in A \}}  \prod_{s \in S}g(u_s  -  2\max_{s \in S} x_s)  du_S \mu_{\bf X}(dx)\label{eq:measure1}
\end{align}
where in the final equation, we applied the change of variable $ u_s = z_s +  2\max_{s \in S} x_s$ for $s \in S$.

Now assume $\mu_{\bf X}(A) = 0$. By monotone convergence theorem and \eqref{eq:measure1}, it is enough to prove for every finite $S$ and $K>0$,
\begin{equation}
    \int_{ x: \cI(x) = S , x_S \in [0,K]^S}  \int_{ u_S \ge 2\max_{s \in S} x_s  } \1_{ \{ (u_S , x_{S^c} )  \in A \}}  \prod_{s \in S}\frac{g(u_s  -  2\max_{s \in S} x_s)}{g(u_s)} g(u_s)  du_S \mu_{\bf X}(dx)=0 \label{eq:measure2}
\end{equation}
Now we use the condition that $g$ is good, and conclude that there exists some constant $C(K)$ such that
$$
\prod_{s \in S}\frac{g(u_s  -  2\max_{s \in S} x_s)}{g(u_s)} \le (C(2K))^{|S|} \qquad \text{ for all $u_S$ and for all $x_S \in [0,K]^S$ } .
$$
Applying this and then getting rid of the conditions $\{\cI(x) = S , x_S \in [0,K]^S \}$ and $\{u_S \ge 2\max_{s \in S}x_s\}$, the left hand side of \eqref{eq:measure2} can be upper bounded by 
\begin{equation}
   (C(2K))^{|S|}  \int_{\R_+^S} \int_{\R_+^S \times \R_+^{S^c}} \1_{ \{ (u_S , x_{S^c} )  \in A \}} d\mu_{{\bf X}_S}(du_S) d\mu_{\bf X}(dx) = (C(2K))^{|S|} \mu_{\bf X}(A) = 0,
\end{equation}
as desired. In the last step we used the fact that ${\bf X}_S$ has the density $\prod_{s \in S}g(u_s)$.
\end{proof}
\begin{remark}
    The condition \eqref{eq:good_g} is assumed for the sake of brevity. It is possible to modify \Cref{lem:abs_cont_easy} in various ways for different choices of distributions, but we do not pursue such avenues in this article.
\end{remark}

\begin{corollary}\label{lem:abs_cont_W'}
Suppose we are in the setup of \Cref{lem:perturb_infinite}. Let $W:=(W(\vec e))_{\vec e}$ be i.i.d. with a good density (satisfying \eqref{eq:good_g}) and suppose it satisfies the assumptions in \Cref{lem:perturb_infinite} and let $T^*$ be the a.s.\ wired MSA limit for these weights. Let $(Z(\vec e))_{\vec e}$ be i.i.d. and distributed as $g$, independent of $W$. Let $\varphi$ be the operation $(W,T^*) \mapsto (W',T')$ defined as follows:
\begin{itemize}
\item Measurably pick a vertex $\sf v$ and edges $\vec e_1, \vec e_2$ so that $\sf v$ is the tail of both $e_1,e_2$, and such that ${\sf v}  \not \in \mathfrak F_{T^*}(\mathsf u)$ with $\mathsf u = (\vec e_2)_+$.

\item Pick $\Sigma, M_\Sigma, \vec E_\Sigma$ as in \Cref{lem:perturb_infinite}. Define 
 \begin{equation*}
 W'(\vec e ) 
=  \begin{cases}
  2M_\Sigma + Z(\vec e) \text{ if $\vec e \in \vec E_\Sigma \setminus (T^* \cup \{\vec e_2\})$ or $\vec e = \vec e_1$},\\
  W(\vec e) \text{ otherwise.}
 \end{cases}
 \end{equation*}
 \item Define $T' = T^* \setminus \{e_1\} \cup \{e_2\}$.
\end{itemize}
Let $(W_n, T_n) := \varphi^{(n)} (W,T^* ) $ where $\varphi^{(n)}$ is the $n$-fold composition of $\varphi$ where for each iteration we use an independent set of i.i.d. $(Z(\vec e)_{\vec e \in \vec E})$ distributed with density $g$. Then $W_n$ is absolutely continuous with respect to $W$ and $T_n$ is a wired MSA limit of $W_n$.
\end{corollary}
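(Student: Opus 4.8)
The plan is to prove both assertions simultaneously by induction on $n$, with the absolute-continuity statement doing the heavy lifting and the MSA-limit statement following from \Cref{lem:perturb_infinite}. Write $(W_0, T_0) = (W, T^*)$ and $(W_n, T_n) = \varphi(W_{n-1}, T_{n-1})$, where the $n$-th application of $\varphi$ uses a fresh family $(Z^{(n)}(\vec e))_{\vec e \in \vec E}$ of i.i.d.\ $g$-variables, independent of $W$ and of $Z^{(1)}, \ldots, Z^{(n-1)}$, and hence independent of $W_{n-1}$. The inductive hypothesis $H(n-1)$ asserts that $W_{n-1}$ is absolutely continuous with respect to $W$ and that $T_{n-1}$ is the wired MSA limit of $W_{n-1}$; the base case $H(0)$ is the hypothesis of the corollary.

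For the inductive step I would first upgrade $H(n-1)$ to the running hypotheses of \Cref{lem:perturb_infinite} for the weights $W_{n-1}$. Since $W$ is i.i.d.\ with a density, the set of generic configurations has full $\mu_W$-measure, and the event $B = \{\text{CLEB walk transient from every vertex}\}$ also has full $\mu_W$-measure by assumption; both are fixed measurable subsets of weight space, so absolute continuity of $W_{n-1}$ forces $W_{n-1}$ to be a.s.\ generic and the CLEB walk to be a.s.\ transient under $W_{n-1}$. Together with $T_{n-1}$ being the wired MSA limit of $W_{n-1}$ and the measurable selection $\mathsf v \notin \mathfrak F_{T_{n-1}}(\mathsf u)$ built into $\varphi$, this supplies every hypothesis of \Cref{lem:perturb_infinite} except the genericity of the output $W_n$.

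The crux is therefore to show $W_n \ll W$, after which genericity of $W_n$ is again automatic (generic is a full-measure event under $W$), and \Cref{lem:perturb_infinite} then yields that $T_n = T_{n-1} \setminus \{\vec e_1\} \cup \{\vec e_2\}$ is the wired MSA limit of $W_n$, completing $H(n)$. To prove $W_n \ll W$, I would apply \Cref{lem:abs_cont_easy} with $\mathbf X = W_{n-1}$ and $\mathbf Z = Z^{(n)}$: by $H(n-1)$, $W_{n-1}$ is a.c.\ with respect to $W$, which has the same law as the i.i.d.-$g$ family $Z^{(n)}$, so $\mathbf X \ll \mathbf Z$ in law; the perturbed index set $\cI = (\vec E_\Sigma \setminus (T_{n-1} \cup \{\vec e_2\})) \cup \{\vec e_1\}$ is a measurable function of $W_{n-1}$ and is a.s.\ finite, because $\Sigma$ is a.s.\ finite (for the choice of \Cref{lem:perturbation2} since $S_{\tau_{N_1}+1}$ is a.s.\ finite by transience, and for the choice of \Cref{lem:perturbation} since the two futures merge at $\mathsf u \wedge \mathsf v$ after finitely many steps). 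As $\varphi$ sets $W_n(\vec e) = 2M_\Sigma + Z^{(n)}(\vec e)$ on $\cI$, the lemma gives $W_n \ll W$.

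The one place where I need slightly more than the literal statement of \Cref{lem:abs_cont_easy} is that there $X^* = \max_{k \in \cI} X_k$, whereas $\varphi$ uses $M_\Sigma = \max_{\vec e \in \vec E_\Sigma} W(\vec e)$, a maximum over the possibly larger finite set $\vec E_\Sigma \supseteq \cI$. This is only a cosmetic extension: the proof of \Cref{lem:abs_cont_easy} uses solely that the shift $2X^*$ is a nonnegative, a.s.\ finite, measurable function of $\mathbf X$, so that after the change of variables $u_s = z_s + 2M_\Sigma$ one may restrict to $\{M_\Sigma \le K\}$ and bound $\prod_{s \in \cI} g(u_s - 2M_\Sigma)/g(u_s) \le (C(2K))^{|\cI|}$ via \eqref{eq:good_g}, then send $K \to \infty$ by monotone convergence exactly as in the original argument. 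I expect this bookkeeping---matching $M_\Sigma$ to the role of $X^*$ and confirming finiteness of $\cI$---to be the only genuine obstacle, since the propagation of genericity and transience across iterations comes for free from absolute continuity and the MSA-limit claim is a direct appeal to \Cref{lem:perturb_infinite}.
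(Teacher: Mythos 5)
Your proposal follows essentially the same route as the paper's proof: induction on $n$, propagating genericity and a.s.\ transience of the CLEB walk to $W_{n-1}$ via absolute continuity (full-measure events under $\mu_W$ pull back), then invoking \Cref{lem:perturb_infinite} for the claim that $T_n$ is the wired MSA limit of $W_n$ and \Cref{lem:abs_cont_easy} with $\mathbf{X}=W_{n-1}$, $\mathbf{Z}=Z^{(n)}$ for $W_n \ll W$. Your additional observation---that the shift $2M_\Sigma$ is a maximum over $\vec E_\Sigma$ rather than over the perturbed index set $\cI$ alone, so \Cref{lem:abs_cont_easy} requires the harmless extension you describe (restrict to $\{M_\Sigma \le K\}$, apply \eqref{eq:good_g}, let $K \to \infty$)---is a genuine gap that the paper's one-paragraph proof silently elides, and your treatment of it is correct.
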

\begin{proof}
This is a consequence of the repeated application of \Cref{lem:perturb_infinite,lem:abs_cont_easy}. Indeed,  suppose $W_n$ is absolutely continuous with respect to $W$ and let $T_n$ be the wired MSA limit for the weights $W_n$ (this is true for $n=0$ by assumption). Then by absolute continuity, $W_n$ satisfies the assumptions in \Cref{lem:perturb_infinite}. Thus, $T_{n+1}$ is the wired MSA limit of $W_{n+1}$ by \Cref{lem:perturb_infinite}. Furthermore, since, $W_n$ is absolutely continuous with respect to $W$,  $W_{n+1}$ is absolutely continuous with respect to $W$ as well using \Cref{lem:abs_cont_easy}. 
\end{proof}

\subsection{Ends of MSA in unimodular graphs}\label{sec:end_unimod}
We first recall the notion of unimodular random rooted graphs.
A \textbf{rooted graph} is a locally finite, connected graph $G$ where one vertex $\rho$ is specified as the root.
We will also consider \textbf{marked rooted graphs} which are triplets $(G, \rho, m)$ where $m:E\cup V \mapsto \Omega$ is a mapping to a Polish space $\Omega$.
Two (marked) rooted graphs are equivalent if there is a graph isomorphism between them that preserves the root and the marks.

Let $\cG^\bullet$ (resp.\ $\cGm$) denote the space of equivalence classes of rooted graphs (resp.\ marked rooted graphs) endowed with the \textbf{local topology}:
informally, two marked rooted graphs are close if for some large $R$ the balls of radius $R$ are isomorphic as graphs, and the restriction of the marks to the balls are uniformly close
(see \cite{benjamini2011recurrence}).
We similarly use $\cG_{\mathsf m}^{\bullet \bullet}$ to denote the space of equivalence classes of doubly rooted marked graphs $(G, \rho_1,\rho_2, m)$.
 A random rooted marked graph $(G, \rho, M) \in \cG_{\mathsf m}^{\bullet}$ is \textbf{unimodular} if it satisfies the mass transport principle, i.e., for all measurable $f:\cG_{\mathsf m}^{\bullet\bullet} \mapsto [0, \infty)$,
\begin{equation*}
\E\left(\sum_{x \in V(G)} f((G,\rho,x,M))\right) = \E\left(\sum_{x \in V(G)} f((G,x,\rho,M)) \right).
\end{equation*}
Informally, we regard $f((G,x,y,M))$ as the mass sent from $x$ to $y$, and then unimodularity means that the expected total mass sent by $\rho$ equals the expected total mass received by $\rho$.

Suppose $(G, \rho)$ is a unimodular, random rooted directed graph where the edge set $\vec E$ is oriented in both directions. Suppose that given $(G, \rho)$, every oriented edge is endowed with i.i.d.\ random variables satisfying \eqref{eq:linear_comb}, denoted $(U_{\vec e})_{\vec e \in \vec E}$. Note that by unimodularity, the CLEB walk is almost surely transient at $\rho$ if and only if it is transient from all vertices in $G$ almost surely (see \cite[Lemma 2.3]{AL_unimodular} or \cite{curiennotes}). In that case, we say that $(G, \rho)$ is a random rooted graph which is a.s.\ transient for the CLEB walk. Recall that if $(G, \rho)$ is unimodular and a.s.\ transient for the CLEB walk, then the wired MSA limit exists almost surely (\Cref{prop:necessary_cond}). Throughout this section, we assume $(G, \rho)$ is a random rooted graph that is connected, locally finite, and infinite almost surely, which may contain multiple edges but no self-loops.
\begin{lemma}\label{lem:MSA_unimodular}
Suppose $(G, \rho)$ is a unimodular random rooted graph with i.i.d.\ weights $W:=(W_{\vec e})_{\vec e \in \vec E}$ which is a.s.\ transient for the CLEB walk. Let $T$ be the wired MSA limit on $(G, \rho)$. Then $(G, \rho, T)$ is a unimodular random rooted marked graph.
\end{lemma}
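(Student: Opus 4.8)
The plan is to exhibit the wired MSA limit $T$ as an isomorphism-equivariant measurable factor of the i.i.d.-weighted graph, and then to appeal to the elementary principle that a factor of a unimodular random graph is again unimodular.

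First I would upgrade $(G,\rho)$ to the weighted graph $(G,\rho,W)$ and record that it is unimodular. This is the standard statement that endowing a unimodular random rooted graph with an i.i.d.\ family of edge marks (here the weights $W=(W_{\vec e})_{\vec e \in \vec E}$) produces a unimodular random rooted marked graph; see \cite{AL_unimodular}. Concretely, for any measurable $h:\cG_{\mathsf m}^{\bullet\bullet}\to[0,\infty)$ the mass transport identity holds for $(G,\rho,W)$ because the i.i.d.\ weights are sampled independently of the rooting, so the mass transport principle for $(G,\rho)$ can be applied after conditioning on $W$ and then averaging.

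The crux is the second step: I would argue that $T$ is a deterministic, measurable function $F$ of the weighted graph $(G,W)$ alone --- in particular independent of the root --- and equivariant under graph isomorphisms, i.e.\ $F(\phi G,\phi W)=\phi\,F(G,W)$ for every isomorphism $\phi$. To see this, fix an oriented edge $\vec e$ and take the exhaustion by balls $G_n=B_G(\rho,n)$. By the a.s.\ transience hypothesis together with \Cref{prop:necessary_cond}, the indicators $\mathbf 1_{\vec e\in T_n}$, where $T_n$ is the MSA of $(G_n^{\w},\partial_n)$, converge almost surely, and their common limit $\mathbf 1_{\vec e\in T}$ does not depend on the chosen exhaustion. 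Since computing $T$ via balls centred at $\rho$ and via balls centred at any other vertex yields the same subset of $\vec E$ almost surely, $T$ is a function of $(G,W)$ that does not reference the root, and measurability is inherited from the finite-volume approximations. This is precisely the assertion that $T=F(G,W)$ is an equivariant factor.

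Finally I would run the mass transport principle. Given measurable $f:\cG_{\mathsf m}^{\bullet\bullet}\to[0,\infty)$, set
\[
\tilde f(G,\rho,x,W):=f\big(G,\rho,x,F(G,W)\big),
\]
which is a measurable function of the doubly rooted $W$-decorated graph. Because $F$ does not depend on the root, $\tilde f(G,\rho,x,W)=f(G,\rho,x,T)$ and $\tilde f(G,x,\rho,W)=f(G,x,\rho,T)$ with the same $T$. Applying the mass transport principle for the unimodular graph $(G,\rho,W)$ to $\tilde f$ then gives
\[
\E\Big(\sum_{x\in V(G)} f(G,\rho,x,T)\Big)=\E\Big(\sum_{x\in V(G)} f(G,x,\rho,T)\Big),
\]
which is exactly unimodularity of $(G,\rho,T)$. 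The main obstacle is the equivariance/root-independence claim in the second step: one must ensure that $T$ is genuinely a function of the unrooted weighted graph rather than an artefact of the exhaustion or of the choice of root. This is what the exhaustion-independence in \Cref{prop:necessary_cond} provides, so the whole argument hinges on invoking it correctly.
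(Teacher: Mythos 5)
Your proposal is correct and follows essentially the same route as the paper: the paper's proof likewise notes that $(G,\rho,W)$ is unimodular and that $T$ is a measurable, root-independent function of $(G,\rho,W)$, so adding $T$ as a mark preserves unimodularity. You merely spell out the two ingredients the paper leaves implicit --- the root-independence via the exhaustion-independence of \Cref{prop:necessary_cond}, and the explicit mass transport computation --- which is a faithful expansion rather than a different argument.
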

\begin{proof}
It is easy to see that $(G, \rho, W)$ is a unimodular random rooted marked graph. Since $T$ is a measurable function of $(G, \rho, W)$ and this function does not depend on the location of the root, $(G, \rho, W, T)$ is a unimodular random rooted marked graph (in fact, $(G, \rho, T)$ is what is called a graph factor of i.i.d., but we do not need this fact, see \cite[Section~2.1]{loopO1} for further discussions).
\end{proof}


\begin{lemma}\label{lem:MSA_properties}
Let $(G, \rho)$ be unimodular and a.s.\ transient for the CLEB walk, and let $T$ be the wired MSA limit on $(G, \rho)$. Then the following holds.
\begin{enumerate}[(a)]
\item The expected number of total incoming and outgoing edges in $T$ incident to $\rho$  is 2.
\item Every component of $T$ has at most 2 ends almost surely.
\item If $(G, \rho)$ is invariantly nonamenable, then $T$ has infinitely many infinite components. Furthermore, the number of infinite components that are two-ended is either 0 or $\infty$ a.s.\ and the same statement holds for the number of one-ended infinite components.
\end{enumerate}
\end{lemma}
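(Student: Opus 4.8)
The plan is to handle the three parts in order, using that $(G,\rho,T)$ is a unimodular random rooted marked graph (\Cref{lem:MSA_unimodular}) together with the mass-transport principle (MTP). For part (a), observe first that since the wired limit $T$ is the a.s.\ transient MSA, the future of every vertex is an infinite oriented path, so every vertex has out-degree exactly $1$ in $T$. I would then apply the MTP to $f((G,x,y,T)) = \1[\text{some } \vec e \in T \text{ has } \vec e_- = x,\ \vec e_+ = y]$ (summed over parallel edges), which gives $\E[\deg^{\mathrm{out}}_T(\rho)] = \E[\deg^{\mathrm{in}}_T(\rho)]$. Since the left-hand side is $1$, the in-degree also has expectation $1$, and the total expected number of incident edges is $2$.

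For part (b), by \Cref{lem:MSA_unimodular} the undirected component of $\rho$ in $T$ is a unimodular random tree whose expected root degree is $2$ by part (a). I would then invoke the standard fact that a unimodular random tree with $\E[\deg(\rho)] = 2$ has at most two ends almost surely. For a self-contained version, note that a tree with at least three ends contains a trifurcation vertex (one whose removal leaves at least three infinite components); in the oriented picture all futures in a component merge into one common end, so three ends force a vertex with two disjoint infinite lines of descent, i.e.\ a trifurcation. A mass-transport count then shows that a positive density of trifurcations forces $\E[\deg(\rho)] > 2$, contradicting part (a); hence at most two ends.

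For part (c), I would first pass to the ergodic case via the ergodic decomposition of unimodular measures, which preserves both invariant nonamenability and $\E[\deg(\rho)] < \infty$. Because every future is infinite, every component of $T$ is infinite, so the number of infinite components equals the total number of components; this is an invariant quantity, a.s.\ constant and lying in $\{0,1,\infty\}$ by the usual $\{0,1,\infty\}$ law for invariant percolation on unimodular graphs. It is clearly at least $1$. If it equalled $1$, then $T$ would be a connected invariant spanning tree with at most two ends (part (b)); but the existence of such a tree is equivalent to invariant amenability of $(G,\rho)$ (Benjamini--Lyons--Peres--Schramm / Aldous--Lyons), contradicting the hypothesis. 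Hence the number of infinite components is $\infty$.

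Finally, for the one-/two-ended dichotomy, ``one-ended'' and ``two-ended'' are isomorphism-invariant properties of components, so each count is invariant and a.s.\ constant. For the two-ended count I would mark, inside each two-ended component, its unique bi-infinite geodesic (trunk); the union of trunks is an invariant percolation whose infinite clusters are in bijection with the two-ended components, so by the $\{0,1,\infty\}$ law this count lies in $\{0,1,\infty\}$, and I would exclude the value $1$ by a mass-transport argument exploiting invariant nonamenability (a single invariant bi-infinite geodesic cannot persist among infinitely many components). The one-ended count is treated symmetrically, using that each one-ended component has all pasts finite and a canonical end. I expect the two genuinely hard points to be exactly these: the transfer from ``invariant spanning tree with $\le 2$ ends'' to invariant amenability, and the exclusion of exactly one component of a given type. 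Both rely on the heavier unimodular machinery (mass transport and the amenability characterization of \cite{AL_unimodular,BLPS_noperc}) rather than on the combinatorics of the MSA itself.
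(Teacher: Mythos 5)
Your parts (a) and (b) are correct and essentially identical to the paper's argument: the same mass-transport computation (out-degree is exactly $1$ since every vertex has an infinite future, hence expected in-degree is $1$ by the MTP) gives expected total degree $2$, and part (b) is the appeal to the standard unimodular-tree fact, which the paper simply cites as \cite[Theorem 6.2]{AL_unimodular}; your trifurcation sketch is the usual proof of that fact. The genuine gap is in part (c), where you twice invoke a ``$\{0,1,\infty\}$ law for invariant percolation on unimodular graphs.'' No such law exists for general invariant percolations: the Newman--Schulman/Burton--Keane dichotomies require insertion tolerance, and without it the number of infinite clusters can be any constant. A concrete counterexample: on the transitive, nonamenable, unimodular graph $T_3 \times K_2$ ($T_3$ the $3$-regular tree), the deterministic --- hence automorphism-invariant --- percolation that keeps exactly the edges lying in a single $4$-cycle (i.e.\ the two copies of $T_3$, deleting the rungs) has exactly two infinite clusters. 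The MSA limit $T$ has no evident insertion-tolerance property, and you do not attempt to establish one, so your reduction of all three counts to the value $1$ is unjustified. Moreover, even granting that reduction, your treatment of the value $1$ covers only the total count of components (where the unique component is spanning, so the ``invariant spanning tree with at most two ends implies invariant amenability'' result applies); for the two-ended count, a unique two-ended component sits among infinitely many other components and is not spanning, and there your exclusion of the value $1$ is only a one-sentence sketch.

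The paper's proof avoids all of this with a single mechanism that handles every finite value $k \ge 1$ at once: if with positive probability one of the three counts equals some finite $k \ge 1$, condition on that event and choose one of the $k$ components uniformly at random (the independent uniform choice preserves invariance/unimodularity). The chosen component is then a connected invariant percolation whose unique infinite cluster has at most two ends by part (b), hence has critical Bernoulli site-percolation parameter equal to $1$; by \cite[Theorem 8.9]{AL_unimodular} this forces $(G,\rho)$ to be invariantly amenable, a contradiction. This one argument proves both that the number of infinite components is infinite and that the one-ended and two-ended counts lie in $\{0,\infty\}$. If you want to salvage your write-up, you should replace both appeals to the nonexistent $\{0,1,\infty\}$ law by this conditioning-and-uniform-choice step (or by some argument covering all finite $k \ge 2$, e.g.\ applying your hyperfiniteness construction to the spanning forest with finitely many $\le 2$-ended components), and replace the sketched mass-transport exclusion of a single two-ended component by the same appeal to \cite[Theorem 8.9]{AL_unimodular}.
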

\begin{proof}
The proofs are quite standard (see, e.g., \cite{AL_unimodular}), but we still write them here for completeness. We define a mass transport that transports mass 1 from $x$ to $y$ if there is an oriented edge of $T$ from $x$ to $y$. The mass out of $\rho$ is $1$. The expected mass in to $\rho$ is also 1 by the mass transport principle. Consequently, the expected degree of $\rho$ is 2. Part (b) follows from \cite[Theorem 6.2]{AL_unimodular}.

For part (c) note that each component of $T$ is necessarily infinite as is clear from the CLEB algorithm. Suppose the number of infinite components is finite with positive probability. Conditioned on that event, pick one component uniformly at random, and let $\tau$ be the (unoriented) bond percolation configuration obtained by opening an edge of $G$ if and only if it is occupied by some oriented edge of $\tau$.  This gives an invariant percolation $\tau$ on $(G, \rho)$, which is connected a.s.\, and the critical Bernoulli site percolation threshold equal to 1 on $\tau$ since $\tau$ has at most two ends a.s.\ by part (b). This in particular implies $(G, \rho)$ is amenable by \cite[Theorem 8.9]{AL_unimodular}, a contradiction.

For the number of two-ended or one-ended infinite components, if there are finitely many with positive probability, condition on that event and select a component uniformly at random. This leads to the same contradiction.
\end{proof}
\begin{figure}[ht]
\centering
\includegraphics[scale = 0.5]{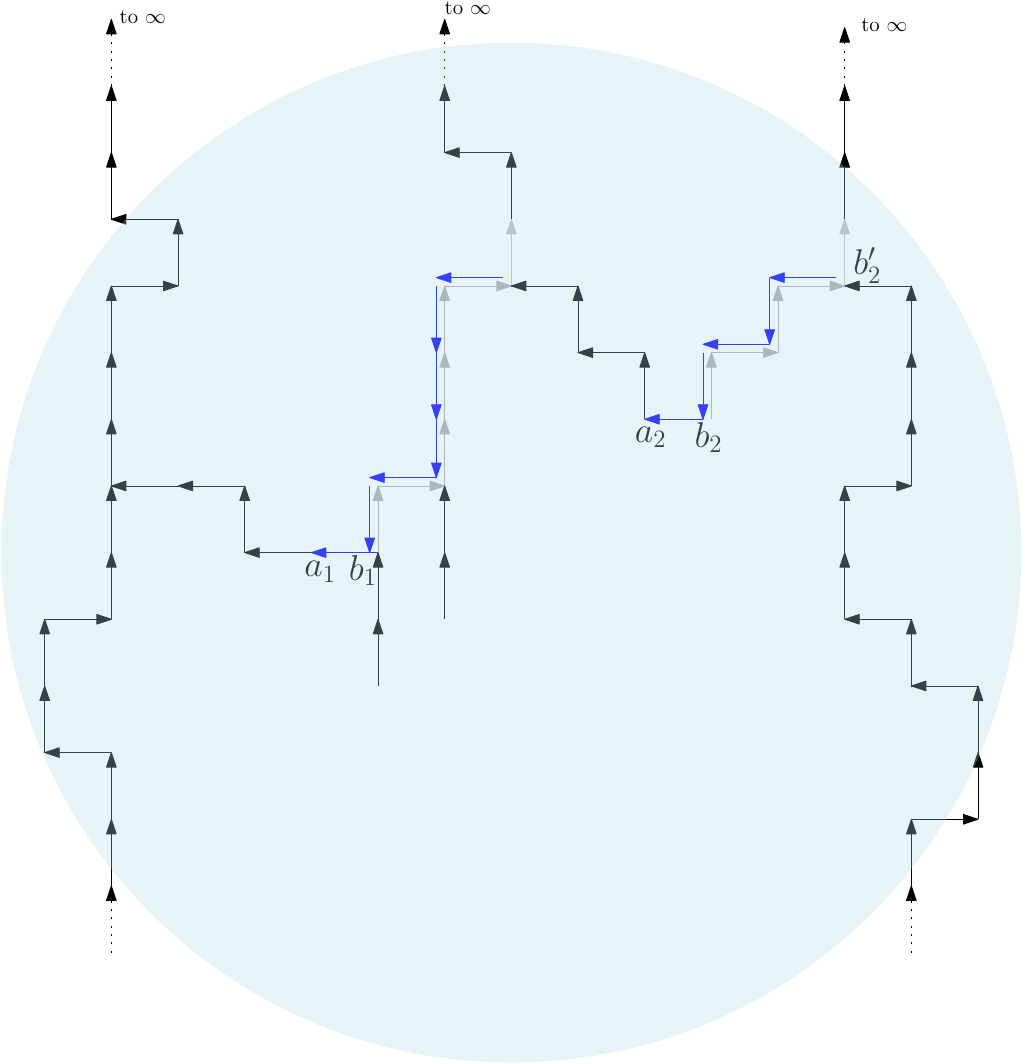}
\caption{The surgery in the proof of \Cref{thm:end}. The gray edges are the edges removed and replaced by the blue edges. The ball $B_G(\rho, M)$ is shaded blue.}\label{fig:surgery}

\end{figure}

\begin{proof}[Proof of \Cref{thm:end}]
As already explained in the paragraph before \Cref{lem:MSA_unimodular}, the wired MSA limit exists.   Also note that the number of components is infinite follows from \Cref{lem:MSA_properties}, part~(c). so we concentrate on proving that every component is a.s.\ one ended.

Recall that $B_G(\rho, a)$ denotes the graph distance ball of radius $a$ in $G$.
Using \Cref{lem:MSA_properties}, we know that each component of $T$ is at most two-ended, and suppose by contradiction that with positive probability there exists a component that is two-ended. By part (c) of \Cref{lem:MSA_properties}, we know that there are infinitely many two-ended components with probability $c>0$. Recall the notation $\mathfrak F(v)$ which denotes the future of $v$ and the notation $u \wedge v$ which denotes the vertex where $\mathfrak F(v)$ and $\mathfrak F(u)$ merge. Iteratively removing the leaves from a two-ended component leaves us with a bi-infinite path, which we call the \textbf{spine} of the component. The idea now is to merge two 2-ended components into a single component with at least $3$ ends by modifying the weights, making sure that the modification yields an absolutely continuous measure, and that the spanning arborescence remains an MSA for the modified weights. This yields a contradiction by \Cref{lem:MSA_unimodular} part (b). The tool we use to do this surgery is \Cref{lem:abs_cont_W'}.

We recommend referring to \Cref{fig:surgery} while reading the following surgery. For $M>0$, let $\cG_M$ be the event: 
\begin{itemize}
\item $B_G(\rho, M)$ intersects at least two infinite components $C$ and $C'$ which are two-ended.
\item There exist components $(C=C_1,C_2,\ldots, C_{k-1},C_k = C')$ such that there exists oriented edges $\vec e_1,\ldots, \vec e_{k-1}$ in $B_G(\rho, M)$ none of which belongs to $T$ and such that $\vec e_i $ has its tail $a_i$ in $C_i$ and head $b_i$ in $C_{i+1}$.
\item $b_i \wedge a_{i+1} \in B_G(\rho, M) $ for all $1 \le i  \le k-2$. 
\item $b_k'$ is in $B_G(\rho, M)$ where $b_k'$ is the vertex where the future of $b_k$ merges with the spine of $C'$. 
\end{itemize}
It is easy to see that if we pick $M$ large enough so that $\P(\cG_M) > c/2$. Indeed, by the almost sure existence of infinitely many two-ended components, the first item is satisfied for large $M$ with probability at least $c/2$. Now take a path $(v_1,\ldots, v_m)$ with  $v_1 \in C$ and $v_m \in C'$ inside $B_G(\rho, M)$. Fix $j_0 = 0$ and iteratively for $i \ge 0$, let $j_{i+1}$ be the largest index bigger than $j_i$ and smaller than $m$ such that $v_{j_i +1}$ and $v_{j_{i+1}}$ is in the same component of $T$. This yields a subsequence $v_1,v_{j_1},v_{j_1+1}, v_{j_{2}}, \ldots, v_{j_{k}+1}, v_{j_{k+1}}=v_m$ such that $v_{j_i+1} $ and $v_{j_{i+1}}$ is in the same component of $T$ for all $0 \le i \le k$. Let $C_i$ be the component of $v_{j_{i}+1}$. This yields the desired sequence $C_1,C_2, \ldots, C_{k+1}$ with edges $\vec e_i$  having tail  $v_{j_{i}}$ and head $ v_{j_i +1}$. Since $T$ is an almost sure limit, we can increase $M$ if needed to make sure the third and fourth items are satisfied as well with $\P(\cG_M) >c/2$.

Now do the following surgery. If we are not on $\cG_M$, do nothing. On the event $\cG_M$,
\begin{itemize}
\item add the edge $\vec f_i$ which is the reversal of $\vec e_i$.
\item remove the outgoing edges in $T$ from $b_i \wedge a_{i+1}$ for $1 \le i \le k-2$ and from $b'_{k}$,
\item reverse the orientation of all the edges in the path in $C_i$ joining $b_i $ and $b_i \wedge a_{i+1}$ for $1 \le i \le k-2$ and also that of the path joining $b_k$ and $b_k'$.
\end{itemize}
First, observe that the surgery above creates an arborescence $T'$ which has at least $3$ ends. Secondly, observe that $T'$ can be obtained from $T$ by performing a composition of operations of the form $\varphi$ from  \Cref{lem:abs_cont_W'} and extracting the second coordinate of its image. Define $W'$ to be the weights obtained from $W$ by extracting the first coordinate of the same operation. \Cref{lem:abs_cont_W'} ensures that $W'$ is absolutely continuous with respect to $W$. But the MSA under $W'$ has a component with at least 3 ends on the event $\cG_M$ which has a positive probability and since $W'$ is absolutely continuous with respect to $W$ and the MSA is a measurable function of the weights, the MSA for $W'$ is absolutely continuous with respect to the MSA of $W$ as well. This is a contradiction since there is almost surely no component of the MSA for weights $W$ with at least 3 ends. 
\end{proof}

\begin{proof}[Proof of \Cref{cor:unimod_tree}]
Recall the definition of $T_{\textsf{core}}$ from the proof of \Cref{thm:LCRW_transient}.
Take $T_{\textsf{core}}$ and iteratively delete each degree 2 vertex and join their neighbors by an edge to obtain a tree $T'$ with degree at least 3. Note further that since $T$ is nonamenable, $T_{\textsf{core}}$ is a bounded subdivision of a tree with degree at least 3, as neighboring vertices in $T'$ can only be joined by a path of uniformly bounded length in $T_{\textsf{core}}$. Thus using \Cref{thm:convergence}, wired MSA limit exists almost surely for $(T, \rho)$. Furthermore, using \Cref{thm:LCRW_transient}, we see that LCRW is a.s. transient as well, and using the equivalence of CLEB walk and LCRW in the i.i.d.\ Exponential $(1)$ setup (\Cref{lem:CLEB_LCRW}), the corollary follows from \Cref{thm:end}.
\end{proof}

We finish with the proof of \Cref{thm:end_GW}.

\begin{proof}[Proof of \Cref{thm:end_GW}.]
The LCRW on $T_{\text{UGW}}$ is a.s.\ transient as is proved in \Cref{thm:transient_GW}. Thus the wired MSA limit exists a.s.\ by \Cref{prop:necessary_cond}. It follows from \cite[Corollary 1.3]{chen_peres} that $T_{\text{UGW}}$ is invariantly nonamenable. Thus applying  \Cref{thm:end}, we conclude the proof for $T_{\text{UGW}}$.

For $T_{\text{GW}}$, consider the tree $T_{\text{AGW}}$ obtained by connecting two independent copies $T_1,T_2$ of $T_{\text{GW}}$ and connecting them by an edge $e$. Let $v_i$ be the endpoint of $e$ at which $T_i$ is attached for $i = 1,2$. Recall that $T_{\text{AGW}}$ is absolutely continuous with respect to $T_{\text{UGW}}$ (the latter is an inverse degree-biased version of the former). Let $\cC_i$ be the event that there are finitely many components for the wired MSA limit in $T_i$ and let $\cE_i$ be the event that there are finitely many components for the wired MSA limit and that there exists a component of the MSA with at least 2 ends in $T_i$. Note that $\cC_1, \cC_2$ are independent and have the same probability. The same also holds for $\cE_1$ and $\cE_2$. Our goal is to prove that the probability of $\cC_i \cup \cE_i$ is 0 for $i=1,2$.

Let $S_{\tau_{N_1},i}$ be the same as $S_{\tau_{N_1}}$ defined in \Cref{sec:CLEB_walk} for the CLEB walk started at $v_i$ in $T_i$. Since CLEB walk is a.s.\ transient in $T_i$, we have $S_{\tau_{N_1},i}$ is finite in size a.s.\ for $i=1,2$. Let $M$ be the sum of the weights of the oriented edges in $S_{\tau_{N_1},1}$ and $S_{\tau_{N_1},2}$. We can choose $m_0$ sufficiently large such that $\P((\cC_1 \cap \cC_2) \cap \{M \le m_0\}) \ge \frac12\P(\cC_1 \cap \cC_2)$. Now let $\cA$ be the event that the weight of each orientation of $e$ is bigger than $2M$ and set $\cC_{\texttt{join}} = (\cC_1 \cap \cC_2) \cap \cA. $ 
$$\P(\cC_{\texttt{join}}) \ge \P(\cA |(\cC_1 \cap \cC_2) \cap \{M \le m_0\} )\P((\cC_1 \cap \cC_2) \cap \{M \le m_0\}) \ge  \frac{C(m_0)}{2}\P(\cC_1 \cap \cC_2) =  \frac{C(m_0)}{2} \P(\cC_1)^2,$$  
for some constant $C(m_0)>0$ since the weight of the orientations of $e$ are independent of the $T_1,T_2$ and their weights.
On the event $\cA$, none of the orientations of $e$ belongs to the MSA of $T_{\text{AGW}}$ and the MSA of $T_{\text{AGW}}$ is just the union of the two MSAs of $T_1$ and $T_2$. Indeed it is easy to see this from the observation that by definition of $M$, CLEB walk started from $v_1$ will never cross the $e$ oriented from $v_1$ to $v_2$ and the same holds for $v_2$, and from the fact that the MSA is a subset of the edges exposed by the CLEB walk algorithm. Therefore, on the event $\cC_{\texttt{join}}$, there are finitely many components of the MSA of $T_{\text{AGW}}$. This event has zero probability since  $T_{\text{AGW}}$ is absolutely continuous with respect to $T_{\text{UGW}}$, and since the MSA on $T_{\text{UGW}}$ has infinitely many components with probability one. Hence, 
$\P(\cC_1) = \P(\cC_2) =0$. Similarly, by defining $\cE_{\texttt{join}} = (\cE_1 \cap \cE_2) \cap \cA, $ we deduce that $\P(\cE_1) = \P(\cE_2) =0$ as well.
We conclude that each $\cC_i \cup \cE_i$ has zero probability to occur, as desired.
\end{proof}

\section{MSA and UST}\label{sec:wilson}
Suppose $G = (V,E)$ be a finite, rooted, oriented multi-graph with no self-loops and we put weights $c(\vec e) = \exp(-\beta U_{\vec e})$ on every oriented edge $\vec e$, where $(U_{\vec e})_{\vec e \in \vec E}$ satisfies \eqref{eq:linear_comb}. Let $\Tu$ denote the \textbf{uniform spanning arborescence} on $G$ with these weights, i.e., for every spanning arborescence $t$ with root $\partial$,
\begin{equation}
\P(\Tu = t ) = \frac{1}{Z} \exp\left(-\beta \sum_{\vec e \in t} U_{\vec e}\right). \label{eq:ust}
\end{equation}
Note here that we are considering the oriented version of the uniform spanning tree, the latter is perhaps more ubiquitous in probability literature, however in keeping with the terminology in this article, we shall call this the Uniform spanning arborescence. The following lemma explains the motivation behind the above model in our context. The proof is clear from the definitions.

\begin{lemma}
As $\beta \to \infty$, $\Tu$ converges in law to the MSA on $G$ with weights $(U_{\vec e})_{\vec e \in E}$.
\end{lemma}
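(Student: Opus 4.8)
The plan is to treat this as a standard zero-temperature (Gibbs measure) concentration argument. Since $G$ is finite, there are only finitely many spanning arborescences of $(G,\partial)$, and to each such arborescence $t$ I attach its total weight $w(t):=\sum_{\vec e\in t}U_{\vec e}$. The genericity hypothesis \eqref{eq:linear_comb} guarantees that distinct arborescences have distinct total weights: if $t_1\neq t_2$ then $w(t_1)-w(t_2)=\sum_{\vec e}n_{\vec e}U_{\vec e}$ for integer coefficients $n_{\vec e}\in\{-1,0,1\}$ not all zero, which is nonzero by \eqref{eq:linear_comb}. Hence $w$ has a unique minimizer $t^\ast$ over the (nonempty, since $G$ is connected) set of spanning arborescences, and by definition $t^\ast$ is exactly the MSA.

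First I would rewrite the law \eqref{eq:ust} by factoring out the minimal weight. Writing $Z=\sum_{s}\exp(-\beta w(s))$, where the sum ranges over all spanning arborescences $s$ of $(G,\partial)$, we have
\[
\P(\Tu=t)=\frac{\exp(-\beta w(t))}{Z}=\frac{\exp\!\big(-\beta(w(t)-w(t^\ast))\big)}{\sum_{s}\exp\!\big(-\beta(w(s)-w(t^\ast))\big)}.
\]
Since $w(s)-w(t^\ast)>0$ for every $s\neq t^\ast$ and the sum in the denominator has finitely many terms, each exponential in the denominator tends to $0$ as $\beta\to\infty$ except the $s=t^\ast$ term, which equals $1$; hence the denominator tends to $1$.

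Then I would conclude term by term. For $t=t^\ast$ the numerator equals $1$, so $\P(\Tu=t^\ast)\to 1$; for every $t\neq t^\ast$ the numerator tends to $0$, so $\P(\Tu=t)\to 0$. Because the MSA $t^\ast$ is a fixed arborescence (the weights are deterministic and generic), this is precisely the statement that $\Tu$ converges in distribution to the point mass at $t^\ast$, i.e.\ to the MSA, as claimed.

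There is essentially no serious obstacle here: the whole statement is a finite $\arg\min$ concentration / Laplace principle, which is why the authors note the proof is clear from the definitions. The only point that genuinely uses the hypotheses is the uniqueness of the minimizer, which is exactly where genericity \eqref{eq:linear_comb} enters; without it the $\beta\to\infty$ limit could be a nontrivial mixture supported on several minimal-weight arborescences. Should one wish to allow random weights $(U_{\vec e})$, the same argument applies verbatim on the full-probability event that the realized weights are generic, and the convergence then holds conditionally on that event.
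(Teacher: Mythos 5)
Your proof is correct and is exactly the argument the authors have in mind when they write that ``the proof is clear from the definitions'': a finite Laplace-type concentration on the unique minimizer, with genericity \eqref{eq:linear_comb} supplying uniqueness of the minimal-weight arborescence. Nothing further is needed.
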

The \textbf{Wilson's algorithm} is a widely popular algorithm for sampling the uniform spanning arborescence. Usually, Wilson's algorithm has found applications in the unoriented setup, but it works in the oriented setup as well, essentially with no change. To describe how to sample a branch of the uniform spanning tree say from a vertex $x$ to $\partial$, we need to describe the \textbf{loop erased random walk (LERW)}. To that end, assume that $(c(\vec e))_{\vec e \in \vec E}$ is a deterministic collection of strictly positive weights.    A simple random walk on this oriented graph is a Markov chain with state space given by the vertices of the graph, with transition probabilities given by 
$$
q(\vec e) = \frac{c(\vec e)}{ \sum_{\vec f \in \cO(\vec e_-) } c(\vec f)} .$$
Note that in our context, the simple random walk is irreducible. The initial state of this chain is called the starting vertex. Note that a simple random walk can be viewed as the sequence of oriented edges $(\vec e_1, \vec e_2,\ldots ) $ which it traverses. We shall call this sequence the \textbf{trajectory} of the simple random walk.

The LERW from $x$ to $\partial$ is sampled as follows. 

\begin{itemize}
\item Start a simple random walk from $x$ and continue until it hits $\partial$.
\item Erase the cycles chronologically. More precisely, having defined a path $Q_i:=(\vec e_0, \ldots, \vec e_i)$ at step $i$, perform a simple random walk step from $(\vec e_i)_+$ to obtain $\vec e_{i+1}$. If $\vec e_{i+1} = (\vec e_j)_+$ for some $j <i$, then define $Q_{i+1}$ to be $(\vec e_0, \ldots, \vec e_j)$ (we erased the loop $(\vec e_{j+1}, \vec e_{j+2}, \ldots, \vec e_i,\vec e_{i+1})$).
\end{itemize}
An edge in the trajectory of a simple random walk (run up to a certain time step) that belongs to at least one loop that is erased is called an \textbf{erased edge}. We emphasize that an edge can be erased multiple times, but when we talk about the \emph{set of all erased edges}, we ignore this multiplicity.

In the CLEB walk, we contract cycles instead of erasing them. Recall from the definition of contraction, that when a cycle is contracted, we remove a certain set of edges, and call the edges that are removed in a contraction step, \textbf{contracted edges}. Note that this set of edges is strictly larger than the edges in the cycle itself: for example, any oriented edge both of whose endpoints are some endpoint of the oriented edges in the cycle contracted, is in this set.
 Unlike erased edges in Wilson's algorithm, an edge is contracted at most once. Also note that once the weights are deterministic, the set of edges contracted is deterministic as well (this is not the case for the edges erased in the USpA). We shall use the notations from \Cref{sec:CLEB_finite} and in particular recall the notations $(V_i,\vec E_i, (U_{i,e})_{e \in \vec E_i})_{i \ge 0}.$

We recall for convenience that for a CLEB walk started at $x$, $S_i \cap \vec E_i$ is an oriented path $P_i$ started at $x$ (before $\partial $ is hit). Denote: $$P_i := (\vec e_{i,1}, \vec e_{i,2}, \ldots, \vec e_{i,k_i}).$$ Recall that every oriented edge in $G_i$ can be naturally identified with an oriented edge in $G_j$ for $j<i$ via inclusion. Let $a_{i,0} = (\vec e_{i,1})_-$ and $a_{i,j} = (\vec e_{i,j})_+$ and recall that all such $a_{i,j}$ is a vertex in $G_i$. The ordered set of cycles which are contracted to create $G_i$ can be partitioned into classes $$((C_1,\ldots, C_{j_0}),(C_{j_0+1}, \ldots, C_{j_1}), \ldots, (C_{j_{m_i-1} +1}, \ldots, C_{j_{m_i}})) $$
where $(C_{j_{k-1} +1} ,\ldots, C_{j_k})$ consists of the cycles contracted into the vertex $a_{i,k}$ for $k \ge 0$ ($j_{-1}=0$) (See  \Cref{fig:S}). If no cycle is contracted to create $a_{i,k}$ (i.e., $a_{i,k} \in V$), then we take its corresponding cycle set to be empty by convention. Let $A_{i,k}$ be the union of the oriented edges in the cycle sets corresponding to $a_{i,k}$ with the convention that $A_{i,k} = \{a_{i,k}\}$ if the cycle set is empty. Recall the notation $V(S)$ which denotes the set of vertices that are endpoints of the oriented edges in $S$. Notice that for each $k$, $V(A_{i,k})$ has a unique vertex in $V$ which is the head of the oriented edge $\vec e_{i,k}$, and another unique vertex in $V$ which is the tail of the oriented edge $\vec e_{i,k+1}$ (here $\vec e_{i,k}, \vec e_{i,k+1}$ are viewed as edges in $G$.) Call these vertices, $\iota(A_{i,k})$ and $o(A_{i,k})$ respectively (for `incoming' and `outgoing' arrows). 

We recall the definition of $N_1$ from \Cref{sec:CLEB_walk}  for the reader's convenience: $N_1$ is the smallest $k$ such that the path $P_i$ is not a singleton for all $i>k$. We say a CLEB walk from $x$ to $\partial $ \textbf{leaves $x$ forever} at step $N_1$. Recall also that two oriented edges are parallel if their head and tail are the same.
\begin{thm}\label{prop:wilson_cleb}
Suppose $(w_{\vec e})_{\vec e \in \vec E}$ denote a collection of weights satisfying \Cref{eq:linear_comb}. Perform a loop erased random walk from $x$ to $\partial $ with weights $c(\vec e) = \exp(-\beta w_{\vec e})$ and  $E^\beta_{\text{LERW}}$ denote the set of all erased edges until the walk leaves $x$ forever. Perform the CLEB walk from $x$ with weights $(c(\vec e))_{\vec e \in \vec E}$. Let $E_{\text{CLEB}}$ denote the set of all edges in the cycles contracted until the CLEB walk leaves $x$ forever. Let $\bar E_{\text{CLEB}}$ denote all the edges that are contracted in the CLEB walk until it leaves $x$ forever. Then 
$$
\lim_{\beta \to \infty}\P(E_{\text{CLEB}} \subseteq E^\beta_{\text{LERW}}  \subseteq  \bar E_{\text{CLEB}})   =1
$$
\end{thm}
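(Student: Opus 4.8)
The plan is to run both processes off the same underlying weights $w$ (with $c(\vec e)=e^{-\beta w_{\vec e}}$) and exploit that, as $\beta\to\infty$, each is governed by the \emph{ordering} of $w$ at every vertex. At a vertex $v$ the loop-erased walk selects its out-edge $\vec e$ with probability $c(\vec e)/\sum_{\vec f\in\cO(v)}c(\vec f)=e^{-\beta w_{\vec e}}/\sum_{\vec f}e^{-\beta w_{\vec f}}$, which tends to $1$ on the minimal-$w$ out-edge and is at most $e^{-\beta\gamma}$ on any other, where $\gamma>0$ is the smallest gap between the minimal and second-minimal out-weight over all vertices (finiteness plus genericity gives $\gamma>0$). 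The CLEB walk takes exactly this minimal-$w$ out-edge in its forward motion. So before any loop/contraction both processes trace the same greedy descent. I would induct on $|V|$, peeling off the first contracted cycle $C_1$ and reducing to the contracted graph $G_1=\Psic((G,\partial),C_1)$.

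First I would show that the first erased loop of the LERW equals the first CLEB cycle $C_1$ with probability $\to 1$: the greedy descent from $x$ revisits a vertex within $|V|$ steps and closes the deterministic cycle $C_1$, and the LERW reproduces this descent verbatim unless it takes a non-minimal step, an event of probability at most $|V|\,D\,e^{-\beta\gamma}$ with $D$ the maximal degree. On this event every edge of $C_1$ is traversed and then erased, so $C_1\subseteq E^\beta_{\text{LERW}}$ (the $C_1$-part of $E_{\text{CLEB}}\subseteq E^\beta_{\text{LERW}}$), and nothing outside $V(C_1)$ has yet been touched.

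The crux is the escape from the blob $A=V(C_1)$. After closing $C_1$ the walk repeatedly re-forms and re-erases it, each lap returning to the same loop-erased endpoint; all these erased edges lie inside $A$, hence in $\bar E_{\text{CLEB}}$. Each visit to a vertex $v\in A$ offers an exit leaving $A$ along $\vec e$ with probability $\asymp e^{-\beta(w_{\vec e}-\pi_v)}$. Writing $\delta^{\ast}=\min\{w_{\vec e}-\pi_{\vec e_-}:\ \vec e_-\in A,\ \vec e_+\notin A\}$, which by genericity is attained only at the CLEB out-edge $\vec g_1$ of $v_{C_1}$, the number of laps until an exit fires is geometric with success probability $\asymp e^{-\beta\delta^{\ast}}$, while any competing exit of excess $\delta'>\delta^{\ast}$ fires per lap with probability $\asymp e^{-\beta\delta'}$. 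Comparing these competing geometric clocks gives $\P(\text{first exit uses an edge other than }\vec g_1)\lesssim e^{-\beta(\delta'-\delta^{\ast})}\to0$, and likewise $\P(\text{any non-}\vec g_1\text{ exit is ever taken before }\vec g_1)\to0$; crucially this is a \emph{rate comparison}, not a union bound, since the good exit is itself exponentially rare. Thus with probability $\to1$ the walk, while associated to $A$, erases only edges internal to $A$ and finally exits through $\vec g_1$. The matching of these exit rates with the contraction weights $w_{\vec e}-\pi_{\vec e_-}$ is exactly what makes the exit law from $A$ agree, to leading exponential order, with a single LERW step from $v_{C_1}$ in $G_1$.

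Given the exit through $\vec g_1$, collapsing $A$ to the point $v_{C_1}$ turns the continuation into a loop-erased walk on $G_1$ with the CLEB-modified weights $w_1(\vec e)=w_{\vec e}-\pi_{\vec e_-}$ (up to $o(1)$ in the exponent, which leaves the ordering intact by genericity). Since $|V(G_1)|<|V|$ and the CLEB walk on $G_1$ from $v_{C_1}$ produces precisely the remaining contracted cycles and the first permanent edge $\vec f_1$ (the $G_1$-exit that realizes ``leaving $x$ forever'' at step $N_1$), the inductive hypothesis yields $E^{(1)}_{\text{CLEB}}\subseteq E^\beta_{\text{LERW},G_1}\subseteq \bar E^{(1)}_{\text{CLEB}}$ with probability $\to1$. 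Lifting back—an erased $G_1$-loop is an erased $G$-loop whose extra edges are internal to $A$, and a contracted $G_1$-edge is a contracted $G$-edge, both inside the final blob $B$—and combining with the $C_1$ analysis (via \Cref{lem:recovery}) gives both inclusions on $G$ before leaving $x$ forever. The hard part will be making Paragraph three and this coupling uniform in $\beta$: one must control the exponentially many ($\asymp e^{\beta\delta^{\ast}}$) circling laps so that errors do not accumulate, for which the right device is the strong Markov property recasting the escape as competing geometric clocks whose ratio vanishes, rather than a per-step union bound (which would fail since the good event is equally rare). A secondary nuisance is that the effective exit weights on $G_1$ are $\beta$-dependent and match $w_1$ only to leading order, so the induction must be carried out with weights whose exponential gaps stay bounded below—again guaranteed by genericity.
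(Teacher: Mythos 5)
Your core mechanism is the right one, and it is the same as the paper's: the escape from a blob is decided by competing exponentially rare clocks, exit through $\vec e$ firing at rate $\asymp e^{-\beta(w_{\vec e}-\pi_{\vec e_-})}$ per lap, so the first exit is through the CLEB edge with probability $1-o_\beta(1)$; this, including your (correct) insistence on a rate comparison rather than a per-step union bound, is precisely the content of the paper's \Cref{claim:induction_loop}. The difference is organizational: the paper inducts on the step number of a single CLEB walk in $G$, proving the escape estimate simultaneously for all blobs $A_{i,j}$ along the current path, while you induct on $|V|$ by contracting the first cycle $C_1$ and recursing on $G_1$. That organization is where your argument has genuine gaps.

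The serious gap is the ``lifting back'' step. Loop erasure does not commute with contraction: if the $G$-walk enters the blob $A=V(C_1)$ at one vertex and exits from another without meeting its own loop-erased trace inside $A$, the collapsed walk closes and erases a loop at $v_{C_1}$ while the $G$-walk erases nothing. So ``an erased $G_1$-loop is an erased $G$-loop whose extra edges are internal to $A$'' is false as a deterministic statement, and the inclusion $E_{\text{CLEB}} \subseteq E^\beta_{\text{LERW}}$ does not follow from the inductive hypothesis on $G_1$; note also that \Cref{lem:recovery}, which you invoke here, concerns the CLEB uncontraction and says nothing about loop erasure. What rescues the lift is that, with high probability, each time the $G$-walk enters $A$ it follows minimal-weight edges and therefore traverses the entire cycle $C_1$ before leaving, hence hits the $G$-vertex where its loop-erased path meets $A$, closing a genuine $G$-loop that contains the $G_1$-loop's edges together with blob-internal edges. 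This traversal statement is exactly the step the paper proves when deducing the theorem from \Cref{claim:induction_loop} (``the simple random walk \dots must trace every edge in $C$ at least once before coming back''), and in your scheme it must be established at every level of the recursion; your proposal never states or proves it. A second, related gap: the induced process on $G_1$ (watch the $G$-walk outside $A$, collapse $A$ to $v_{C_1}$) is not a random walk for any assignment of edge weights --- its exit law from $v_{C_1}$ depends on the entry point into $A$, so for fixed $\beta$ it is not even Markov. Hence your inductive hypothesis, formulated for ``weights whose exponential gaps stay bounded below,'' cannot be applied to it; you would need to strengthen the hypothesis to cover adapted processes whose conditional transition probabilities match exponential-weight walks up to uniform $1+o_\beta(1)$ factors, conditionally on the past. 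Both repairs are available, but carrying them out essentially reconstructs the paper's step-indexed induction, so as written the recursion does not close.
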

The inclusion above can be strict as shown in \Cref{fig:CLEB_wilson_counter}. 
    \begin{figure}[ht]
        \centering
        \includegraphics[scale = 1]{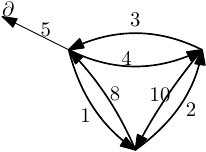}
        \caption{The inclusions in \Cref{prop:wilson_cleb} can be strict. It is not too hard to see in this example that the edge with weight $4$ is in $E^\beta_{\text{LERW}}$ with high probability but is not in $E_{\text{CLEB}}$ and those with weights $8$ and $10$ are not in $E^\beta_{\text{LERW}}$ with high probability.}\label{fig:CLEB_wilson_counter}
    \end{figure}
\begin{proof}
We will use the notations introduced just before the proposition. We will show by induction that if an edge $\vec e$ is added to $S_i$ in the CLEB walk then Wilson's algorithm started from any vertex in $A_{i,k_i}$ will exit through that edge with high probability. Furthermore, if a cycle is created in $G_i$ when this edge is added, then Wilson's algorithm traverses all the edges in this cycle with high probability (in fact it traverses it `infinitely' many times as $\beta$ diverges). It is easy to see that one can deduce the statement in the proposition through these two statements (we prove this below after we record the precise induction statement).

We now state the precise induction hypothesis. Let $\tau_{i,0} < \tau_{i,1}<\ldots <\tau_{i,k_i}$ be the step numbers in the CLEB walk such that at time $\tau_{i,j}$, $a_{i,j}$ is created (i.e., $a_{i,j}$ is in $P_t$ for all $\tau_{i,j}\le  t \le i$). Let $w_{i,j,0}<w_{i,j,1}<\ldots < w_{i,j,n_j}$ be the weights of the outgoing edges from  $a_{i,j}$ in $H_{\tau_{i,j}}$ (recall the notation $H_i$ used to define the sequential CLEB algorithm).  Observe that $w_{i,j,p} - w_{i,j,0}$ is the weight of the outgoing edges from $a_{i,j}$ in $H_{t}$ for $\tau_{i,j}< t \le \tau_{i,k_i}$. Also,  $w_{i,j,0}$ is the weight of $(\vec e_{i,j+1})$ in $G_{\tau_{i,j}}$. Also, the set of outgoing edges from $a_{i,j}$ in $G_{\tau_{i,j}}$, when mapped to $G$, is exactly the set of edges with their tail in $A_{i,j}$ and head outside $A_{i,j}$. Let $\vec e_{i,k_i+1}$ denote the edge with minimal weight out of $a_{i,k_i}$ in $G_{\tau_{i,k_i}}$ (i.e., with weight $w_{i,k_i,0}$). We will  prove by induction that

\begin{claim} \label{claim:induction_loop}For all $i \ge 0$,
For all $0 \le j \le k_i$, the probability that a simple random walk started at any vertex in $V(A_{i,j})$ leaves $V(A_{i,j})$ through the edge with weight $w_{i,j,t}$ is  
$$e^{-\beta (w_{i,j,t} - w_{i,j,0})}) (1+o_\beta(1)), \qquad 0 \le t \le n_{i}.$$
In particular, the above probability for $w_{i,j,0}$ is $1+o_\beta(1)$.
\end{claim}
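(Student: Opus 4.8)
The plan is to establish \Cref{claim:induction_loop} by strong induction on the CLEB-walk step $i$, exploiting the explicit form of the random-walk transition probabilities $q(\vec e) = c(\vec e)/\sum_{\vec f \in \cO(\vec e_-)} c(\vec f)$ with $c(\vec e) = e^{-\beta w_{\vec e}}$. The base case is a set $A_{i,j}$ that is a single (uncontracted) vertex of $V$: here a walk started at that vertex leaves through the outgoing edge $\vec e_t$ on its very first step, and since subtracting a common constant from all outgoing weights of a vertex preserves their pairwise differences, the modified weights satisfy $w_{i,j,t}-w_{i,j,0}=w_{\vec e_t}-w_{\vec e_0}$; dividing numerator and denominator of $q(\vec e_t)$ by $e^{-\beta w_{\vec e_0}}$ gives exactly $e^{-\beta(w_{i,j,t}-w_{i,j,0})}(1+o_\beta(1))$, as the denominator tends to $1$.

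For the inductive step, the only case requiring work is when step $i+1$ contracts a cycle $C=(b_0,\dots,b_{m-1})$ into the new vertex $v_C=a_{i+1,j^\ast}$, since all other sets are unchanged from step $i$ and the extension of the path by a fresh singleton is covered by the base case. Each cycle vertex $b_\ell$ was created at an earlier step, so by the induction hypothesis a walk started anywhere in $V(A_{b_\ell})$ leaves that set through the minimal (cycle-continuing) edge $\vec g_\ell$ with probability $1+o_\beta(1)$, and through any non-cycle outgoing edge $\vec e$ with probability $e^{-\beta\delta_{\vec e}}(1+o_\beta(1))$, where $\delta_{\vec e}$ is the modified weight of $\vec e$ just before the contraction. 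A bookkeeping check (tracking which weights are subtracted at which vertex, and the fact that no outgoing edge of a cycle vertex is altered between the moment $b_\ell$ stops being the path-head and the contraction) identifies the multiset $\{\delta_{\vec e}\}$ of these non-cycle weights with $\{w_{i+1,j^\ast,t}\}_t$ and its minimum with $w_{i+1,j^\ast,0}$; this is the step linking the random-walk quantities to the CLEB weights.

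The core of the argument is a renewal/circulation estimate. I would project the walk onto the finite Markov chain with states $\{b_0,\dots,b_{m-1},\dagger\}$, where $\dagger$ denotes having escaped $V(A_{v_C})$; by the induction hypothesis the walk started in $V(A_{b_\ell})$ moves to the successor set $V(A_{b_{\ell+1}})$ with probability $1+o_\beta(1)$ and otherwise either escapes through $\vec e$ with probability $e^{-\beta\delta_{\vec e}}(1+o_\beta(1))$ or makes a low-probability internal jump to another cycle vertex. Consequently the walk circulates around $C$ a geometric number of times before escaping, and the escape edge is asymptotically chosen proportionally to the per-visit escape rates: solving the (finite) linear system for the hitting probabilities $h_\ell^{\vec e}=\P(\text{escape via }\vec e\mid\text{start at }b_\ell)$ shows that, to leading order, $h_\ell^{\vec e}$ is independent of $\ell$ and equals $e^{-\beta\delta_{\vec e}}/\sum_{\vec f}e^{-\beta\delta_{\vec f}}$. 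Since $\sum_{\vec f}e^{-\beta\delta_{\vec f}}=e^{-\beta w_{i+1,j^\ast,0}}(1+o_\beta(1))$, this is exactly $e^{-\beta(w_{i+1,j^\ast,t}-w_{i+1,j^\ast,0})}(1+o_\beta(1))$, completing the induction.

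I expect the main obstacle to be the quantitative control of the error terms rather than the leading-order heuristic. Two points need care: first, the internal edges from $b_\ell$ to non-successor cycle vertices must be shown to contribute only lower-order corrections (they merely reposition the walk on the cycle, which is harmless precisely because the escape distribution is asymptotically the same from every starting vertex); and second, one must verify that composing the $(1+o_\beta(1))$ factors around the $m$ vertices of the cycle and over the geometric number of loops keeps the total multiplicative error $o_\beta(1)$ uniformly. Because $m$ and the relevant degrees are finite and all off-successor transition probabilities are $O(e^{-\beta c})$ for some $c>0$, these errors are governed by a fixed finite number of comparisons per loop together with a geometric tail, which I would make precise by bounding $h_\ell^{\vec e}$ above and below directly from the explicit solution of the cycle's linear system.
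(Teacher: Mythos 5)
Your proposal is correct and follows essentially the same route as the paper's proof: induction on the CLEB-walk step, a direct transition-probability computation for the base/fresh-vertex cases, and, for the cycle-contraction case, a circulation argument in which the walk loops geometrically many times, chord (internal) edges are absorbed by a second renewal, and the resulting exponent differences are identified with the CLEB-modified weights out of the collapsed vertex. Your ``solve the linear system for the hitting probabilities'' step is exactly the paper's explicit two-stage geometric series (over full loops, then over internal edges in $\cS$), just packaged differently.
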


We first show that the claim proves the theorem. Suppose for $i \ge 1$, $E_\text{CLEB}(i-1)$ be the set of edges in the cycles contracted up to time $\tau_{i-1,k_{i-1}}$ and let $\bar E_{\text{CLEB}(i-1)}$ be the set of edges contracted up to the same time. Let $\cG_{i-1}$ be the event that Wilson's algorithm in $G$ started from $x$ will hit
$A_{i-1,\tau_{i-1,k_{i-1}}}$ and furthermore, the set of vertices visited by Wilson's algorithm until it leaves $A_{i-1,\tau_{i-1,k_{i-1}}}$ for the first time is the same as $\cup_{1 \le z \le k_i}A_{i,z}$ (i.e. the same as those vertices visited by the CLEB walk). Let $t_{i-1}$ be the time at which the simple random walk leaves $A_{i-1,\tau_{i-1,k_{i-1}}}$ for the first time (on $\cG_{i-1}$). Let $E^\beta_\text{LERW}(t_{i-1})$ be the set of edges erased up to time $t_{i-1}$. Assume by induction that $$\P(\cH_{i-1}):= \P(\cG_{i-1} \cap \{E_\text{CLEB}(i-1) \subseteq E^\beta_\text{LERW}(t) \subseteq \bar E_{\text{CLEB}(i-1)}\}) = 1+o_\beta(1).$$ Note the above is trivial for $i=1$. Now we prove the same claim for $P_i$. To that end, we will prove
\begin{equation}
    \P(\cH_i | \cF_{t_{i-1}}\}1_{\cH_{i-1}}\label{eq:ind_LERW1}\\
    =1+o_\beta(1).
\end{equation}
where $\cF_t$ is the $\sigma$-algebra generated by the simple random walk up to time $t$. Note that \eqref{eq:ind_LERW1} is enough for our purposes as the number of steps needed to complete the CLEB walk is bounded, so the error terms cannot blow up. 

Condition on the simple random walk trajectory up to time $t_{i-1}$ and assume it is on $\cH_{i-1}$. Note that by \Cref{claim:induction_loop}, the simple random walk leaves $A_{i-1,\tau_{i-1,k_{i-1}}}$ through the same edge as the CLEB walk with high probability, and that edge is denoted $\vec e_{i-1,k_{i-1}+1}$. If $ (\vec e_{i-1,k_{i-1}+1})_+$ is a vertex never visited before by the CLEB walk, then no edge is contracted in the CLEB walk. Since we are in $\cG_{i-1}$, $ (\vec e_{i-1,k_{i-1}+1})_+$ is never visited before time $t_{i-1}$ by the simple random walk as well, and hence no further loop is erased. Thus in this case, \eqref{eq:ind_LERW1} is trivial.

Suppose now that $ (\vec e_{i-1,k_{i-1}+1})_+$ is a vertex previously visited, then a cycle $$C =  (\vec e_{i-1,j+1}, \ldots \vec e_{i-1,k_{i-1}}, \vec e_{i-1,k_{i-1}+1})$$ for some $j <i$ in $G_i$ is created with edges in $P_i$. Then by \Cref{claim:induction_loop}, the simple random walk from $(\vec e_{i-1,k_{i-1}+1})_+$ in $G$ must trace every edge in $C$ at least once before coming back to $(\vec e_{i-1,k_{i-1}+1})_+$ with probability $1+o_\beta(1)$, thereby creating a loop containing all the edges in $C$, which must be erased.  Furthermore, any other edge erased before leaving $A_{i,k_i}= \cup_{j \le t \le k_{i-1}}A_{i-1,t}$ is contained in $\bar E_{\text{CLEB}(i-1)}$ by definition. By \Cref{claim:induction_loop}, $\cG_i$ also holds with probability $1+o_\beta(1)$. Combining these facts proves \Cref{eq:ind_LERW1} and hence the theorem.

We now prove \Cref{claim:induction_loop}. For brevity, we drop the subscripts $\beta$ in $o_\beta$ below. For $i=0$, the claim is trivial. Indeed, the probability that the simple random walk started at $a_{0,0} = x$ exits through the edge with weight $w_{0,0,t}$ is 
$$
\frac{e^{-\beta w_{0,0,t}}}{e^{-\beta w_{0,0,0}} + o_\beta(e^{-\beta w_{0,0,0}})} = e^{-\beta (w_{0,0,t} - w_{0,0,0})} (1+o(1))
$$
since all weights are distinct by \Cref{eq:linear_comb}.
We now assume that the claim is true for $0 \le t \le i$ and assume that $a_{i,k_i} \neq \partial$ since otherwise we are done. Now assume that the minimal weight edge out of $a_{i,k_i}$ has an endpoint $v$ not in $S_i$. 
Applying the induction hypothesis, we are only left to prove the claim for $a_{i+1,k_{i+1}} = v$. This is similar to the $i=0$ case and we skip the details here.

We now assume that the minimal weight edge out of $a_{i,k_i}$ has endpoint $a_{i,j}$ for some $1 \le j \le k_i$ to create a cycle $C :=(\vec e_{i,j+1}, \ldots \vec e_{i,k_{i}}, \vec e_{i,k_{i}+1})$ in $G_i$.  Thus $a_{i+1,t} = a_{i,t}$ for $1 \le t \le j-1$ and $C$ is contracted to create a new vertex $a_{i+1,k_{i+1}}$. 
We now prove the claim for $A_{i+1,k_{i+1}}$ and this is the hardest case. Observe that $A_{i+1,k_{i+1}}$ in $G$ can be seen as a disjoint union of edge sets in $A_{i,j}, A_{i,j+1}, \ldots, A_{i,k_i}$ and those in $C$. Call these sets of edges $\cE$. Let $1-\ve_{i,t}$ denote the probability that a simple random walk started from $\iota(A_{i,t})$ leaves $A_{i,t}$ through $ \vec e_{i,t+1}$. By the induction hypothesis, the simple random  walk started from some vertex in $A_{i,t}$ leaves $\cE$ through the edge with weight $w_{i,t',p}$ is 
\begin{equation}
q(i,t,t',p):=C(t,t')\sum_{n \ge 0}\left(\prod_{j \le s \le k_i } (1-\ve_{i,s}) \right)^n e^{-\beta (w_{i,t',p} - w_{i,t',0})}(1+o(1))
\end{equation}
where $C(t,t') = 1+o(1)$ is the probability that the always walk escapes $A_{i,s}$ through $ \vec e_{i,s+1}$ until it reaches $A_{i,t'}$.
By induction hypothesis, \begin{equation*}\ve_{i,s} = \sum_{p=1}^{n_s} e^{-\beta(w_{i,s,p}  - w_{i,s,0})} (1+o(1))\end{equation*}

Hence
\begin{equation*}
\sum_{j \le s \le k_i}\ve_{i,s}  =\sum_{j \le s \le k_i}  \sum_{p=1}^{n_s} e^{-\beta(w_{i,s,p}  - w_{i,s,0})} (1+o(1)) 
\end{equation*}

Thus
\begin{equation*}
q(i,t,t',p)=\frac{e^{-\beta ((w_{i,t',p} - w_{i,t',0})}}{\sum_{j \le s \le k_i}  \sum_{p=1}^{n_s} e^{-\beta(w_{i,s,p}  - w_{i,s,0})} } (1+o(1)) =:q'(i,t',p)
\end{equation*}
Let $\cS$ be the set of all pairs $(t',p)$ which correspond to the edges with both endpoints in $\cup_{j\le s \le k_i}V(A_{i,s})$. Let $\cO$ be the set of index pairs for edges coming out of a vertex in $\cup_{j \le s \le k_i} V(A_{i,s})$ which do not belong in $\cS$. The probability that the walk escapes $\cE \cup \cS$ through an edge with weight $w_{i,t',p}$ is
\begin{equation*}
\sum_{n \ge 0}(q'(i,\cS))^nq'(i,t',p) = \frac{e^{-\beta ((w_{i,t',p} - w_{i,t',0}) }}{\sum_{(t',p) \in \cO}e^{-\beta ((w_{i,t',p} - w_{i,t',0}) )}} (1+o(1))
\end{equation*}
where $q'(i,\cS) = \sum_{x \in \cS} q'(i,x)$.
Let $w_{i,s^*,p^*} - w_{i,s^*,0}$ achieves the minimum of $\{w_{i,s,p}  - w_{i,s,0} : (s,p) \in \cO\}$. The above probability can be written as
\begin{equation*}
e^{-\beta ((w_{i,t',p} - w_{i,t',0}) - (w_{i,s^*,p^*} - w_{i,s^*,0}) }  (1+o(1)).
\end{equation*}
Now observe that the weights of the edges coming out of the collapsed vertex $a_{i+1,k_{i+1}}$ (after modification by the CLEB algorithm) are exactly  $(w_{i,t',p} - w_{i,t',0}) - (w_{i,s^*,p^*} - w_{i,s^*,0})$ for $(t',p) \in \cO$. This completes the proof of \Cref{claim:induction_loop}.
\end{proof}

\section{Open questions and simulation}\label{sec:open}

We finish with some open questions. Of course, studying loop contracting random walk is an interesting question in its own right. 
\begin{conjecture} \label{qn:LCRW_transient}
The loop contracting random walk on a transient, vertex-transitive graph is transient at all its vertices with probability one. 
\end{conjecture}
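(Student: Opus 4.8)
The plan is to follow the blueprint of the tree case, \Cref{thm:LCRW_transient}. By \Cref{lem:transience_equiv} it suffices to show that, started from any vertex $x$, the path $P_i$ collapses to a singleton only finitely often almost surely, and for this it is enough to exhibit a constant $\alpha>0$ with the property that whenever the path is a singleton sitting at a (possibly contracted) vertex $w$ of the current graph $G_i$, the conditional probability that the walk reaches the boundary of a finite exhausting set before the path collapses again is at least $\alpha$, uniformly over the history. Given such a bound the number of collapses is stochastically dominated by a geometric random variable and transience follows verbatim as at the end of the proof of \Cref{thm:LCRW_transient}.

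The engine driving any such escape estimate is the monotonicity of effective resistance: contracting a cycle shorts its vertices together, so by Rayleigh monotonicity it can only decrease the effective resistance to the target while leaving the degree of the observed vertex unchanged (this is \Cref{lem:contraction_resistance}), and this translates into an increase of the escape probability, which is precisely how \Cref{prop:hitting_prop} compares the loop-contracting walk with the simple random walk. The first task is therefore to upgrade \Cref{prop:hitting_prop} from trees to graphs carrying cycles. The tree proof exploits that deleting a vertex splits the tree into independently explored subtrees; on a general graph I would replace this by an excursion decomposition of the walk from $w$, noting that each excursion either escapes or returns to $w$ having contracted some cycles, and that by resistance monotonicity the environment seen after every return has escape probability no smaller than before. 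Running this as an induction on a finite truncation $G_n^{\w}$, applying the resistance inequality at each return, should yield $\P(\tau^{Y}_\partial<\tau^{Y}_{w,+})\ge\P(\tau^{X}_\partial<\tau^{X}_{w,+})$ on any finite graph.

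The genuine obstacle, and the reason the tree argument does not transfer, is the \emph{uniform} lower bound on the simple-random-walk escape probability from the contracted singleton. When the path collapses, the vertex $w$ represents a connected set $W\subset V$ of previously visited vertices, and the relevant quantity is the ratio $C_{\mathrm{eff}}(W\leftrightarrow\infty)/|\partial_E W|$ of the capacity of $W$ to the number of edges leaving it. For a bounded subdivision of a minimum-degree-$3$ tree this ratio is bounded below, which is exactly \Cref{lem:hitting}; but on an amenable transient graph such as $\Z^3$ a blob $W$ equal to a ball of radius $r$ has $C_{\mathrm{eff}}(W\leftrightarrow\infty)\asymp r$ while $|\partial_E W|\asymp r^2$, so the escape probability degrades like $1/r$. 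Hence no history-independent $\alpha>0$ can exist in general, and one must instead control the competition between the growth of the capacity of the explored-and-contracted region and the growth of its boundary as the walk repeatedly loops. I expect this capacity-versus-boundary balance to be the crux of the conjecture.

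To handle this balance I would abandon the singleton-escape reduction in favour of a quantity that is genuinely monotone under the dynamics, namely the effective conductance $C_{\mathrm{eff}}(W_i\leftrightarrow\infty)$ of the region $W_i$ contracted so far, which only grows as edges are shorted, and attempt to show either that it diverges or that the length of $P_i$ carries a positive-drift Lyapunov function powered by transience of the ambient walk. This is where vertex-transitivity should finally be used: at least in the unimodular case (for instance Cayley graphs), a mass-transport argument in the spirit of \Cref{lem:MSA_unimodular} may allow one to replace the pointwise uniform escape bound by a bound in expectation over a stationary version of the environment seen from the tip of $P_i$, which is the ingredient that is unavailable once transitivity is lost; the non-unimodular transitive case would presumably require a separate argument. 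Constructing such a stationary environment and verifying the escape-in-expectation property is, in my view, the heart of the difficulty.
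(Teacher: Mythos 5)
This statement is \Cref{qn:LCRW_transient} of the paper, i.e.\ an open conjecture: the paper contains no proof of it, only the partial results \Cref{thm:LCRW_transient,thm:transient_GW} for trees. Your submission is likewise not a proof but a plan of attack, and its decisive steps are unresolved (by your own admission). Concretely, there are two gaps. First, the extension of \Cref{prop:hitting_prop} from trees to general graphs is only asserted (``should yield''): the paper's induction uses crucially that removing a vertex from a tree splits it into components $T_u$ which the walk then explores afresh, and your proposed excursion decomposition has no analogue of that independence on a graph with cycles. \Cref{lem:contraction_resistance} compares two \emph{fixed} graphs; it does not by itself control the dynamically changing environment of the LCRW, so this step is genuinely open rather than routine. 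Second, and fatally for the strategy announced in your first paragraph, you yourself show that strategy cannot work: the conjecture covers transient \emph{amenable} vertex-transitive graphs such as $\Z^3$, where the escape probability from a contracted blob of radius $r$ decays like $1/r$, so no history-uniform $\alpha>0$ exists and the geometric-domination argument that closes the tree case (via \Cref{lem:hitting}, which is really a nonamenability statement about bounded subdivisions of minimum-degree-$3$ trees) is unavailable.

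The fallback you then sketch --- monotonicity of the effective conductance of the contracted region, a Lyapunov function for the length of $P_i$, and a mass-transport construction of a stationary environment seen from the tip of $P_i$ in the unimodular transitive case --- is stated entirely in the conditional (``may allow'', ``would presumably require'') and contains no estimate: you neither construct the stationary environment nor verify any escape-in-expectation property for it, and you explicitly set aside the non-unimodular transitive case. What you have written is a reasonable research program that correctly identifies the crux (the competition between the capacity of the contracted region and the growth of its boundary), and in that sense it is consistent with the paper's own assessment in leaving the statement as a conjecture; but it is not a proof, and the statement remains open.
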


\begin{figure}[ht]
\centering
\includegraphics[width = 5.8cm, height = 5.8cm]{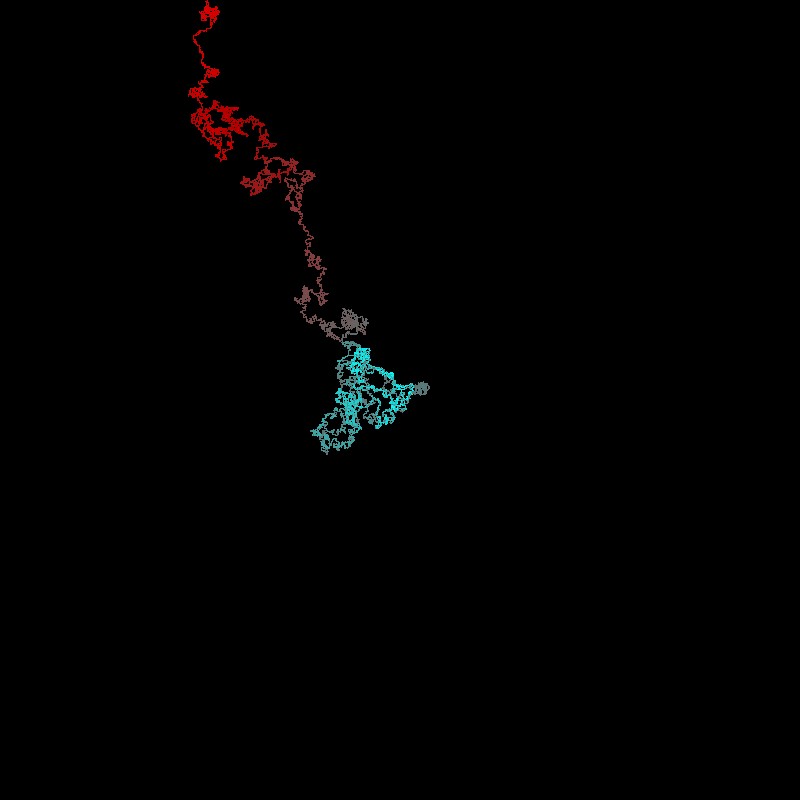}
\includegraphics[width = 5.8cm, height = 5.8cm]{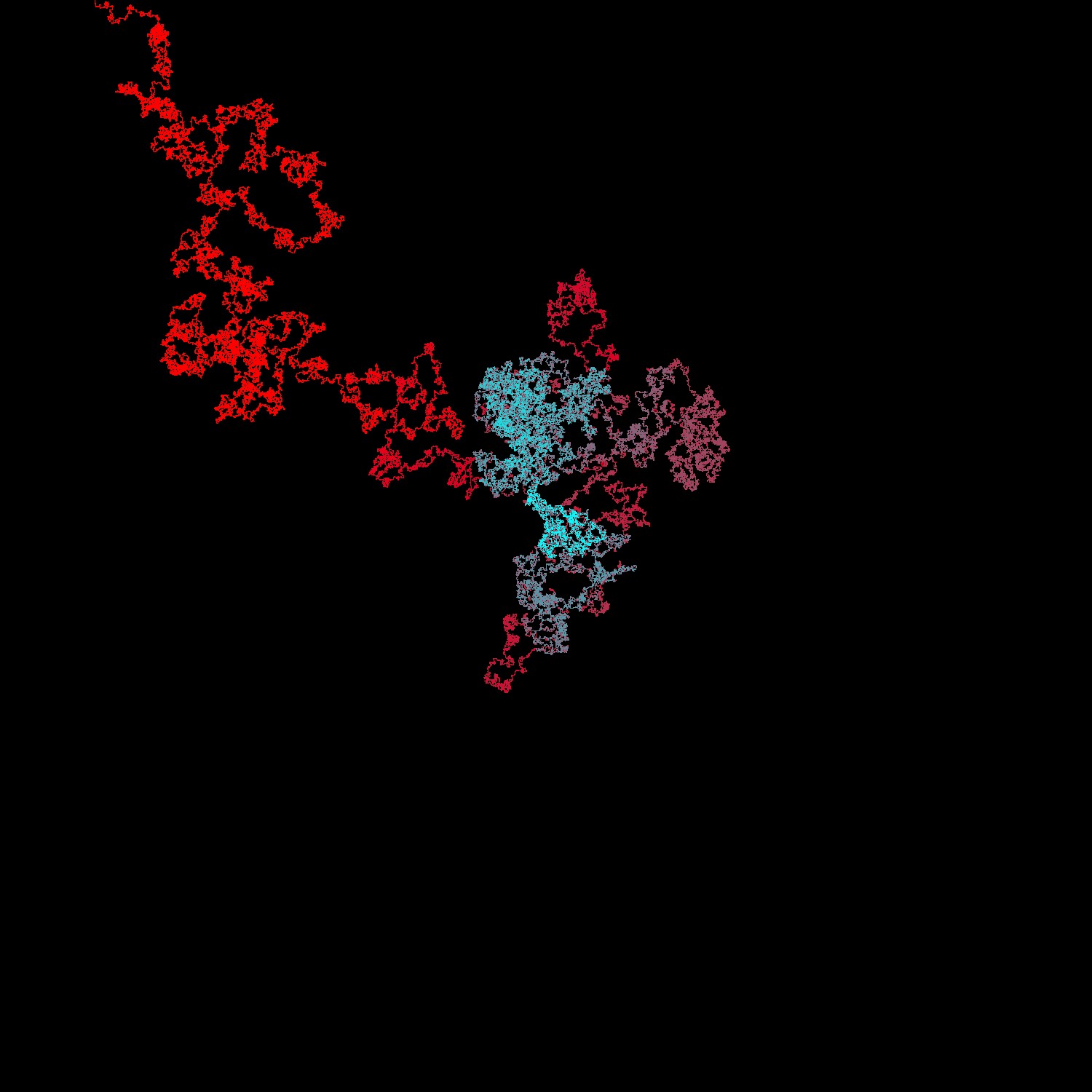}\\
\includegraphics[width = 6.5cm, height = 5cm]{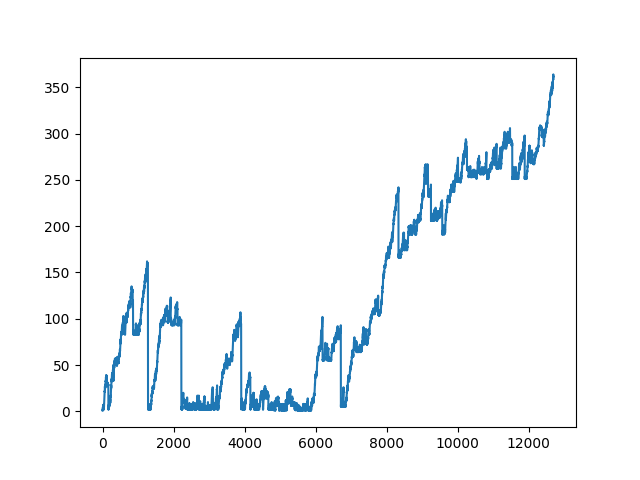}
\includegraphics[width = 6.5cm, height = 4.85cm]{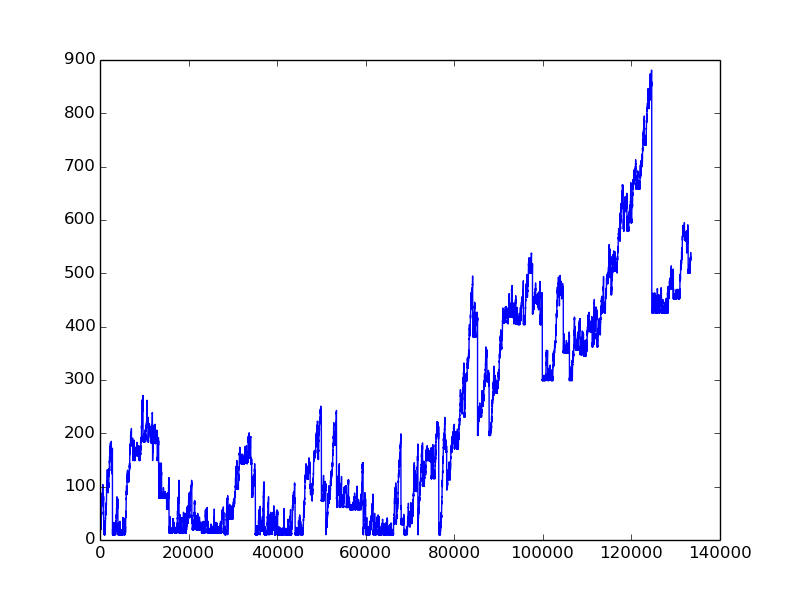}
\caption{The top panel shows simulations of LCRW on an 800 by 800 grid (left) and 1500 by 1500 grid (right) of $\Z^2$. The loops are not contracted and presented as a subset of $\Z^2$ (drawing $S_i$ from \Cref{sec:CLEB_finite}). The colors indicate the step number of the process, changing from turquoise to red. The bottom panel plots for the length of the contracted path $P_i$ for 800 by 800 grid (left) and 1500 by 1500 grid (right).}
\label{simulation_800}
\end{figure}

Also, analogues of \Cref{thm:convergence} is interesting for other distribution of weights:
\begin{conjecture}
 \Cref{thm:convergence} is true for all transient, vertex-transitive graphs with weight distributions that are continuous and supported on subsets of $[0,\infty)$. A particularly interesting case is Uniform $[0,1]$.
\end{conjecture}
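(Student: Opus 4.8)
The plan is to proceed through the same reduction used for \Cref{thm:convergence}: by \Cref{prop:necessary_cond} it suffices to prove that the CLEB walk is transient almost surely from every vertex, and by vertex-transitivity one only needs this from a single vertex $\rho$ (modulo the absence of an obvious zero-one law for CLEB-walk transience, which I would circumvent by proving a \emph{quantitative} escape estimate rather than a qualitative dichotomy). There are two independent generalizations to carry out relative to \Cref{thm:convergence}: replacing bounded subdivisions of trees by an arbitrary transient vertex-transitive graph, and replacing $\mathrm{Exp}(1)$ weights by an arbitrary continuous law on $[0,\infty)$, such as $\mathrm{Unif}[0,1]$. The backbone of the argument would remain the resistance comparison \Cref{lem:contraction_resistance}: contracting cycles never increases effective resistance, hence never decreases the simple-random-walk escape probability, and this is purely graph-theoretic and so survives both generalizations. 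On a transient vertex-transitive graph one has $R^{G}_{\text{eff}}(\rho\leftrightarrow\infty)<\infty$ with the same value at every vertex, so the simple random walk escape probability from any vertex is bounded below by a universal constant $\alpha_0>0$; this is the transitive-graph replacement for \Cref{lem:hitting}.

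The key point, and the one most likely to transfer verbatim from the present paper, is that the \emph{first} step of the CLEB walk out of any freshly visited vertex is uniform among its outgoing edges, \emph{for every} continuous weight law, simply by exchangeability of i.i.d.\ weights. Consequently, up until the first time a cycle is formed, the CLEB walk is literally a simple random walk, independently of the distribution; the distribution only enters once contractions occur, through the residual (discounted) weights attached to a contracted super-vertex. For this reason I would try to prove the distribution-robust escape bound $\P(\tau^{\mathrm{CLEB},+}_{\rho}=\infty)\ge\alpha_0$ by the same inductive excursion decomposition as in \Cref{prop:hitting_prop}, replacing the single use of the memoryless property by a stochastic-domination input. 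For a bounded law such as $\mathrm{Unif}[0,1]$ the residual weights after a contraction are spacings, hence stochastically no larger than a fresh copy of the law; quantifying the resulting comparison between the (biased) step distribution at a contracted vertex and the uniform one, and feeding it into the recursion \eqref{eq:recursion_Y}, is the substitute for \Cref{lem:exponential_memoryless,lem:CLEB_LCRW}.

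Granting such an escape bound, transience would follow exactly as in the proof of \Cref{thm:LCRW_transient}: each time the contracted path $P_i$ returns to a singleton, the current graph is still a contraction of the original transient vertex-transitive graph, so by \Cref{lem:contraction_resistance} the escape probability is again at least $\alpha_0$, giving a geometric bound $(1-\alpha_0)^k$ on the number of returns and hence $P_i\to\infty$ almost surely. One then invokes \Cref{prop:necessary_cond} to conclude existence of the wired MSA limit.

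The main obstacle is precisely the step where the tree hypothesis was essential: on a tree every contracted cycle is an immediate backtrack, so a contraction merges only two vertices at a time and the graph after contraction is again a (subdivided) tree, which is what lets the induction in \Cref{prop:hitting_prop} close. On a general vertex-transitive graph cycles can be long and a single contraction merges an arbitrarily large blob, so the class of graphs encountered is no longer stable under a simple induction, and the residual-weight bookkeeping at a super-vertex couples many edges at once. I expect that controlling the joint law of the discounted weights on a large contracted blob — and showing it does not conspire to trap the walker — is where the real analytic work lies. It is plausible that this requires first settling the $\mathrm{Exp}(1)$ case via \Cref{qn:LCRW_transient} by a separate transitivity-and-speed argument, and only then transferring to general continuous laws; note that a naive transfer by absolute continuity is unavailable, since, as shown in \Cref{sec:basic}, the law of the MSA genuinely depends on the weight distribution, so the comparison must be carried out at the level of the transience event itself.
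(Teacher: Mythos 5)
First, be aware of what you were asked to prove: this statement is one of the open problems in \Cref{sec:open}, so the paper contains no proof of it, and your text is accordingly (and honestly) a research program rather than a proof. Your reduction is the right one — by \Cref{prop:necessary_cond} it suffices to show a.s.\ transience of the CLEB walk, and your observation that the first exposed edge at a freshly visited vertex is uniform by exchangeability of i.i.d.\ weights is correct (but only before any contraction has touched that vertex). The two steps you leave open are exactly the ones that make this a conjecture, and one of your bridging claims is actually wrong as stated. You assert that each time $P_i$ returns to a singleton, ``the current graph is still a contraction of the original transient vertex-transitive graph, so by \Cref{lem:contraction_resistance} the escape probability is again at least $\alpha_0$.'' That lemma does not give this: it compares hitting probabilities $\P(\tau_v<\tau_{u,+})$ only when the contracted cycle avoids both $u$ and $v$, and its proof uses crucially that the degree of the starting vertex is unchanged. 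Here the walker sits \emph{at} the contracted super-vertex, whose degree can be enormous. Quantitatively, the SRW escape probability from a contracted set $S$ is the effective conductance $C(S\leftrightarrow\infty)$ divided by the number of boundary edges of $S$; in $\Z^3$, a blob shaped like a ball of radius $r$ has $C\asymp r$ but $\asymp r^2$ boundary edges, so its escape probability decays like $1/r$, and no uniform $\alpha_0$ over contractions exists. The tree proof of \Cref{thm:LCRW_transient} evades this precisely because $M$-bounded subdivisions of min-degree-$3$ trees are stable under the contractions the LCRW performs, so the class-wide infimum $\alpha(M)$ of \Cref{lem:hitting} applies to the post-contraction graph and the post-contraction vertex; you correctly identify in your last paragraph that this class stability fails in general, but that concession contradicts the geometric-return bound you rely on one paragraph earlier.

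The second gap is the distribution-robustness step. \Cref{lem:exponential_memoryless} is what makes the CLEB walk with $\mathrm{Exp}(1)$ weights a bona fide uniform-step walk (\Cref{lem:CLEB_LCRW}); for $\mathrm{Unif}[0,1]$ the residual weights at a super-vertex are neither i.i.d.\ nor exchangeable, since distinct constituent vertices of the blob carry distinct accumulated discounts. Your proposed substitute — residuals are ``stochastically no larger than a fresh copy'' — is true coordinatewise but does not yield an inequality of the right sign in the recursion \eqref{eq:recursion_Y}: stochastically smaller residuals on the blob's outgoing edges bias the next exposed edge \emph{toward} previously discounted directions, i.e.\ potentially back toward the past of the walk, and no domination or coupling argument is stated that controls this bias, let alone uniformly over the blobs that arise. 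Your closing remark is correct and worth keeping: since i.i.d.\ $\mathrm{Unif}[0,1]$ and $\mathrm{Exp}(1)$ configurations on infinitely many edges are mutually singular, and \Cref{sec:basic} shows the MSA law genuinely depends on the weight law, no absolute-continuity transfer from the exponential case (even granting \Cref{qn:LCRW_transient}) is available. So the proposal correctly maps the terrain but proves neither of the two new ingredients it needs; the statement remains open, as the paper asserts.
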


We also add a more general question about the transience of the CLEB walk:
\begin{question}\label{qn:0-1}
Is there a 0-1 law for transience and recurrence for the CLEB walk, analogous to Markov chains?
\end{question} 
Loop contracting random walk on $\Z^2$ is in itself an interesting question. We present some simulations in the square lattice in Figure~\ref{simulation_800}. Simulations do indicate that \Cref{qn:LCRW_transient} is true in $\Z^2$ as well.

\bibliographystyle{amsplain}
\bibliography{MSA}

\end{document}